\newtheorem{theorem}{Theorem}
\newtheorem{proposition}[theorem]{Proposition}
\newtheorem{lemma}[theorem]{Lemma}
\newtheorem{definition}[theorem]{Definition}
\newtheorem*{theorem*}{Theorem}
\theoremstyle{definition}
\def\XXint#1#2#3{{\setbox0=\hbox{$#1{#2#3}{\int}$ }
\vcenter{\hbox{$#2#3$ }}\kern-.6\wd0}}
\definecolor{Yellow}{rgb}{0.95,0.9,0.0} 
\definecolor{Red}{rgb}{0.8,0.1,0.1}
\definecolor{Green}{rgb}{0.1,0.65,0.2}
\definecolor{Blue}{rgb}{0.1,0.1,0.8}
\definecolor{Purple}{rgb}{0.7,0.1,0.7}
\definecolor{Grey}{rgb}{0.6,0.6,0.6}
\definecolor{YELLOW}{rgb}{0.95,0.9,0.0} 
\definecolor{RED}{rgb}{0.8,0.1,0.1}
\definecolor{GREEN}{rgb}{0.25,0.65,0.1}
\definecolor{BLUE}{rgb}{0.1,0.1,0.8}
\definecolor{PURPLE}{rgb}{0.7,0.1,0.7}
\newcommand{\supp}{\operatorname{supp}}
\newcommand{\dist}{\operatorname{dist}}
\newcommand{\Id}{\operatorname{Id}}
\newcommand{\Rd}[1][d]{{\mathbb{R}^{#1}}}
\newcommand{\dH}{\,\mathrm{d}\mathcal{H}^{d-1}}
\newcommand{\dS}{\,\mathrm{d}S}
\newcommand{\dx}{\,\mathrm{d}x}
\newcommand{\dnablachii}{\,\mathrm{d}|\nabla \chi_i|}
\newcommand{\eps}{\varepsilon}
\renewcommand{\vec}[1]{{\operatorname{#1}}}
\newcommand{\mres}{\mathbin{\vrule height 1.6ex depth 0pt width 0.13ex\vrule height 0.13ex depth 0pt width 1.3ex}}
\newcommand{\cupdot}{\mathbin{\mathaccent\cdot\cup}}
\def\onedot{$\mathsurround0pt\ldotp$}
\def\cdddot#1{
  \mathbin{\vcenter{\baselineskip.67ex
    \hbox{\onedot}\hbox{\onedot}\hbox{\onedot}%
  }}%
}
\begin{document}

\title[Local minimizers of the interface length functional]
{Local minimizers of the interface length functional based on a
concept of local paired calibrations}

\author{Julian Fischer}
\address{Institute of Science and Technology Austria (IST Austria), Am~Campus~1, 
3400 Klosterneuburg, Austria}
\email{julian.fischer@ist.ac.at}

\author{Sebastian Hensel}
\address{Hausdorff Center for Mathematics, Universit{\"a}t Bonn, Endenicher Allee 62, 53115 Bonn, Germany}
\email{sebastian.hensel@hcm.uni-bonn.de}

\author{Tim Laux}
\address{Hausdorff Center for Mathematics, Universit{\"a}t Bonn, Endenicher Alllee 62, 53115 Bonn, Germany}
\email{tim.laux@hcm.uni-bonn.de}

\author{Theresa M.\ Simon}
\address{Westf{\"a}lische Wilhelms-Universit{\"a}t M{\"u}nster, Orl{\'e}ansring 10, 48149 M{\"u}nster, Germany}
\email{theresa.simon@uni-muenster.de}
%

\begin{abstract}
We establish that regular flat partitions are locally minimizing for the
interface energy with respect to $L^1$ perturbations of the phases. 
Regular flat partitions are partitions of open sets in $\Rd[2]$ whose network of interfaces consists of 
finitely many straight segments with a singular set made up of finitely many triple junctions 
at which the \textit{Herring angle condition} is satisfied. 
This result not only holds
for the case of the perimeter functional but for a general class of surface tension matrices.
Our proof relies on a localized version of the paired calibration method which was introduced by
Lawlor and Morgan (Pac.\ J.\ Appl.\ Math., 166(1), 1994) in conjunction with a relative energy functional that precisely captures the suboptimality of classical calibration estimates.
Vice versa, we show that any stationary point of the length functional (in a sense of metric spaces) has to be a regular flat partition.
\end{abstract}

\keywords{Optimal partitions, Steiner trees, calibrated geometry}

\maketitle
\tableofcontents

\section{Introduction}

The concept of calibrations is arguably the most elegant tool for proving that a given surface minimizes the area functional.
In this paper, we propose a new concept of \emph{local} calibrations which allows us to verify the \emph{local} minimality of a given configuration.

\medskip

The field of calibrated geometry can be traced back to the work of Wirtinger~\cite{Wirtinger36} and de Rham~\cite{deRham57} on complex geometry and has ties to classical field theory in the calculus of variations developped by Weierstra{\ss}, Kneser, Zermelo, Hilber, Mayer, and Charat{\'e}odory, 
see for example \cite[Chapter 6]{giaquinta1}.
Federer~\cite{Federer65} extended this framework to singular geometries and proved in particular that any complex variety minimizes the area functional. 
It were Harvey and Lawson who coined the term ``calibration'' in their treatise~\cite{HarveyLawson81}. 
Ever since, calibrated geometry has been a vibrant field of study. 
Crucial to our present work is the fundamental contribution of Lawlor and Morgan~\cite{Lawlor-Morgan} which introduces the concept of paired calibrations. 
In particular, this concept allows to show that the cone over the tetrahedron is a global minimizer of the total interfacial area among all ``soap films'' separating each face of the tetrahedron from the others. 
Many other surfaces were shown to be global minimizers of the area functional using calibrations. 
For example, De~Philippis and Paolini~\cite{DePhilippisPaolini} calibrated the Simons cone, which provides a simpler alternative to the original proof by Bombieri, De~Giorgi, and Giusti~\cite{Bombieri}.

\medskip

There are several further extensions of calibrations.
Brakke~\cite{Brakke95} introduced calibrations in covering spaces, which allows to find minimizers in different topology classes.
Morgan~\cite{morgan2005clusters} works with calibrations of currents with coefficients in a group, which allows to globally calibrate certain Steiner trees, see also the work of Marchese and Massaccesi~\cite{MarcheseMassaccesi14}, Pluda and Carioni~\cite{carioni2020carioni,carioni2021different}, and Pluda and Pozzetta~\cite{pluda2022minimizing}.
Furthermore, calibrations can be used for free discontinuity problems such as the minimization of the Mumford--Shah functional, see~\cite{AlbertiBouchitteDalMaso}.

\medskip

The contributions of the present work are the following. 
First, we propose a new concept of \emph{local} paired calibrations in general dimensions, which is a general tool to prove \emph{local} minimality of a given configuration of surface clusters.
Here, local is meant in the natural $L^1$ topology for the enclosed volumes. 
Second, we use this new notion to show that in a given planar domain, any regular flat partition minimizes the total length functional subject to Dirichlet boundary conditions in an $L^1$-neighborhood, see Theorem~\ref{MainResult}.
Finally, vice versa, we show that any local minimizer of the total length functional is a regular flat partition in a subdomain, see Theorem~\ref{thm:stationary}.
The combination of the two results gives a complete characterization of local minimizers of the interface length functional.

\medskip

One of the main technical challenges in proving such results is the complicated non-convex energy landscape of this basic problem.
More precisely, one can easily see that local minimizers are in general not isolated.
For example, our first main result shows that all networks depicted in Figure~\ref{fig:hexagon} are local minimizers of the total length functional. 
But clearly they are part of a one-parameter family with constant total length so that these local minima are degenerate---at least in one direction in state space.
It is this degeneracy of the energy landscape that causes one of the main technical problems: for competitors in this family, our estimates need to be exact.

\medskip

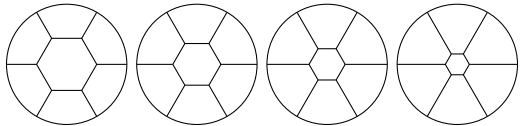
\begin{figure}
	\begin{tikzpicture}[scale=0.4]
		  			\draw (0,0) circle (2);
		  			\foreach \r/\c in {1/black}
			  			{
			  			\draw[\c] (0:\r) \foreach \x in {60,120,...,360} {  -- (\x:\r) };
			  			\foreach \x in {60,120,...,360}
			  				\draw[\c] {  (\x:\r) -- (\x:2) };
			  			}
		  		\end{tikzpicture}
		  		\begin{tikzpicture}[scale=0.4]
		  			\draw (0,0) circle (2);
		  			\foreach \r/\c in {0.8/black}
			  			{
			  			\draw[\c] (0:\r) \foreach \x in {60,120,...,360} {  -- (\x:\r) };
			  			\foreach \x in {60,120,...,360}
			  				\draw[\c] {  (\x:\r) -- (\x:2) };
			  			}
		  		\end{tikzpicture}
		  		\begin{tikzpicture}[scale=0.4]
		  			\draw (0,0) circle (2);
		  			\foreach \r/\c in {0.6/black}
			  			{
			  			\draw[\c] (0:\r) \foreach \x in {60,120,...,360} {  -- (\x:\r) };
			  			\foreach \x in {60,120,...,360}
			  				\draw[\c] {  (\x:\r) -- (\x:2) };
			  			}
		  		\end{tikzpicture}
				\begin{tikzpicture}[scale=0.4]
			  			\draw (0,0) circle (2);
			  			\foreach \r/\c in {0.4/black}
				  			{
				  			\draw[\c] (0:\r) \foreach \x in {60,120,...,360} {  -- (\x:\r) };
				  			\foreach \x in {60,120,...,360}
				  				\draw[\c] {  (\x:\r) -- (\x:2) };
				  			}
		  		\end{tikzpicture}
		  		\caption{All these Steiner trees have the same total length. Theorem~\ref{MainResult} shows that each of these configurations is a local minimizer in some $L^1$-neighborhood.}
	  			\label{fig:hexagon}
		  	\end{figure}

The classical use of calibrations is the following one-line proof of global minimality. 
Let $\Sigma$ be a $d$-dimensional oriented submanifold (say, of some Euclidean space) with boundary $\Gamma$. 
We call $\Sigma $ calibrated if there exists a globally defined closed $d$-form $\omega$ such that $\omega = \textup{Vol}_d$ on $\Sigma$ and $\omega \leq \textup{Vol}_d$ on any $d$-plane. 
Then, for any other $d$-dimensional surface  $\widetilde \Sigma$ spanning the same boundary $\Gamma$ it holds
\begin{align*}
	\textup{Vol}_d(\Sigma) 
	= \int_\Sigma \omega 
	= \int_{\widetilde \Sigma} \omega 
	\leq \textup{Vol}_d(\widetilde \Sigma).
\end{align*}	

\medskip

A crucial obstacle in lifting the above classical proof of \emph{global} minimality to the \emph{local} case in the natural topology is that one has to carefully analyze the excess term~$E[\tilde\Sigma,\omega]=\int_{\tilde \Sigma} (\textup{Vol}_d-\omega)$, which in all previous works is trivially estimated from below by $0$.
However, for general stationary points, such a globally closed $d$-form does not exist and one has to allow the exterior differential $\mathrm{d}\omega$ to be non-zero (but small) away from $\Sigma$.
Our new approach is based on the fact that the error resulting from this can be compensated by the coercivity property of the excess term~$E[\tilde\Sigma,\omega]$. 
Indeed,~$E[\tilde \Sigma,\omega]$ can be viewed as a relative energy or a tilt-excess. 
In our previous work~\cite{FHLS20}, we used the relative energy to prove the weak-strong uniqueness of multi-phase mean curvature flow by monitoring this functional over time and closing a Gronwall argument.
In the present work, we study the more classical static problem of optimal partitions.
Here, we require deeper, more precise coercivity estimates for the relative energy which allow us to prove that Steiner trees are local minimizers of the total interface length functional, see Theorem~\ref{MainResult} below. 
Note that we prove local minimality in the natural $L^1$ topology, corresponding to the strongest result one can hope for.
In particular, they are also local minimizers with respect to the Hausdorff distance, which has more the character of an $L^\infty$ topology.

\medskip

In a recent, related but independent work, Pluda and Pozzetta \cite{pluda2022minimizing} have shown that for equal surface tensions and among competitors with at most three phases, regular flat partitions are local minimizers in the sense that the overall interface length cannot be decreased by modifying the partition in a suitably small neighbourhood of the interfaces (such as the one depicted in Figure~\ref{fig:nbhds1}).
Theorem~\ref{MainResult} in our work substantially strengthens this result in that we establish local minimality with respect to the $L^1$ distance in the full domain.
Furthermore, Theorem~\ref{MainResult} allows for a rather broad class of surface tensions and for an arbitrary number of phases.
In particular, we prove local minimality also with respect to perturbations with phases not present in the original partition.
Also Pluda and Pozzetta rely on a notion of local calibration, in the sense that they restrict classical paired calibrations to their chosen domain.
In contrast, our notion provides a calibration throughout the entire domain by means of a cutoff procedure, enabling us to work in the $L^1$ topology.
Naturally, these vector fields are not divergence-free, which is the reason why we need to carefully exploit the coercivity of the relative energy in order to compensate for the additional terms.

\medskip

Our second main result, Theorem~\ref{thm:stationary}, establishes the  precise structure of stationary points of the interface length functional with respect to the Hausdorff distance. 
This extends the work of Allard and Almgren~\cite{AllardAlmgren} who show that stationary points in the sense of varifolds are unions of straight line segments that satisfy a force balance condition at triple or higher-order junctions. 
Indeed, our self-contained proof shows that the network of interfaces is made up of finitely many straight line segments that only meet in triple junctions at which the Herring condition holds.
The difference to~\cite{AllardAlmgren} is that we do not only make use of variations along smooth vector fields but also more singular deformations, which allows us to probe changes in the topology of the network and enables us to rule out higher-order junctions.
Let us note that also in this theorem, we obtain the sharpest statement with regards to different topologies, as the notion of stationary point in the $L^1$ topology is stronger than in the Hausdorff sense.

\medskip

The more natural notion of stationarity that allows us to consider topology changing deformation is intrinsic to the functional framework and is understood in the sense that the local (downward) slope $|\nabla E|$ vanishes.
This generalization of the gradient is precisely the notion introduced by De~Giorgi, Marino, and Tosques~\cite{DeGiorgiMarinoTosques} to define gradient flows in metric spaces, see also~\cite{AmbrosioGigliSavare}.
In our case of the Hausdorff-type distance $d_H$, their local slope reads
\begin{align*}
	|\nabla E|(\chi)= \max\bigg\{ 0, \limsup_{d_H( \chi,\tilde\chi)\to 0} \frac{  E[\chi]-E[\tilde \chi] }{d_H( \chi,\tilde\chi)}\bigg\}.
\end{align*}
It is easy to see that this slope does not vanish for a partition 
into four quadrants with four $90^\circ$ angles, for example. 
Indeed, the total length decreases linearly with the radius of the ball when replacing the quadruple junction by two triple junctions in the ball (see, e.g., Lemma~\ref{lemma:steinertreeconstrunction}).

%
%
%
			
\section{Main results \& definitions}

\subsection{Main results}
We establish that partitions of open sets in $\Rd[2]$, whose network of interfaces consists of 
finitely many straight segments with a singular set made up of finitely many triple junctions, 
at each of which the \textit{Herring angle condition} is satisfied, are locally minimizing for the
interface energy with respect to $L^1$ perturbations of the phases.

\begin{theorem}
\label{MainResult}
Let $d=2$, $P\geq 2$ be an integer, $\sigma\in\Rd[P\times P]$ an admissible matrix
of surface tensions in the sense of Definition~\ref{DefinitionAdmissibleSurfaceTensions} 
and $\bar\chi=(\bar\chi_1,\ldots,\bar\chi_P)$ a regular flat partition in the sense of 
Definition~\ref{DefinitionFlatPartition} of a bounded
domain $D\subset\Rd[2]$ with smooth boundary~$\partial D$. 

Then there exists $\bar s\in (0,1]$ such that~$\bar\chi$ 
is a local Dirichlet minimizer within an $L^1$ neighborhood of size~$\bar s$ of the interface energy functional~$E$ 
from~\eqref{EnergyFunctionalInterfaces} in the sense that
for all competing partitions $\chi=(\chi_1,\ldots,\chi_P)$ 
of~$D$ with finite interface energy it holds 
$E[\bar\chi]\leq E[\chi]$ whenever $\max_{i=1,\ldots,P} 
\|\chi_i{-}\bar\chi_i\|_{L^1(D)}\leq \bar s$ and $\chi_{i}{-}\bar\chi_{i}$ has zero trace 
on~$\partial D$ for all phases $i\in\{1,\ldots,P\}$. 
\end{theorem}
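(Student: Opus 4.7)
The plan is to establish local minimality via a \emph{local paired calibration}, that is, a finite family of vector fields $\xi_1,\ldots,\xi_P\colon D\to\mathbb{R}^2$ whose restriction to a small neighborhood of the interface network of $\bar\chi$ realizes the classical paired calibration of Lawlor--Morgan, and which are then smoothly cut off to the whole of $D$. Subtracting the resulting calibration identities for $\chi$ and $\bar\chi$ and integrating by parts (using the zero-trace hypothesis on $\partial D$) will give
\begin{align*}
E[\chi]-E[\bar\chi] \;=\; \mathcal{R}[\chi] \;-\; \sum_{i=1}^{P}\int_D (\chi_i-\bar\chi_i)\,\divv\xi_i\,\dx,
\end{align*}
where $\mathcal{R}[\chi]\geq 0$ is a tilt-excess capturing the suboptimality of the calibration. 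The whole argument then revolves around showing that the coercivity of $\mathcal{R}[\chi]$ dominates the error term arising from $\divv\xi_i$ once the $L^1$-deviation is small enough.

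\emph{Construction of the calibration.} On a tubular neighborhood of each maximal straight segment $\bar\Sigma_{ij}$ of $\bar\chi$ with unit normal $\bar\nu_{ij}$ pointing from phase $i$ to phase $j$, I prescribe constant vectors $\xi_i^0,\xi_j^0\in\mathbb{R}^2$ satisfying $\xi_i^0-\xi_j^0=\sigma_{ij}\bar\nu_{ij}$. At each triple junction between phases $i,j,k$, the Herring angle condition $\sigma_{ij}\bar\nu_{ij}+\sigma_{jk}\bar\nu_{jk}+\sigma_{ki}\bar\nu_{ki}=0$ is precisely the compatibility condition guaranteeing that three constants $\xi_i^0,\xi_j^0,\xi_k^0$ can be chosen simultaneously realizing all three pairwise differences; global compatibility across the planar network is ensured by the gauge freedom to add a common constant on each connected component. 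I then multiply each $\xi_i^0$ by a smooth cutoff supported in a slightly larger tubular neighborhood of the network, obtaining vector fields $\xi_i$ defined on all of $D$. Admissibility of $\sigma$ in the sense of Definition~\ref{DefinitionAdmissibleSurfaceTensions} ensures that $|\xi_i-\xi_j|\leq\sigma_{ij}$ persists for all pairs $(i,j)$ throughout $D$, including those not adjacent in $\bar\chi$.

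\emph{Calibration identity and excess.} Decomposing
\begin{align*}
\sum_{i=1}^P\int_D \xi_i\cdot d\nabla\chi_i \;=\; \sum_{i<j}\int_{\Sigma_{ij}(\chi)}(\xi_i-\xi_j)\cdot\nu^{\chi}_{i\to j}\,d\mathcal{H}^1,
\end{align*}
the pointwise bound $|\xi_i-\xi_j|\leq\sigma_{ij}$ gives $E[\chi]\geq \sum_i\int_D\xi_i\cdot d\nabla\chi_i$, with equality for $\chi=\bar\chi$. Setting $\mathcal{R}[\chi]:=E[\chi]-\sum_i\int_D\xi_i\cdot d\nabla\chi_i\geq 0$ and integrating by parts yields the identity displayed above. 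Inside the inner tubular neighborhood of $\bar\Sigma_{ij}$, where $\xi_i-\xi_j=\sigma_{ij}\bar\nu_{ij}$, the excess density $\sigma_{ij}-(\xi_i-\xi_j)\cdot\nu^{\chi}_{i\to j}$ is the quadratic tilt $\tfrac{\sigma_{ij}}{2}|\bar\nu_{ij}-\nu^{\chi}_{i\to j}|^2$, while in the cutoff shell it is bounded below by a positive multiple of $\sigma_{ij}$ once the cutoff has strictly reduced $|\xi_i-\xi_j|$.

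\emph{Closing the estimate.} The main obstacle is that the error term is only linear in the $L^1$-deviation, whereas the excess $\mathcal{R}[\chi]$ vanishes to higher order near $\bar\chi$. Since $\divv\xi_i$ is supported in the cutoff shell, the integrand in the error can be nonzero only where either (i) an interface of $\chi$ enters the shell, charged to the (tilt-and-perimeter) excess of $\mathcal{R}[\chi]$ there via a trace/isoperimetric inequality relating the local $L^1$-deviation to the perimeter of $\chi_i$ inside the shell, or (ii) $\chi$ creates a new phase arrangement across the interior of the shell, in which case the corresponding interface piece is charged directly to the perimeter excess of $\mathcal{R}[\chi]$. A separate treatment near triple junctions exploits again the Herring condition to ensure the calibration remains strictly admissible in an open cone of normal directions, handling the singular geometry of the network. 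Combining these pieces yields
\begin{align*}
\bigg|\sum_{i=1}^P\int_D(\chi_i-\bar\chi_i)\,\divv\xi_i\,\dx\bigg|\leq\mathcal{R}[\chi]
\end{align*}
for $\bar s$ chosen sufficiently small depending only on the geometry of $\bar\chi$ and the surface tensions $\sigma$, which together with the core identity gives $E[\chi]\geq E[\bar\chi]$ and concludes the proof.
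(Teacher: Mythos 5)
Your overall strategy is the one the paper follows (a cut-off paired calibration plus a relative-energy identity as in Lemma~\ref{LemmaEnergyEstimate}, with the divergence error absorbed into the excess), but the two places where the actual difficulty sits are missing or asserted rather than proved. First, the construction of the calibration is incomplete: a paired calibration requires a vector field $\xi_k$ for \emph{every} phase $k$, including phases absent at a given segment or junction, and the argument later needs not just $|\xi_i-\xi_j|\le\sigma_{i,j}$ for such pairs but the \emph{strict} shortness $|\xi_i-\xi_j|\le\delta_1\sigma_{i,j}<\sigma_{i,j}$ away from $I_{i,j}$ (property~\emph{ii)} of Definition~\ref{DefinitionLocalCalibration}); this is exactly where the simplex embedding~\eqref{eq:coercivitySurfaceTensions} is used (projection of the out-of-plane vertices $q_k^p$ onto the plane of the three present phases, Lemma~\ref{LemmaExistenceLocalCalibration}), and it does not follow from the triangle inequality or from ``gauge freedom''. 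You also never specify how the per-segment constants are interpolated across a triple junction ball; without the wedge decomposition and the weights $\lambda^p_c$, together with the flux constraint~\emph{iv)} ($\nabla\cdot(\xi_i-\xi_j)=0$ in a strip $\{\dist(\cdot,\mathcal{I}_i)<\delta_3\bar r\}\cap\bar\Omega_j$), your error term is supported right up to the interface, and the subsequent absorption cannot work.

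Second, and more seriously, the final inequality $\bigl|\sum_i\int_D(\chi_i-\bar\chi_i)\,\divv\xi_i\,\dx\bigr|\le\mathcal{R}[\chi]$ is not justified by your case distinction, and it is false as a purely ``error $\le$ excess'' statement: a competitor obtained by sliding an interface parallel to itself inside the tubular neighborhood (the degenerate directions of Figure~\ref{fig:hexagon}) has excess $\mathcal{R}[\chi]$ essentially zero while $\chi_i-\bar\chi_i\not\equiv 0$ on the region where $\divv\xi_i\neq 0$, so the excess alone cannot dominate the error; the error must instead \emph{vanish} there, which is precisely what the flux constraint delivers, and the remaining contributions (phase $i$ appearing in $\bar\Omega_j$ at distance $>\delta_3\bar r$ from $\mathcal{I}_i$) are bounded by an isoperimetric argument whose perimeter input is \emph{not} controlled by the excess on the ``flat'' parts of $\partial^*\Omega_i$ (those carry no tilt). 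Handling these flat parts requires a separate mechanism: in the paper this is the decomposition of $S_{i,j}$ into flat, steep and residual pieces, the slicing argument charging flat and residual pieces to interfaces with phases absent at the feature (which are short by the strict inequality above), and Lemma~\ref{lemma:flat_close_to_interface}, where the $L^1$-smallness $\bar s$ is used a second time to exclude flat pieces far from $\mathcal{I}_i$ — see Lemmas~\ref{lemma:geometry}--\ref{lemma:short_inside_strip} and Proposition~\ref{prop:stripe_lower_bound}. Your cases (i)--(ii) (``trace/isoperimetric inequality'' and ``charged to the perimeter excess'') do not address this degenerate regime, so the central estimate of the proof is still open in your proposal.
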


The reverse implication is addressed by our second main result.
It states that any stationary point of the length functional in a domain has to be a regular flat partition in any compactly contained subdomain.
In particular, any local minimizer in the Hausdorff distance~$d_H$ and hence any local Dirichlet minimizer in~$L^1$ (as in Theorem~\ref{MainResult}) is a regular flat partition.

\begin{theorem}\label{thm:stationary}
	Let $d=2$, $P\geq2$, and suppose the surface tensions are equal, say, $\sigma_{i,j}=1$ for all $i,j\in\{1,\ldots,P\}$ with $i\neq j$.
	Let $\chi = (\chi_1,\ldots,\chi_P)$ be a stationary point of the interface length functional in a bounded convex domain $D\subset \Rd[2]$ with smooth boundary $\partial D$ according to Definition~\ref{def:stationary}.  
	
	Then the collection of all interfaces $\cup_i \partial^\ast \{\chi_i=1\} \cap D$ consists of finitely many straight line segments whose beginning and end may either be on the boundary $\partial D$ or at a triple junction at which three lines meet at a contact angle of $120^\circ$.
\end{theorem}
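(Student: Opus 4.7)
The plan is twofold: smooth inner variations supply the classical stationary varifold identities and hence local straightness of interfaces, while singular surgeries inside small balls — which are genuinely topology changing, and therefore outside the scope of smooth vector field variations — rule out quadruple and higher junctions as well as non-$120^\circ$ triple junctions. Throughout, stationarity in the sense of Definition~\ref{def:stationary} is used in its metric form: for every family of competitors $\chi_r$ with $d_H(\chi,\chi_r)\to 0$ one has $E[\chi_r] \geq E[\chi] - o(d_H(\chi,\chi_r))$; equivalently, for any family producing a linear-in-$r$ length decrease, the Hausdorff distance must also be at least linear in the same quantity.

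First I would apply smooth vector field variations. Given any compactly supported $X\colon D\to\Rd[2]$, the flow $\Phi_t=\Id+tX$ generates competitors $\chi_t=\chi\circ\Phi_t^{-1}$ with $d_H(\chi,\chi_t)=O(t)$, and the first variation formula reads
\[
E[\chi_t]-E[\chi] = t\int_\Sigma \mathrm{div}_\Sigma X \dH + O(t^2),
\]
where $\Sigma=\bigcup_i \partial^\ast\{\chi_i=1\}\cap D$. Testing with $\pm X$ forces vanishing first variation of $\Sigma$ as a multiplicity-one varifold. A standard monotonicity and density lower bound argument, combined with $\mathcal{H}^1(\Sigma)<\infty$ (from $E[\chi]<\infty$), then reduces $\Sigma$ — away from a locally finite singular set $\mathcal S$ — to a disjoint union of open straight segments, recovering the Allard--Almgren picture but with no a priori control on the multiplicity of junctions.

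Next I would classify each $p\in\mathcal S$ by a Steiner-type surgery. For almost every small $r$, write $\Sigma\cap\overline{B_r(p)}$ as $k$ radial segments from $p$ to points $q_1,\dots,q_k\in\partial B_r(p)$, with consecutive arcs of $\partial B_r(p)\setminus\{q_j\}$ carrying distinct phases. Any admissible competitor $\tilde\chi_r$ that coincides with $\chi$ outside $B_r(p)$, preserves the phases on each boundary arc, and whose interface inside $B_r(p)$ is some phase-compatible network connecting the $q_j$, satisfies $d_H(\chi,\tilde\chi_r)\leq 2r$. Choosing $\tilde\chi_r$ so that this network is a shortest feasible one, classical planar Steiner theory as encoded in Lemma~\ref{lemma:steinertreeconstrunction} yields a length deficit of at least $c(k,\theta)\,r$ with $c(k,\theta)>0$ \emph{unless} $k=3$ with the three segments meeting at $120^\circ$ angles (a collinear $k=2$ configuration being absorbed into a straight segment, i.e.\ not a singular point at all). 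Vanishing slope then excludes every other interior configuration. Boundary accumulation is handled analogously: using the convexity of $D$, a local straightening of the contact with $\partial D$ gives a linear-in-$r$ shortening whenever a boundary contact angle would be non-admissible.

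The main obstacle is the compatibility of the Steiner surgery with the partition structure when $k\geq 4$: the shortest network replacing the $k$-pronged configuration must respect the cyclic arrangement of phases on $\partial B_r(p)$, so one must check that among the combinatorially admissible shortest trees the length gain over the $k$-pronged configuration is still $\Theta(r)$. This reduces to the observation that adjacent arcs carry distinct phases, so the $k$-fold vertex can always be split into two (or more) triple vertices joined by a short connector that fits inside $B_r(p)$, and a convex-geometric calculation shows that the constrained minimum length is strictly smaller than $\sum_j |p-q_j|$ by a universal amount linear in $r$ depending only on the angular profile of the $q_j$. Once this is in place, local finiteness of $\mathcal S$, the straightness of each arc between consecutive junctions, the absence of tangential self-contacts, and the Herring $120^\circ$ identity all follow either from the same surgery at finer scales or from Step~1.
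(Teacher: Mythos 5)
Your proposal is correct and follows essentially the same route as the paper: first variation under smooth inner variations plus a monotonicity/density argument to obtain the local picture of finitely many radial segments at each point, and then Steiner-tree surgeries in small balls, combined with the bound $d_H(\tilde\chi,\chi)\le 2r$ and the vanishing-slope condition \eqref{EqCondition}, to exclude junctions of order at least four and non-$120^\circ$ triple junctions, including the phase-compatible splitting of a high-order vertex by a fork as in Lemma~\ref{lemma:steinertreeconstrunction}. The only substantive difference is that you import the structure step from Allard--Almgren-type regularity for stationary one-dimensional varifolds, whereas the paper keeps the argument self-contained by deriving its own monotonicity formula (Lemma~\ref{lemma:monotonicity}) from the equilibrium equation and obtaining the density bound via the fill-in-the-ball competitor.
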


\subsection{Basic definitions}
We collect in this section the necessary definitions and notation. We start
with the admissible class of surface tension matrices.

\begin{definition}[Admissible matrix of surface tensions]
\label{DefinitionAdmissibleSurfaceTensions}
Let $P\geq 2$ be an integer and $\sigma=(\sigma_{i,j})_{i,j=1,\ldots,P}\in \Rd[P\times P]$ be a 
symmetric matrix with $\sigma_{i,i}= 0$ and $\sigma_{i,j}>0$ for all $i,j\in\{1,\ldots,P\},i\neq j$. 
The matrix $\sigma$ is called an \emph{admissible matrix of surface tensions} if the following 
stability condition is satisfied: there exists a 
non-degenerate $(P{-}1)$-simplex $(q_1,\ldots, q_P)$ in $\Rd[P-1]$ such that 
\begin{align}
\label{eq:coercivitySurfaceTensions}
\sigma_{i,j} = |q_i-q_j|
\end{align} 
for all $i,j \in \{1,\ldots,P\}$.
\end{definition}

Note that an admissible matrix of surface tensions
obviously satisfies the strict triangle inequality, i.e.,
\begin{align}
\label{TriangleInequalitySurfaceTensions}
\sigma_{i,j} < \sigma_{i,k} + \sigma_{k,j}
\end{align}
for all pairwise distinct~$i,j,k\in\{1,\ldots,P\}$.
The strict triangle inequality~\eqref{TriangleInequalitySurfaceTensions} rules out the possibility
of the nucleation of a third phase along an interface between two phases, whereas our
stronger assumption~\eqref{eq:coercivitySurfaceTensions} is a stability 
condition with respect to nucleation of clusters
of phases at a triple junction. 

\begin{definition}[Partitions with finite interface energy, cf.\ \cite{AmbrosioFuscoPallara}]
\label{DefinitionPartition}
Let $D\subset\Rd$ be an open set, $P\geq 2$ be an integer, $\sigma\in\Rd[P\times P]$ be an admissible
matrix of surface tensions in the sense of Definition~\ref{DefinitionAdmissibleSurfaceTensions}, 
and $\chi=(\chi_1,\ldots,\chi_P)\colon D\to \{0,1\}^P$ be a measurable map
such that $\chi_i=\chi_{\Omega_i}$ for all $i\in\{1,\ldots,P\}$
in terms of a family of Lebesgue measurable sets $\Omega=(\Omega_1,\ldots,\Omega_P)$.

We call the map~$\chi$, or equivalently the family of sets~$\Omega$, 
a \emph{partition of $D$ with finite interface energy} if
the following two conditions are satisfied:
\begin{itemize}[leftmargin=0.7cm] 
\item[i)] The family~$\Omega$ is a partition of~$D$ in the
					sense that for all $i,j\in\{1,\ldots,P\}$ with $i\neq j$ we have
					$\Omega_i\subset D$ and the sets $\Omega_i\cap\Omega_j$
					as well as $D\setminus\bigcup_{i=1}^P\Omega_i$ have
					$\mathcal{H}^d$ measure zero.
\item[ii)] Define for each~$i,j\in\{1,\ldots,P\}$ with $i\neq j$ 
					 the interface $S_{i,j}:=\partial^*\Omega_i\cap\partial^*\Omega_j$.
					 We then require that the partition~$\Omega$ has finite interfacial
					 surface area with respect to the surface tension matrix~$\sigma$
					 in the sense that
\begin{align}\label{EnergyFunctionalInterfaces}
E[\chi] := E[\Omega] :=
\sum_{i,j=1,i\neq j}^P \sigma_{i,j}\int_{S_{i,j}}1\,\mathrm{d}\mathcal{H}^{d-1}
< \infty.
\end{align}
\end{itemize}
\end{definition}

Note that for a partition with finite interface energy~$\Omega=(\Omega_1,\ldots,\Omega_P)$ 
of an open set~$D\subset\Rd$, each phase~$\Omega_i$ is a set of finite perimeter in $D$. 

Next we introduce the notion of a regular flat partition, see Figure~\ref{fig:partition} for an illustration, 
for which we show the local minimality of the interface energy functional (cf.\ Theorem~\ref{MainResult} above).

\begin{definition}[Regular flat partition]
\label{DefinitionFlatPartition}
Let $d=2$, let $P\geq 2$ be an integer, and let $\sigma\in\Rd[P\times P]$ be an admissible matrix
of surface tensions in the sense of Definition~\ref{DefinitionAdmissibleSurfaceTensions}. 
Let $\bar\chi=(\bar\chi_1,\ldots,\bar\chi_P)$ be a partition with finite interface energy 
of a bounded domain $D\subset\Rd[2]$ with smooth boundary $\partial D$ 
in the sense of Definition~\ref{DefinitionPartition}. The partition $\bar\chi$,
or equivalently the underlying family of sets $\bar\Omega=(\bar\Omega_1,\ldots,\bar\Omega_P)$ 
such that $\bar\chi_i=\chi_{\bar\Omega_i}$ for all~$i\in\{1,\ldots,P\}$,
is called a \emph{regular flat partition of $D$} if
\begin{itemize}[leftmargin=0.7cm] 
\item[i)] Each phase~$\bar\Omega_i$, $i\in\{1,\ldots,P\}$, is a non-empty open subset of $D$
					consisting of finitely many connected components, each of which having
					a piecewise smooth boundary such that the closure of $\partial^*\bar\Omega_i$
					equals $\partial\bar\Omega_i$ for all	$i\in\{1,\ldots,P\}$.
\item[ii)] For all $i,j\in\{1,\ldots,P\}$ with $i\neq j$,
					the interfaces 
					\begin{align}
					I_{i,j} := \partial\bar\Omega_i\cap\partial\bar\Omega_j
					\end{align}
          are required to be represented by a finite disjoint union of straight line segments.
					For each such interface~$I_{i,j}$ we denote by~$\bar{\vec{n}}_{i,j}$ the associated unit normal
					vector field pointing from~$\bar\Omega_i$ into~$\bar\Omega_j$.
\item[iii)] Two distinct interfaces may intersect in $D$ only at the endpoints of
					 their associated straight line segments, and all such
					 intersection points in $D$ are triple junctions. If three distinct interfaces
					 $I_{i,j}$, $I_{j,k}$ and $I_{k,i}$ meet at a triple junction~$p\in D$, $i,j,k\in\{1,\ldots,P\}$, 
					 they join at angles as encoded by the Herring angle condition
					 \begin{align}\label{HerringAngleCondition}
					 \sigma_{i,j}\bar{\vec{n}}_{i,j}(p) 
					 + \sigma_{j,k}\bar{\vec{n}}_{j,k}(p) 
					 + \sigma_{k,i}\bar{\vec{n}}_{k,i}(p) = 0.
					 \end{align}
\item[iv)] Two distinct interfaces do not intersect on the boundary $\partial D$.
					 If an interface intersects the boundary $\partial D$, then only
					 at exactly one of its endpoints and by forming an angle~$\theta\in (0,\pi)$.
\end{itemize}
\end{definition}

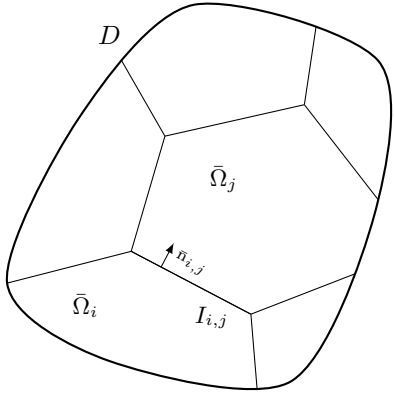
\begin{figure}
	\begin{tikzpicture}[scale=.8]
		%
		\draw[thick] plot [smooth cycle] coordinates {(0:3)(40:4)(90:3.5)(140:2.5)(200:3.3)(240:2.8)(300:3.2)};
		\draw (200:3.3) -- (210:1.2);
		\draw (210:1.2) -- (110:1.4);
		\draw (110:1.4) -- (115:2.84);
		\draw (110:1.4) -- (45:2.6);
		\draw (45:2.6) -- (57:3.73);
		\draw (45:2.6) -- (5:3.07);
		\draw (210:1.2) -- node[near start, sloped, above] {{\tiny$\qquad\;\;\bar{\vec{n}}_{i,j}$}} (300:1.9);
		\draw (210:1.2) --	node[near start,sloped,inner sep=0cm,above,anchor=south west,
														 minimum height=.35cm,minimum width=.35cm]	(N) {} (300:1.9);
		\draw[-latex] (N.south west) -- (N.north west);
		\draw (300:1.9) -- (290:3.08);
		\draw (300:1.9) -- (340:2.85);
		%
	%
		%
		\draw (0.5,0.6) node {{\small$\bar\Omega_j$}};
	 	\draw (0.3,-1.7) node {{\small$I_{i,j}$}};
		\draw (-1.8,-1.5) node {{\small$\bar\Omega_i$}};
		\draw (-1.4,3) node {{$D$}};
	\end{tikzpicture}
	\caption{An illustration of a regular flat partition of a convex and bounded domain~$D$
					 with smooth boundary~$\partial D$.	\label{fig:partition}}
\end{figure}

Next we state our precise definition of stationarity. 
As mentioned in the introduction, the notion of stationary point has to be slightly stronger than only permitting domain variations along smooth vector fields to allow us to change the topology of a partition.
Stationarity here means that the local (downward) slope $|\nabla E|$ (cf.\ \cite{DeGiorgiMarinoTosques, AmbrosioGigliSavare}) vanishes.
This slope is of the form
\begin{align*}
	|\nabla E|(\chi)= \max\bigg\{ 0, \limsup_{d_H( \chi,\tilde\chi)\to 0} \frac{  E[\chi]-E[\tilde \chi] }{d_H( \chi,\tilde\chi)}\bigg\},
\end{align*}
where the distance $d_H$ is precisely given by
\begin{align*}
	d_H( \chi, \tilde\chi)
	:= \max_{1\leq i\leq P} d\big(\{ \chi_i=1\},\{\tilde \chi_i=1\}\big),
\end{align*}
and~$d$ on the right-hand side denotes the Hausdorff distance of sets. 
We state the relation $|\nabla E|(\chi) =0$ in the equivalent and for us more 
convenient form~\eqref{EqCondition} below.

\begin{definition}[Stationary points of the interface length functional]\label{def:stationary}
	Let~$\sigma \in \Rd[P\times P]$ be an admissible matrix of surface tensions in the sense of Definition~\ref{DefinitionAdmissibleSurfaceTensions} and let $\chi=(\chi_1,\ldots,\chi_P)$ be a finite partition of a bounded domain~$D \subset \Rd[d]$. 
	We say that~$\chi$ is a \emph{stationary point of the total interface length functional~$E$ in the domain~$D$} provided~$\chi$ partitions~$\partial D$ into finitely many arc segments and for any other partition $\tilde\chi = (\tilde \chi_1,\ldots,\tilde \chi_P)$ of $D$ with the same trace on~$\partial D$, it holds
	\begin{align}
		\label{EqCondition}
		E[\tilde \chi]
		\geq
		E[\chi]
		-o\big(d_H(\tilde \chi,\chi)\big).
	\end{align} 
\end{definition}

Lawlor and Morgan \cite{Lawlor-Morgan} introduced the concept of a paired
calibration for a partition. A key ingredient for the proof of Theorem~\ref{MainResult} 
is a suitable ``local variant'' of this notion. As a preparation towards the 
definition, we decompose the network of interfaces $\mathcal{I}:=\bigcup_{i,j\in\{1,\ldots,P\},i\neq j} I_{i,j}$ of a given 
regular flat partition $\bar\chi$ according to its topological features.
I.e., we split it into straight line segments on the one hand, and triple junctions
as well as boundary endpoints on the other hand. 
Suppose that the partition $\bar\chi$ has $N$ of such topological features $\mathcal{T}_n$, $1\leq n\leq N$. 
We then split $\{1,\ldots,N\}=:\mathcal{C} \cupdot \mathcal{P} \cupdot \mathcal{B}$ 
with the convention that indices $p\in\mathcal{P}$
enumerate the triple junctions present in the network, that indices $b\in\mathcal{B}$
enumerate the boundary endpoints, and that indices $c\in\mathcal{C}$ enumerate 
the connected components of the non-empty two-phase interfaces $I_{i,j}\neq\emptyset$
(which in our particular case are straight line segments). 
We then define for all $p\in\mathcal{P}$ resp.\ $b\in\mathcal{B}$ 
a singleton $\mathcal{T}_p = \{\mathrm{t}_p\}$ resp.\ $\mathcal{T}_b=\{\mathrm{t}_b\}$ containing
the associated triple junction $\mathrm{t}_p \in \Rd[2]$ resp.\
the associated boundary endpoint $\mathrm{t}_b \in \Rd[2]$.
For all $c\in\mathcal{C}$ we define a set $\mathcal{T}_c\subset I_{i,j}$ representing
the corresponding connected straight line segment (without boundary points)
of a non-empty two-phase interface~$I_{i,j}$.

Let a phase $i\in\{1,\ldots,P\}$ and a topological feature $n\in\{1,\ldots,N\}$ be fixed. 
We say that the $i$-th phase of the partition $\bar\chi$ is
\emph{present at the topological feature~$\mathcal{T}_{n}$}
if $\partial\bar\Omega_i\cap\mathcal{T}_{n}\neq\emptyset$.
Otherwise, we say that the phase is \emph{absent at $\mathcal{T}_n$}.
For a straight line segment $c\in\mathcal{C}$ and a triple junction $p\in\mathcal{P}$
(resp.\ a boundary endpoint $b\in\mathcal{B}$),
we write $c\sim p$ (resp.\ $c\sim b$) if and only if $\mathcal{T}_p$ (resp.\ $\mathcal{T}_b$) 
is an endpoint of $\mathcal{T}_c$. Otherwise, we write $c\not\sim p$ (resp.\ $c\not\sim b$).

With this language in place, we turn our attention to a convenient yet slightly technical
notion of a pair of admissible localization scales $(\bar r,\delta)$ for a regular flat partition.

\begin{definition}[Admissible localization scales]
\label{def:locRadius}
Let $D\subset\Rd[2]$ be a bounded domain with smooth boundary $\partial D$. 
Let $P\geq 2$ be an integer, and let $\bar\chi=(\bar\chi_1,\ldots,\bar\chi_P)$ 
be a regular flat partition of~$D$ in the sense of Definition~\ref{DefinitionFlatPartition}
with underlying family of sets $\bar\Omega=(\bar\Omega_1,\ldots,\bar\Omega_P)$.
We call $(\bar r, \delta)\in (0,1] {\times} (0,\frac{1}{2}]$ a
\emph{pair of admissible localization scales for the partition $\bar\chi$} if:
\begin{itemize}[leftmargin=0.7cm] 
\item[i)] For all distinct triple junctions $p,p'\in\mathcal{P}$ and all distinct
				  boundary endpoints $b,b'\in~\mathcal{B}$ we have
					\begin{align*}
					B_{2\bar r}(\mathcal{T}_{p}) \cap B_{2\bar r}(\mathcal{T}_{p'}) &= \emptyset,
					\quad B_{2\bar r}(\mathcal{T}_{p}) \subset\subset D\setminus B_{2\bar r}(\partial D),
					\\
					B_{2\bar r}(\mathcal{T}_{b}) \cap B_{2\bar r}(\mathcal{T}_{b'}) &= \emptyset.
					\end{align*}
					Moreover, for all~$n\in\mathcal{P}\cup\mathcal{B}$ and all~$c\in\mathcal{C}$
					such that~$c\not\sim n$ it holds
					\begin{align*}
					B_{2\bar r}(\mathcal{T}_n) \cap B_{2\bar r}(\mathcal{T}_c) = \emptyset,
					\end{align*}
					and for all~$c\in\mathcal{C}$ such that~$c\not\sim b$ for all~$b\in\mathcal{B}$,
					we have
					\begin{align*}
						B_{2\bar r}(\mathcal{T}_c) \subset\subset D\setminus B_{2\bar r}(\partial D).
					\end{align*}
\item[ii)] For all distinct straight line segments $c,c'\in\mathcal{C}$ it holds
					 $B_{2\bar r}(\mathcal{T}_{c}) \cap B_{2\bar r}(\mathcal{T}_{c'}) \neq \emptyset$
					 if and only if there is a triple junction $p\in\mathcal{P}$ such that
					 $c\sim p$ and $c'\sim p$. In this case, we on top require that 
					 \begin{align*}
					 B_{\delta\bar r}(\mathcal{T}_{c}) \cap B_{\delta\bar r}(\mathcal{T}_{c'})
					 \subset\subset B_{\bar r/2}(\mathcal{T}_p). 
					 \end{align*}
\item[iii)] Let $i\in\{1,\ldots,P\}$. For each connected component $C_i$ of the $i$-th
					  phase~$\bar\Omega_i$, the set $C_i\setminus\{\dist(\cdot,\partial C_i)\leq \bar r\}$
						is an open and connected set with compact Lipschitz boundary.
\item[iv)] The smooth domain boundary~$\partial D$ satisfies the exterior resp.\ the interior
						ball condition on scale~$2\bar r$.
\end{itemize}
\end{definition}

It is immediately clear from Definition~\ref{DefinitionFlatPartition} of a regular flat partition
that an associated admissible pair of localization scales $(\bar r,\delta)$ always exists. Given
such an admissible pair, it is convenient to introduce a ``dumbbell-type neighborhood'' of the network
of interfaces $\mathcal{I} = \bigcup_{i,j\in\{1,\ldots,P\},\,i\neq j} I_{i,j}$ as follows:
\begin{align}
\label{eq:dumbbellNbhdNetwork}
U^{\mathcal{I}}_{(\bar r,\delta)} := \bigcup_{c\in\mathcal{C}} \bigg(
B_{\delta\bar r}(\mathcal{T}_c)\setminus \bigcup_{n\in\mathcal{P}\cup\mathcal{B}} 
B_{\bar r}(\mathcal{T}_n)\bigg) \cup \bigcup_{n\in\mathcal{P}\cup\mathcal{B}} B_{\bar r}(\mathcal{T}_n)
\end{align}
see Figure~\ref{fig:nbhds}. 
There are natural analogs of the decomposition of the network $\mathcal{I}$
into its topological features and its dumbbell-type neighborhood on the
level of individual interfaces and phases, respectively. To this end, 
fix two distinct phases $i,j\in\{1,\ldots,P\}$
such that $I_{i,j}\neq\emptyset$.
We then denote by $\mathcal{N}_{i,j} \subset \{1,\ldots,N\}$ the subset
of those topological features $n\in\{1,\ldots,N\}$ with both the phases $i$ and $j$ being present
at $\mathcal{T}_n$ (i.e., $\mathcal{T}_n\subset I_{i,j}$). 
In analogy to the splitting $\{1,\ldots,N\} =: \mathcal{C} \cupdot \mathcal{P} \cupdot \mathcal{B}$ 
we decompose $\mathcal{N}_{i,j} =: \mathcal{C}_{i,j} \cupdot \mathcal{P}_{i,j} \cupdot \mathcal{B}_{i,j}$.
More precisely, $\mathcal{C}_{i,j}$ labels the straight line segments present in the interface $I_{i,j}$,
whereas $\mathcal{P}_{i,j}$ and $\mathcal{B}_{i,j}$ label the endpoints of $I_{i,j}$
which are triple junctions or boundary endpoints, respectively. A ``dumbbell-type neighborhood''
for the interface $I_{i,j}\neq\emptyset$ is then simply defined as
\begin{align}
\label{eq:dumbbellNbhdInterface}
U^{I_{i,j}}_{(\bar r,\delta)} := \bigcup_{c\in\mathcal{C}_{i,j}} \bigg(
B_{\delta\bar r}(\mathcal{T}_c)\setminus \bigcup_{n\in\mathcal{P}_{i,j}\cup\mathcal{B}_{i,j}} 
B_{\bar r}(\mathcal{T}_n)\bigg) \cup \bigcup_{n\in\mathcal{P}_{i,j}\cup\mathcal{B}_{i,j}} B_{\bar r}(\mathcal{T}_n).
\end{align}
A ``dumbbell-type neighborhood'' for the boundary $\mathcal{I}_i := \bigcup_{j=1,\,j\neq i}^P I_{i,j}$
of the $i$-th phase $\bar\Omega_i$ is finally provided by
\begin{align}
\label{eq:dumbbellNbhdPhase}
U^{\mathcal{I}_i}_{(\bar r,\delta)} := 
\bigcup_{\substack{j=1,\, j\neq i \\ I_{i,j}\neq\emptyset}}^P U^{I_{i,j}}_{(\bar r,\delta)}.
\end{align}
We refer to Figure~\ref{fig:nbhds} for an illustration of these neighborhoods.

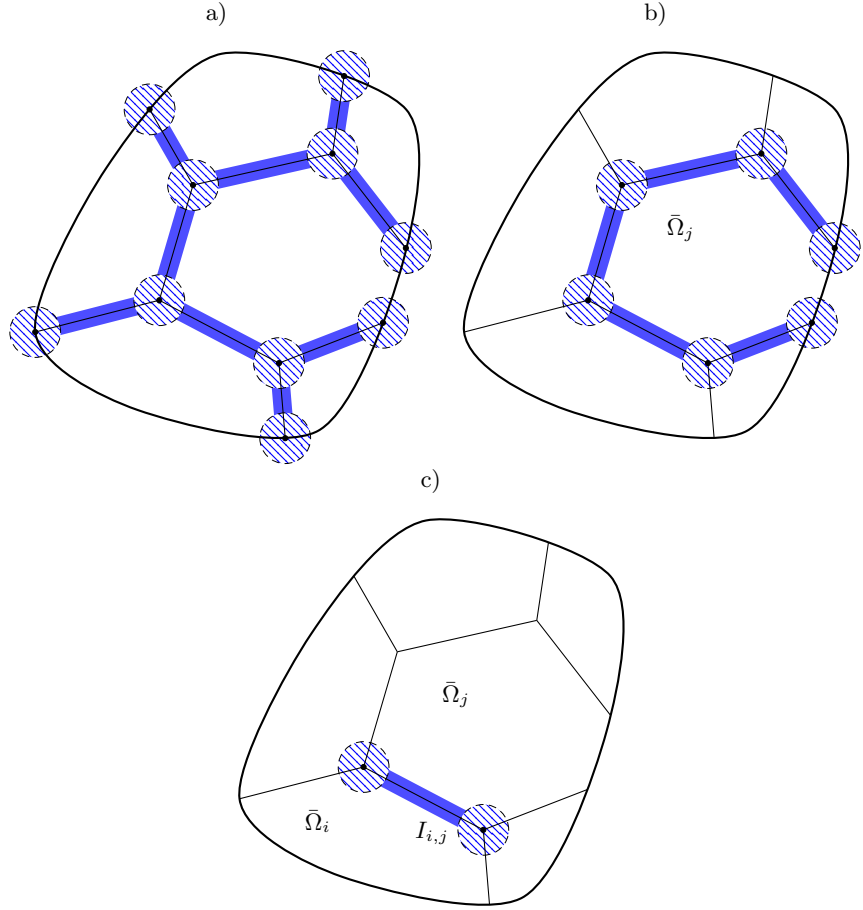
\begin{figure}
\centering
\subcaptionbox{\label{fig:nbhds1}}{
\centering
\begin{tikzpicture}[scale=.8]
\foreach \p in {(110:1.4),(115:2.84),(45:2.6),(57:3.73),(5:3.07),(340:2.85),(290:3.08),(300:1.9),(210:1.2),(200:3.3)}
{
		\draw[name path = circle\p, draw=none] \p circle (12pt);
}
		%
		\draw[line width=7pt, color=blue, opacity=0.7] (110:1.4) -- (115:2.84);
		\draw[line width=7pt, color=blue, opacity=0.7] (200:3.3) -- (210:1.2);
		\draw[line width=7pt, color=blue, opacity=0.7] (210:1.2) -- (110:1.4);
		\draw[line width=7pt, color=blue, opacity=0.7] (110:1.4) -- (45:2.6);
		\draw[line width=7pt, color=blue, opacity=0.7] (45:2.6) -- (57:3.73);
		\draw[line width=7pt, color=blue, opacity=0.7] (45:2.6) -- (5:3.07);
		\draw[line width=7pt, color=blue, opacity=0.7] (210:1.2) -- (300:1.9);
		\draw[line width=7pt, color=blue, opacity=0.7] (300:1.9) -- (290:3.08);
		\draw[line width=7pt, color=blue, opacity=0.7] (300:1.9) -- (340:2.85);
\foreach \p in {(110:1.4),(115:2.84),(45:2.6),(57:3.73),(5:3.07),(340:2.85),(290:3.08),(300:1.9),(210:1.2),(200:3.3)}
{
		\fill[color=white] \p circle (12pt);
		\draw[pattern = north west lines, pattern color = blue, 
					densely dashed, line width=0.2pt] \p circle (12pt);
		\draw[fill] \p circle (1.2pt);
}
		\draw (110:1.4) -- (115:2.84);
		\draw (200:3.3) -- (210:1.2);
		\draw (210:1.2) -- (110:1.4);
		\draw (110:1.4) -- (45:2.6);
		\draw (45:2.6) -- (57:3.73);
		\draw (45:2.6) -- (5:3.07);
		\draw (210:1.2) -- (300:1.9);
		\draw (300:1.9) -- (290:3.08);
		\draw (300:1.9) -- (340:2.85);
		\draw[thick] plot [smooth cycle] coordinates {(0:3)(40:4)(90:3.5)(140:2.5)(200:3.3)(240:2.8)(300:3.2)};
\end{tikzpicture}
}
\subcaptionbox{\label{fig:nbhds2}}{
\centering
\begin{tikzpicture}[scale=.8]
\foreach \p in {(110:1.4),(45:2.6),(5:3.07),(340:2.85),(300:1.9),(210:1.2)}
{
		\draw[name path = circle\p, draw=none] \p circle (12pt);
}
		%
		\draw[line width=7pt, color=blue, opacity=0.7] (210:1.2) -- (110:1.4);
		\draw[line width=7pt, color=blue, opacity=0.7] (110:1.4) -- (45:2.6);
		\draw[line width=7pt, color=blue, opacity=0.7] (45:2.6) -- (5:3.07);
		\draw[line width=7pt, color=blue, opacity=0.7] (210:1.2) -- (300:1.9);
		\draw[line width=7pt, color=blue, opacity=0.7] (300:1.9) -- (340:2.85);
\foreach \p in {(110:1.4),(45:2.6),(5:3.07),(340:2.85),(300:1.9),(210:1.2)}
{
		\fill[color=white] \p circle (12pt);
		\draw[pattern = north west lines, pattern color = blue, 
					densely dashed, line width=0.2pt] \p circle (12pt);
		\draw[fill] \p circle (1.2pt);
}
		\draw (110:1.4) -- (115:2.84);
		\draw (200:3.3) -- (210:1.2);
		\draw (210:1.2) -- (110:1.4);
		\draw (110:1.4) -- (45:2.6);
		\draw (45:2.6) -- (57:3.73);
		\draw (45:2.6) -- (5:3.07);
		\draw (210:1.2) -- (300:1.9);
		\draw (300:1.9) -- (290:3.08);
		\draw (300:1.9) -- (340:2.85);
		\draw (0.5,0.6) node {{\small$\bar\Omega_j$}};
		\draw[thick] plot [smooth cycle] coordinates {(0:3)(40:4)(90:3.5)(140:2.5)(200:3.3)(240:2.8)(300:3.2)};
\end{tikzpicture}
}
\subcaptionbox{\label{fig:nbhds3}}{
\centering
\begin{tikzpicture}[scale=.8]
\foreach \p in {(300:1.9),(210:1.2)}
{
		\draw[name path = circle\p, draw=none] \p circle (12pt);
}
		%
		\draw[line width=7pt, color=blue, opacity=0.7] (210:1.2) -- (300:1.9);
\foreach \p in {(300:1.9),(210:1.2)}
{
		\fill[color=white] \p circle (12pt);
		\draw[pattern = north west lines, pattern color = blue, 
					densely dashed, line width=0.2pt] \p circle (12pt);
		\draw[fill] \p circle (1.2pt);
}
		\draw (110:1.4) -- (115:2.84);
		\draw (200:3.3) -- (210:1.2);
		\draw (210:1.2) -- (110:1.4);
		\draw (110:1.4) -- (45:2.6);
		\draw (45:2.6) -- (57:3.73);
		\draw (45:2.6) -- (5:3.07);
		\draw (210:1.2) -- (300:1.9);
		\draw (300:1.9) -- (290:3.08);
		\draw (300:1.9) -- (340:2.85);
		\draw (0.5,0.6) node {{\small$\bar\Omega_j$}};
	 	\draw (0.1,-1.7) node {{\small$I_{i,j}$}};
		\draw (-1.8,-1.5) node {{\small$\bar\Omega_i$}};
		\draw[thick] plot [smooth cycle] coordinates {(0:3)(40:4)(90:3.5)(140:2.5)(200:3.3)(240:2.8)(300:3.2)};
\end{tikzpicture}
}
\caption{An illustration of the ``dumbbell-type neighborhood'' with respect to
				 a) the network of interfaces~$\mathcal{I}$,
				 b) the grain boundary~$\mathcal{I}_j$ of the $j$th phase~$\bar\Omega_j$,
				 c) the interface~$I_{i,j}$ between phase~$\bar\Omega_i$ and phase~$\bar\Omega_j$.	
				 \label{fig:nbhds}}
\end{figure}

\begin{definition}[Local paired calibration]
\label{DefinitionLocalCalibration}
Let $D\subset\Rd[2]$ be a bounded domain with smooth boundary $\partial D$,
and let $P\geq 2$ be an integer. Let $\bar\chi=(\bar\chi_1,\ldots,\bar\chi_P)$ 
be a regular flat partition of~$D$ in the sense of Definition~\ref{DefinitionFlatPartition}
with respect to an admissible matrix of surface tensions $\sigma\in\Rd[P\times P]$,
and let $\bar\Omega=(\bar\Omega_1,\ldots,\bar\Omega_P)$ be the 
underlying family of sets. Denote the interfaces by $I_{i,j}=\partial\bar\Omega_i\cap\partial\bar\Omega_j$
for all $i,j\in\{1,\ldots,P\}$ with $i\neq j$, and their union by 
$\mathcal{I}:=\bigcup_{i,j\in\{1,\ldots,P\},\,i\neq j} I_{i,j}$.
Let $(\bar r,\delta)$ be an admissible pair of localization scales for the
partition~$\bar\chi$ in the sense of Definition~\ref{def:locRadius}.

We call a family of Lipschitz vector fields $\xi_i\colon\overline{D}\to\Rd[2]$, $i\in\{1,\ldots,P\}$, 
a \emph{local paired calibration for the partition $\bar\chi$ with respect to $(\bar r,\delta)$} 
if the following conditions are satisfied for all $i,j\in\{1,\ldots,P\}$ with $i\neq j$:
\begin{itemize}[leftmargin=0.7cm] 
\item[i)]  \emph{(Coercivity through locality constraint)} It holds
					 \begin{align*}
					 \supp\xi_i \subset \overline{D} \cap U^{\mathcal{I}}_{(\bar r,\delta)},
					 \end{align*}
					 with the ``dumbbell-type neighborhood'' $U^{\mathcal{I}}_{(\bar r,\delta)}$
					 of the network of interfaces $\mathcal{I}$ defined in~\eqref{eq:dumbbellNbhdNetwork}.
\item[ii)] \emph{(Coercivity through length constraint)} If $I_{i,j} \neq \emptyset$, then 
					 \begin{align*}
					 \xi_i-\xi_j &= \sigma_{i,j}\bar{\vec{n}}_{i,j} \text{ along } I_{i,j}, \\
					 |\xi_i-\xi_j| &\leq \sigma_{i,j} \text{ on } \overline{D},
					 \end{align*}
           and there exists a constant $\delta_1=\delta_1(\sigma)\in (0,1)$
           such that the vector field $\xi_i{-}\xi_j$ is short in the sense that 
					 \begin{align*}
					 |\xi_i-\xi_j| &\leq \delta_1\sigma_{i,j} \quad\text{on } 
					 \overline{D} \setminus U^{I_{i,j}}_{(\frac{\bar r}{2},\delta)},
					 \end{align*}
					 with the ``dumbbell-type neighborhood'' $U^{I_{i,j}}_{(\frac{\bar r}{2},\delta)}$
					 of the interface $I_{i,j}$ defined in~\eqref{eq:dumbbellNbhdInterface}.
					 In case $I_{i,j} = \emptyset$, the condition of the previous display 
					 is required throughout $\overline{D}$. 
\item[iii)] \emph{(Coercivity through orientation constraint)} 
            For each constant~$\kappa\in (0,1)$,
					  there exists $\delta_2=\delta_2(\sigma,\kappa)\in (0,\delta)$ such that if $I_{i,j}\neq\emptyset$, 
						then the validity of the inequality $\big|\frac{\xi_i(x)-\xi_j(x)}{\sigma_{i,j}}\big|>1-\delta_2$
						for $x\in \overline{D}\cap\{\dist(\cdot,I_{i,j})\leq 2\bar r\}$ 
						implies the estimate $\big|\frac{\xi_i(x)-\xi_j(x)}{\sigma_{i,j}}-\bar{\vec{n}}_{i,j}\big| 
						\leq \kappa$ for the same point $x\in\overline{D}$.
\item[iv)] \emph{(Coercivity through flux constraint)} 
           There exists~$\delta_3=\delta_3(\sigma)\in (0,\delta)$ such that if
					 $I_{i,j}\neq\emptyset$, the following constraint on the divergence
				   of the vector field $\xi_i{-}\xi_j$ is satisfied:
           \begin{align*}
					 \nabla\cdot\xi_i - \nabla\cdot\xi_j = 0 \quad\text{in } 
					 \{\dist(\cdot,\mathcal{I}_{i}) < \delta_3\bar r\}\cap\bar\Omega_j.
					 \end{align*}
\end{itemize}
\end{definition}

We will verify in Lemma~\ref{LemmaExistenceLocalCalibration} below that each
regular flat partition of $D$ admits a local paired calibration in the above sense.

\subsection{Auxiliary results}
A second key ingredient for the proof of Theorem~\ref{MainResult}---next to the notion
of a local paired calibration---is provided by the following elementary result 
relating the interface energy of two partitions $\chi$ and $\bar\chi$.

\begin{lemma}[Relative energy equality]
\label{LemmaEnergyEstimate}
Let $d\geq 2$ and $D\subset\Rd$ be a bounded domain with smooth boundary. Let $P\geq 2$ be an integer, 
and let $\chi=(\chi_1,\ldots,\chi_P)$ resp.\ $\bar\chi=(\bar\chi_1,\ldots,\bar\chi_P)$
be two partitions of $D$ with finite interface energy
in the sense of Definition~\ref{DefinitionPartition} with respect to an admissible
matrix of surface tensions $\sigma\in\Rd[P\times P]$. Denote then by $\mathcal{S}=(S_{i,j})_{i,j\in\{1,\ldots,P\},i\neq j}$
resp.\ $\mathcal{I}=(I_{i,j})_{i,j\in\{1,\ldots,P\},i\neq j}$ the associated family of interfaces
of the partitions $\chi$ resp.\ $\bar\chi$. We also define for all distinct $i,j\in\{1,\ldots,P\}$
\begin{align}
\label{UnitNormalsChi}
\vec{n}_{i,j} &:=\frac{\nabla \chi_{j}}{|\nabla \chi_{j}|}
=-\frac{\nabla \chi_i}{|\nabla \chi_i|},
\quad \mathcal{H}^{d-1}\text{ almost everywhere on } S_{i,j},
\\ \label{UnitNormalsBarChi}
\bar{\vec{n}}_{i,j} &:=\frac{\nabla \bar\chi_{j}}{|\nabla \bar\chi_{j}|}
=-\frac{\nabla \bar\chi_i}{|\nabla \bar\chi_i|},
\quad \mathcal{H}^{d-1}\text{ almost everywhere on } I_{i,j},
\end{align}
the (measure-theoretic) unit normal vectors of the interfaces $S_{i,j}$ resp.\ $I_{i,j}$ pointing 
from the $i$\emph{th} to the $j$\emph{th} phase of the partition $\chi$ resp.\ $\bar\chi$.

Assume there exists a family of Lipschitz vector fields $(\xi_i)_{i=1,\ldots,P}$ on $\overline{D}$
satisfying $|\xi_i-\xi_j|\leq \sigma_{i,j}$
on $\overline{D}$, as well as $\xi_i-\xi_j=\sigma_{i,j}\bar{\vec{n}}_{i,j}$ up to 
$\mathcal{H}^{d{-}1}$ null sets on the interfaces $I_{i,j}$ for all $i,j\in\{1,\ldots,P\}$
with $i\neq j$. Then it holds
\begin{equation}\label{EnergyEstimate}
\begin{aligned}
E[\chi] &= E[\bar\chi] + E[\chi|\bar\chi]
+ \sum_{i=1}^P \int_{\partial D} 2(\chi_i{-}\bar\chi_i) \vec{n}_{\partial D}\cdot\xi_i \dS
\\&~~~
+ \sum_{i,j=1,i\neq j}^P \int_{D} 2(\chi_i{-}\bar\chi_i) \bar\chi_j (\nabla\cdot\xi_i{-}\nabla\cdot\xi_j) \dx,
\end{aligned}
\end{equation}
with the non-negative \emph{relative energy functional} $E[\chi|\bar\chi]\geq 0$ defined by
\begin{align}
\label{DefinitionRelativeEntropy}
E[\chi|\bar\chi] :=
\sum_{i,j=1,i\neq j}^P \sigma_{i,j} \int_{S_{i,j}} 
1-\frac{\xi_i{-}\xi_j}{\sigma_{i,j}}\cdot\vec{n}_{i,j} \,\mathrm{d}\mathcal{H}^{d-1}.
\end{align}
\end{lemma}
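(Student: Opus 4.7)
My plan is a short, direct calibration-theoretic computation. The argument is purely algebraic once one invokes the Gauss--Green theorem for sets of finite perimeter; no deep analytic obstacle should arise, and all the work is in careful bookkeeping of orientations and of the symmetry $S_{i,j}=S_{j,i}$, $\vec{n}_{j,i}=-\vec{n}_{i,j}$.

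First, I would note that the definition \eqref{DefinitionRelativeEntropy} of $E[\chi|\bar\chi]$ is designed precisely so that by a trivial algebraic rearrangement
\begin{equation*}
E[\chi] = E[\chi|\bar\chi] + \sum_{i\neq j} \int_{S_{i,j}} (\xi_i-\xi_j)\cdot \vec{n}_{i,j} \dH,
\end{equation*}
and non-negativity $E[\chi|\bar\chi]\geq 0$ follows immediately from $|\xi_i-\xi_j|\leq \sigma_{i,j}$ on $\overline{D}$.

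Second, I would convert the calibration sum on the right into a bulk-plus-boundary expression. By the symmetry $S_{i,j}=S_{j,i}$, $\vec{n}_{j,i}=-\vec{n}_{i,j}$, relabeling $i\leftrightarrow j$ in the $-\xi_j$ contribution shows that the two halves coincide, so the full sum equals
\begin{equation*}
2\sum_{i=1}^P \int_{\partial^*\Omega_i\cap D} \xi_i\cdot \vec{n}_{\Omega_i}\dH,
\end{equation*}
where $\vec{n}_{\Omega_i}$ is the measure-theoretic outer unit normal of $\Omega_i$. Applying the Gauss--Green theorem for sets of finite perimeter to each phase $\Omega_i$ then identifies this with $2\sum_i\int_D \chi_i\, \nabla\cdot \xi_i\dx$ modulo a trace contribution on $\partial D$. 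The identical manipulation applied to $\bar\chi$ yields an analogous identity for $E[\bar\chi]$; the crucial point is that the matching condition $\xi_i-\xi_j=\sigma_{i,j}\bar{\vec{n}}_{i,j}$ along $I_{i,j}$ forces the corresponding relative-energy term $E[\bar\chi|\bar\chi]$ to vanish. Subtracting the two identities produces
\begin{equation*}
E[\chi]-E[\bar\chi]-E[\chi|\bar\chi] = 2\sum_i \int_D (\chi_i-\bar\chi_i)\nabla\cdot\xi_i\dx \,+\, \text{boundary contribution in }\chi-\bar\chi.
\end{equation*}

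Finally, I would recast the bulk term into the symmetric form of \eqref{EnergyEstimate} by exploiting the two partition constraints. Multiplying by $\sum_j\bar\chi_j\equiv 1$ gives $\sum_i(\chi_i-\bar\chi_i)\nabla\cdot\xi_i=\sum_{i,j}(\chi_i-\bar\chi_i)\bar\chi_j\nabla\cdot\xi_i$, while $\sum_i(\chi_i-\bar\chi_i)\equiv 0$ implies $\sum_{i,j}(\chi_i-\bar\chi_i)\bar\chi_j\nabla\cdot\xi_j=0$; these two expressions may thus be subtracted for free to produce the claimed antisymmetric combination $\nabla\cdot\xi_i-\nabla\cdot\xi_j$, with the diagonal $i=j$ terms vanishing identically so that the sum may be restricted to $i\neq j$. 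The main point requiring vigilance is the sign convention $\nabla\chi_i=-\vec{n}_{\Omega_i}\dH$ on $\partial^*\Omega_i$ in the integration by parts, combined with the fact that the sums in \eqref{EnergyFunctionalInterfaces} and \eqref{DefinitionRelativeEntropy} run over ordered pairs and hence double-count each interface.
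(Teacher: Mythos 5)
Your proposal is correct and follows essentially the same route as the paper's proof: introduce $E[\chi|\bar\chi]$ by adding zero, convert the calibration flux $\sum_{i\neq j}\int_{S_{i,j}}(\xi_i{-}\xi_j)\cdot\vec{n}_{i,j}\,\mathrm{d}\mathcal{H}^{d-1}$ into bulk divergence plus boundary trace terms via Gauss--Green for the sets of finite perimeter, identify $E[\bar\chi]$ exactly through the calibration property $\xi_i-\xi_j=\sigma_{i,j}\bar{\vec{n}}_{i,j}$ on $I_{i,j}$, and symmetrize the bulk term using $\sum_i\chi_i=\sum_i\bar\chi_i=1$. The only cosmetic difference is that you apply the ``energy equals relative energy plus flux'' identity to both $\chi$ and $\bar\chi$ and subtract, whereas the paper inserts the partition identity before the second integration by parts; the computations are the same.
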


A family of vector fields $(\xi_i)_{i=1,\ldots,P}$ as in Lemma~\ref{LemmaEnergyEstimate}
which in addition satisfies $\nabla\cdot\xi_i=0$ \emph{globally in $D$} for all $i\in\{1,\ldots,P\}$
is called a \emph{paired calibration} for the partition $\bar\chi$ following the work of 
Lawlor and Morgan \cite{Lawlor-Morgan}. It is a classical fact that the existence of
a paired calibration implies global minimality for the underlying network of interfaces. 
Note that this result is of course recovered by \eqref{EnergyEstimate}.

However, the above reasoning towards global minimality does not rely on any of the properties of
the relative energy functional $E[\chi|\bar\chi]$ except for its non-negativity. 
The idea behind the proof of our main result, Theorem~\ref{MainResult},
stems from the observation that for local minimality 
it may suffice to enforce the divergence constraint only in a small neighborhood around
the interfaces (or more precisely, in the quantitative sense of Definition~\ref{DefinitionLocalCalibration}~\textit{iv)}). 
For the remaining contributions from the bulk term in~\eqref{EnergyEstimate},
we aim to argue that they can be absorbed by the relative energy functional due to its coercivity properties; 
at least for small perturbations of the phases in $L^1$.

Before we will make this rigorous, we first have to show that regular flat partitions indeed admit 
local paired calibrations in the sense of Definition~\ref{DefinitionLocalCalibration}, provided that the domain is locally convex at the boundary pounts $b \in \mathcal{B}$ -- a technical assumption that will be removed in the proof of Theorem~\ref{MainResult}. This is
the content of the following result.

\begin{lemma}[Existence of local paired calibrations for regular flat partitions]
\label{LemmaExistenceLocalCalibration}
Let $d=2$, $P\geq 2$, and $\bar\chi=(\bar\chi_1,\ldots,\bar\chi_P)$ be a regular flat partition 
of a bounded domain with smooth boundary $D\subset\Rd[2]$
in the sense of Definition~\ref{DefinitionFlatPartition}. Moreover,
let $(\bar r,\delta)\in (0,1]{\times}(0,\frac{1}{2}]$ be an associated pair of admissible localization
scales in the sense of Definition~\ref{def:locRadius}, such that for all $b\in \mathcal{B}$ the set $B_{2 \bar r}(\mathcal{T}_b) \cap D$ is convex.
Then there exists a family of Lipschitz vector fields 
$(\xi_i)_{i=1,\ldots,P}$ on $\overline{D}$ such that this family is a local
paired calibration for $\bar\chi$ with respect to $(\bar r,\delta)$ in the sense of 
Definition~\ref{DefinitionLocalCalibration}. 
\end{lemma}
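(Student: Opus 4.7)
The plan is to construct the calibration in the form $\xi_i(x):=\psi(x)A(x)q_i$, where $q_1,\ldots,q_P\in\Rd[P-1]$ are the simplex points from Definition~\ref{DefinitionAdmissibleSurfaceTensions} (so $|q_i-q_j|=\sigma_{i,j}$), $A\colon\overline{D}\to\operatorname{Hom}(\Rd[P-1],\Rd[2])$ is a Lipschitz family of linear contractions $\|A(x)\|_{\mathrm{op}}\leq 1$, and $\psi\colon\overline{D}\to[0,1]$ is a scalar Lipschitz cutoff of the form $\psi(x)=\rho(\dist(x,\mathcal{I}))$. To each topological feature of $\bar\chi$ I attach a constant contraction. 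At a triple junction $p\in\mathcal{P}$ of phases $i,j,k$, let $A_p$ be the orthogonal projection onto $V_p:=\operatorname{span}(q_i-q_j,q_j-q_k)$ composed with the unique isometry $V_p\to\Rd[2]$ sending $q_i-q_j\mapsto\sigma_{i,j}\bar{\vec{n}}_{i,j}(p)$ and $q_j-q_k\mapsto\sigma_{j,k}\bar{\vec{n}}_{j,k}(p)$; the Herring condition~\eqref{HerringAngleCondition} ensures well-definedness and that $q_k-q_i$ is then sent to $\sigma_{k,i}\bar{\vec{n}}_{k,i}(p)$. For each segment $c\in\mathcal{C}$ of $I_{i,j}$ (resp.\ boundary endpoint $b\in\mathcal{B}$ of $I_{i,j}$), set $A_c=A_b:=\frac{1}{\sigma_{i,j}}\bar{\vec{n}}_{i,j}^c\otimes(q_i-q_j)$, a rank-one contraction. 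In every case $A_n(q_{i'}-q_{j'})=\sigma_{i',j'}\bar{\vec{n}}_{i',j'}^{\mathrm{local}}$ whenever $n$ is a feature of $I_{i',j'}$.

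I glue these into a Lipschitz field by declaring $A(x):=A_n$ inside each inner ball $B_{\bar r/2}(\mathcal{T}_n)$, $A(x):=A_c$ on the central part of each tube $B_{\delta\bar r}(\mathcal{T}_c)$ away from the endpoints of $c$, and $A(x):=(1-\theta(x))A_n+\theta(x)A_c$ in each remaining transition region. Both $A_n$ and $A_c$ send $q_i-q_j$ to the same vector $\sigma_{i,j}\bar{\vec{n}}_{i,j}^c$, so the convex combination preserves this critical value; the triangle inequality for the operator norm keeps $\|A(x)\|_{\mathrm{op}}\leq 1$, and admissibility of $(\bar r,\delta)$ (Definition~\ref{def:locRadius}) ensures that transition regions for different segments do not overlap. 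The profile $\rho$ is chosen with $\rho\equiv 1$ on $[0,\delta_3\bar r]$, $\rho\leq\delta_1$ on $[\delta\bar r/2,\delta\bar r]$, and $\rho\equiv 0$ on $[\delta\bar r,\infty)$, where
\[
\delta_1(\sigma):=\max\bigl\{|A_n(q_i-q_j)|/\sigma_{i,j}:n\text{ is a feature of some }I_{k,l}\neq I_{i,j}\bigr\}\in[0,1);
\]
strictness $\delta_1<1$ follows directly from the non-degeneracy of the simplex, which prevents $q_i-q_j$ from lying in any of the subspaces $V_n$ associated with pairs different from $(i,j)$. Properties~\textit{i)}--\textit{ii)} then follow point-wise: the jump is exact on $I_{i,j}$ since $\psi=1$ and $A(x)(q_i-q_j)=\sigma_{i,j}\bar{\vec{n}}_{i,j}$ there; $|\xi_i-\xi_j|\leq\sigma_{i,j}$ globally is immediate from the contraction property; and outside $U^{I_{i,j}}_{(\bar r/2,\delta)}$ one has $\psi=0$, or $x$ lies in a tube/ball of another interface giving $|A(x)(q_i-q_j)|\leq\delta_1\sigma_{i,j}$, or $\dist(x,\mathcal{I})\geq\delta\bar r/2$ giving $\psi\leq\delta_1$. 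Property~\textit{iii)} holds with $\delta_2:=\min\{\kappa,1-\delta_1\}$, since the hypothesis $|\xi_i-\xi_j|/\sigma_{i,j}>1-\delta_2$ close to $I_{i,j}$ forces $x$ into the feature system of $I_{i,j}$, where $(\xi_i-\xi_j)/\sigma_{i,j}=\psi(x)\bar{\vec{n}}_{i,j}$. Property~\textit{iv)} holds because on $\{\dist(\cdot,\mathcal{I}_i)<\delta_3\bar r\}\cap\bar\Omega_j$ the only reachable portion of $\mathcal{I}_i$ is $I_{i,j}$, so $\psi\equiv 1$ and $A(x)(q_i-q_j)=\sigma_{i,j}\bar{\vec{n}}_{i,j}^c$ is locally constant, making $\xi_i-\xi_j$ divergence-free.

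The main obstacle is to reconcile three competing requirements on the interior of each segment $c$ of $I_{i,j}$: the jump must be exactly $\sigma_{i,j}\bar{\vec{n}}_{i,j}$, every other pair $(k,l)\neq(i,j)$ must be strictly shortened uniformly in a neighbourhood, and where condition~\textit{iv)} applies the difference $\xi_i-\xi_j$ must be locally constant. These appear incompatible for any naive ``full rotation'' ansatz on the simplex. They are simultaneously reconciled by using rank-one contractions $A_c$ along segment interiors (which strictly shorten every other pair by non-degeneracy), rank-two contractions $A_p$ at triple junctions (the unique data compatible with Herring), and the convex interpolation between the two that preserves both the contraction property and the critical jump value $q_i-q_j\mapsto\sigma_{i,j}\bar{\vec{n}}_{i,j}^c$. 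The convexity assumption on $B_{2\bar r}(\mathcal{T}_b)\cap D$ for boundary endpoints $b\in\mathcal{B}$ is used to carry out the analogous construction up to the smooth boundary $\partial D$ without leaving the domain.
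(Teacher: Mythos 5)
Your building blocks are essentially the paper's: at a triple junction you take the orthogonal projection of the surface-tension simplex onto the plane spanned by the three present vertices, composed with the isometry fixed by the Herring condition (this is exactly the paper's $R^p\pi^p q^p_i$), and along segments your rank-one contraction plays the role of the paper's two-phase vectors, with strict shortness of absent pairs coming from non-degeneracy of the simplex instead of the strict triangle inequality; that part is sound. The genuine gap is in the gluing near triple junctions, i.e.\ in your verification of the shortness clause of Definition~\ref{DefinitionLocalCalibration}~\textit{ii)}. You freeze $A\equiv A_p$ on the whole inner ball $B_{\bar r/2}(\mathcal{T}_p)$ and only start the interpolation towards $A_{c'}$ outside it. Let $p$ be a junction of the phases $i,j,k$ and let $c'$ be the adjacent segment of $I_{i,k}$, so that $j$ is absent at $\mathcal{T}_{c'}$. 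A point $x\in\mathcal{T}_{c'}$ at distance slightly larger than $\bar r/2$ from $\mathrm{t}_p$ lies \emph{outside} $U^{I_{i,j}}_{(\frac{\bar r}{2},\delta)}$: it is outside $B_{\bar r/2}(\mathcal{T}_p)$, and by Definition~\ref{def:locRadius}~\textit{i)}--\textit{ii)} it is not within $\delta\bar r/2$ of any segment of $I_{i,j}$ (the $\delta\bar r$-tubes of two segments meeting at $p$ only overlap well inside $B_{\bar r/2}(\mathcal{T}_p)$, and all other features of $I_{i,j}$ are at distance at least $2\bar r$). Hence property~\textit{ii)} demands $|\xi_i(x)-\xi_j(x)|\leq\delta_1\sigma_{i,j}$ there. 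But in your construction $\psi(x)=1$ (the point lies on $\mathcal{I}$), and by continuity with $A\equiv A_p$ on the inner ball the transition parameter $\theta(x)$ tends to $0$ as $x$ approaches $\partial B_{\bar r/2}(\mathcal{T}_p)$ along $\mathcal{T}_{c'}$, so $A(x)(q_i-q_j)\to A_p(q_i-q_j)=\sigma_{i,j}\bar{\vec{n}}_{i,j}(\mathrm{t}_p)$ and $|\xi_i(x)-\xi_j(x)|$ gets arbitrarily close to $\sigma_{i,j}$. No constant $\delta_1<1$ can absorb this: your case analysis (``$\psi=0$, or $x$ lies in a tube of another interface so that $|A(q_i-q_j)|\leq\delta_1\sigma_{i,j}$, or $\psi\leq\delta_1$'') overlooks exactly the transition zones of foreign segments attached to a junction at which both $i$ and $j$ are present, where $A$ is not the pure rank-one map $A_{c'}$ but still carries almost full weight on $A_p$. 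The same blind spot undermines your one-line argument for property~\textit{iii)} ($x$ need not lie in the ``feature system'' of $I_{i,j}$).

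The paper's construction avoids this precisely by letting the interpolation weight at a junction depend on the distance of the \emph{projection onto the segment} from $\mathrm{t}_p$, with the transition window $(\frac{\bar r}{4},\frac{3\bar r}{4})$ starting well inside the inner ball (the weights $\lambda^p_c$ in \eqref{eq:localizationTripleJunction}): by the time one leaves $B_{\bar r/2}(\mathcal{T}_p)$ through a thin foreign tube, the weight on the full-length junction value is bounded away from $1$ by a universal amount, and the shortness clause holds after suitably enlarging $\delta_1$. Your scheme is repairable in the same way---begin the transition at radius $\bar r/4$, or tie $\theta$ to the projection distance along the segment, and then enlarge your $\delta_1$ (which currently only accounts for the pure maps $A_n$, not for the interpolated ones) to cover the residual $A_p$-weight in the transition annulus---but as written the verification of property~\textit{ii)}, and with it the proof, does not go through.
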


\section{Proof of auxiliary results}

\subsection{Relative energy equality: Proof of Lemma~\ref{LemmaEnergyEstimate}}
Using the definitions~\eqref{EnergyFunctionalInterfaces} and~\eqref{DefinitionRelativeEntropy}, we may expand the energy functional by adding zero in form of
\begin{align}
\label{auxEnergyEstimate1}
E[\chi] = E[\chi|\bar\chi]
+ \sum_{i,j=1,i\neq j}^P \int_{S_{i,j}} (\xi_i{-}\xi_j)\cdot\vec{n}_{i,j} \dH.
\end{align}
Non-negativity of the functional $E[\chi|\bar\chi]$ follows from the bound
$|\xi_i-\xi_j|\leq \sigma_{i,j}$ being valid for all $i\neq j$, cf.~Definition~\ref{DefinitionLocalCalibration} ii). Exploiting
that $\mathcal{H}^{d-1}$ almost everywhere on the interface $S_{i,j}$
the skew-symmetry relation $\vec{n}_{i,j}=-\vec{n}_{j,i}$ holds true, we
may further compute by the definition of $\vec{n}_{i,j}$ in \eqref{UnitNormalsChi}
and an integration by parts
\begin{align}
\nonumber
&\sum_{i,j=1,i\neq j}^P \int_{S_{i,j}} (\xi_i{-}\xi_j)\cdot\vec{n}_{i,j} \dH
\\&\nonumber
= -\sum_{i=1}^P \int_{D} 2\xi_i\cdot\frac{\nabla\chi_i}{|\nabla\chi_i|} \dnablachii
\\&\label{auxEnergyEstimate2}
= \sum_{i=1}^P \int_{D} 2\chi_i(\nabla\cdot\xi_i) \dx
+ \sum_{i=1}^P \int_{\partial D} 2\chi_i\vec{n}_{\partial D}\cdot\xi_i \dS.
\end{align}
Since $\chi$ resp.\ $\bar\chi$ are partitions of~$D$, i.e., 
the identities $\sum_{i=1}^P\chi_i=1$, $\sum_{i=1}^P\bar\chi_i = 1$
as well as $\bar\chi_i\bar\chi_j = 0$ are satisfied~$\mathcal{H}^d$ almost everywhere in~$D$
for all $i\neq j$, it follows from adding zero that
\begin{equation}\label{auxEnergyEstimate3}
\begin{aligned}
&\sum_{i=1}^P \int_{D} 2\chi_i(\nabla\cdot\xi_i) \dx
\\&
= \sum_{i,j=1,i\neq j}^P \int_{D} 2(\chi_i{-}\bar\chi_i)\bar\chi_j(\nabla\cdot\xi_i{-}\nabla\cdot\xi_j) \dx
+ \sum_{j=1}^P \int_{D} 2\bar\chi_j(\nabla\cdot\xi_j) \dx.
\end{aligned}
\end{equation}
Another integration by parts together with the fact that $\xi_i-\xi_j=\sigma_{i,j}\bar{\vec{n}}_{i,j}$
on the interface $I_{i,j}=\partial^*\bar\Omega_i\cap\partial^*\bar\Omega_j$ moreover shows that
\begin{align}
\label{auxEnergyEstimate4}
\sum_{j=1}^P \int_{D} 2\bar\chi_j(\nabla\cdot\xi_j) \dx 
= E[\bar\chi] - \sum_{j=1}^P \int_{\partial D} 2\bar\chi_j\vec{n}_{\partial D}\cdot\xi_j \dS.
\end{align}
Hence, the asserted equation \eqref{EnergyEstimate} is a consequence of the identities
\eqref{auxEnergyEstimate1}--\eqref{auxEnergyEstimate4}. \qed

\subsection{Existence of local paired calibrations: Proof of Lemma~\ref{LemmaExistenceLocalCalibration}}
Recall first from the discussion preceding Definition~\ref{def:locRadius}
the decomposition of the network of interfaces $\mathcal{I}=\bigcup_{i,j\in\{1,\ldots,P\},i\neq j} I_{i,j}$
into its topological features. The proof is now split into several steps.
We start with the construction of local building blocks for the family of
calibration vector fields $(\xi_i)_{i\in\{1,\ldots,P\}}$.

\textit{Step 1: Construction of auxiliary vectors $\vv{\xi^{c}_{i}}$ for each phase $i$ and $c\in\mathcal{C}$.}
Let a phase $i\in\{1,\ldots,P\}$ and a straight line segment $c\in\mathcal{C}$ be fixed. 
We then define
\begin{align}
\label{def:buildingBlockXiTwoPhase}
\vv{\xi^c_i}:= \frac{\sigma_{i,k}}{2}\bar{\vec{n}}_{l,k}
+\frac{\sigma_{i,l}}{2}\bar{\vec{n}}_{k,l} 
\quad\text{if } \mathcal{T}_c\subset I_{k,l} 
\text{ for } k,l\in\{1,\ldots,P\},\,k\neq l.
\end{align}
In the case of $i\in\{k,l\}$, i.e., the phase $i$ being present at
the straight line segment $\mathcal{T}_c \subset I_{k,l}$,
note that the above definition reduces to 
\begin{align}
\label{def:buildingBlockXiTwoPhaseMajorityPhases}
\vv{\xi^c_i} = \frac{\sigma_{i,l}}{2}\bar{\vec{n}}_{i,l} \quad \text{if } i=k, \quad
\vv{\xi^c_i} = \frac{\sigma_{i,k}}{2}\bar{\vec{n}}_{i,k} \quad \text{if } i=l,
\end{align}
as a consequence of $\sigma_{k,k}=\sigma_{l,l}=0$. 

\textit{Step 2: Construction of auxiliary vectors $\vv{\xi^{b}_{i}}$ for each phase $i$ and $b\in\mathcal{B}$.}
Let a phase $i\in\{1,\ldots,P\}$ and a boundary endpoint $b\in\mathcal{B}$ be fixed.
We then define in analogy to the case of straight line segments from the previous step
\begin{align}
\label{def:buildingBlockXiBoundary}
\vv{\xi^b_i} := \frac{\sigma_{i,k}}{2}\bar{\vec{n}}_{l,k}
+\frac{\sigma_{i,l}}{2}\bar{\vec{n}}_{k,l} 
\quad\text{if } \mathcal{T}_b\subset I_{k,l} 
\text{ for } k,l\in\{1,\ldots,P\},\,k\neq l.
\end{align}

\textit{Step 3: Construction of auxiliary vectors $\vv{\xi^{p}_{i}}$ for each phase $i$ and $p\in\mathcal{P}$.}
We proceed by exploiting the stability condition for triple junctions~\eqref{eq:coercivitySurfaceTensions}.
Let a phase $i\in\{1,\ldots,P\}$ and a triple junction $p\in\mathcal{P}$ be fixed.
Assume the pairwise distinct phases $k,l,m\in\{1,\ldots,P\}$
to be present at the triple junction $\mathcal{T}_p=\{\mathrm{t}_p\}$.
As a consequence of the stability condition~\eqref{eq:coercivitySurfaceTensions},
there exists a non-degenerate $(P{-}1)$-simplex $(q^p_1,\ldots, q^p_P)$ in $\Rd[P-1]$ such that
the matrix of surface tensions is given by
\begin{align}
\label{eq:embeddingSurfaceTensionMatrix}
\sigma_{i,j} = |q_i^p - q_j^p| \quad \text{for all } i,j\in\{1,\ldots,P\}.
\end{align}
Observe that we may assume without loss of generality that $q^p_k = 0$.

The triangle $(q_k^p,q_l^p,q_m^p)$ is non-degenerate and spans a plane 
$E^p$ in $\Rd[P-1]$ containing the origin $0\in\Rd[P-1]$. 
We denote the orthogonal projection onto $E^p$ by $\pi^p$, and  
then define
\begin{align}
\label{def:buildingBlockXiTripleJunction}
	\Rd[2]\ni\vv{\xi^p_i} := R^p\pi^p q_i^p
\end{align}
for the affine isometry $R^p\colon E^p \to \Rd[2]$ determined by
\begin{align}
\label{def:buildingBlockXiTripleJunctionMajorityPhases}
\begin{cases}
R^p q_k^p = \frac{\sigma_{k,l}}{3}\bar{\vec{n}}_{k,l}(\mathrm{t}_p)
+\frac{\sigma_{k,m}}{3}\bar{\vec{n}}_{k,m}(\mathrm{t}_p),
\\
R^p q_l^p = \frac{\sigma_{l,m}}{3}\bar{\vec{n}}_{l,m}(\mathrm{t}_p)
+\frac{\sigma_{l,k}}{3}\bar{\vec{n}}_{l,k}(\mathrm{t}_p), 
\\
R^p q_m^p = \frac{\sigma_{m,k}}{3}\bar{\vec{n}}_{m,k}(\mathrm{t}_p)
+\frac{\sigma_{m,l}}{3}\bar{\vec{n}}_{m,l}(\mathrm{t}_p).
\end{cases}
\end{align}
(The first equation of~\eqref{def:buildingBlockXiTripleJunctionMajorityPhases}
fixes the translation due to $q^k_p=0$, the other two the linear isometry by means of
the Herring condition $\sigma_{k,l} \bar{\vec{n}}_{k,l} + \sigma_{l,m}\bar{\vec{n}}_{l,m} 
+ \sigma_{m,k}\bar{\vec{n}}_{m,k} = 0$ at the triple junction $\mathcal{T}_p$.)

\textit{Step 4: The calibration property for the auxiliary vectors $\vv{\xi^n_i}$.}
Before we move on with the construction of the calibration vector fields $\xi_i$---for which the 
auxiliary vectors $\vv{\xi^n_i}$ at topological features $n\in\{1,\ldots,N\}$ from the previous three steps
serve as the main building blocks---we first prove that for all phases $i,j\in\{1,\ldots,P\}$ 
with $i\neq j$ and all topological features $n\in\{1,\ldots,N\}$ we have
\begin{align}
\label{eq:calibrationProperty}
\vv{\xi_i^n} - \vv{\xi_j^n} = \sigma_{i,j}\bar{\vec{n}}_{i,j}
\quad\text{if both phases } i \text{ and } j \text{ are present at } \mathcal{T}_n. 
\end{align}
Moreover, we claim that there exists a constant $\delta_1(\sigma)\in (0,1)$ such that
for all phases $i,j\in\{1,\ldots,P\}$ with $i\neq j$ and all topological features $n\in\{1,\ldots,N\}$ it holds
\begin{align}
\label{Length}
|\vv{\xi_i^n} - \vv{\xi_j^n}| \leq \delta_1(\sigma)\sigma_{i,j}
\quad\text{if either phase } i \text{ or phase } j \text{ is absent at } \mathcal{T}_n.
\end{align}

\textit{Proof of~\eqref{eq:calibrationProperty}:} In case of a straight line segment
$c\in\mathcal{C}$ with $\mathcal{T}_c\subset I_{i,j}$ for distinct phases $i,j\in\{1,\ldots,P\}$, 
the identity~\eqref{eq:calibrationProperty}
is an immediate consequence of~\eqref{def:buildingBlockXiTwoPhaseMajorityPhases},
$\bar{\vec{n}}_{i,j}=-\bar{\vec{n}}_{j,i}$
and the symmetry of the surface tension matrix.
The case of a boundary endpoint is treated in exactly the same way.
Finally, in case of $p\in\mathcal{P}$
with pairwise distinct phases $i,j,k\in\{1,\ldots,P\}$ being present at the triple junction~$\mathcal{T}_p$, 
we may compute based on~\eqref{def:buildingBlockXiTripleJunction}, \eqref{def:buildingBlockXiTripleJunctionMajorityPhases}
and the Herring angle condition~\eqref{HerringAngleCondition}
\begin{align*}
\vv{\xi_i^p} - \vv{\xi_j^p} &=
\frac{\sigma_{i,j}}{3}\bar{\vec{n}}_{i,j}+\frac{\sigma_{i,k}}{3}\bar{\vec{n}}_{i,k}
- \Big(\frac{\sigma_{j,i}}{3}\bar{\vec{n}}_{j,i}+\frac{\sigma_{j,k}}{3}\bar{\vec{n}}_{j,k}\Big)
= \sigma_{i,j}\bar{\vec{n}}_{i,j}.
\end{align*}

\textit{Proof of~\eqref{Length}:}
In case of a straight line segment $c\in\mathcal{C}$ such that 
$\mathcal{T}_c\subset I_{k,l}$ for some pairwise distinct phases $k,l\in\{1,\ldots,P\}$, and at least one of
the two distinct phases $i,j\in\{1,\ldots,P\}$ not being present at $\mathcal{T}_c$, this follows 
from~\eqref{def:buildingBlockXiTwoPhase} in form of
\begin{align*}
\vv{\xi^c_i} - \vv{\xi^c_j} = \frac{\sigma_{i,j}}{2}
\Big(\frac{\sigma_{l,i}{-}\sigma_{l,j}}{\sigma_{i,j}}\bar{\vec{n}}_{k,l}
+\frac{\sigma_{k,i}{-}\sigma_{k,j}}{\sigma_{i,j}}\bar{\vec{n}}_{l,k}\Big)
\end{align*}
and the strict triangle inequality \eqref{TriangleInequalitySurfaceTensions}.
The same argument applies to boundary endpoints $b\in\mathcal{B}$.

Let $p\in\mathcal{P}$ be a triple junction with pairwise distinct phases $k,l,m\in\{1,\ldots,P\}$
being present, and assume that at least one of the two distinct phases $i,j\in\{1,\ldots,P\}$
is absent at $\mathcal{T}_p$. The argument in this regime is based on the following simple observation
(recall for what follows the notation from \textit{Step 3} of this proof):
projecting an out-of-plane vector onto $E^p$ via $\pi^p$ it becomes short.
More precisely, we argue in favor of \eqref{Length} in two subcases: 

If exactly one of the two indices $i$ or $j$---say for concreteness $j$---corresponds to a phase being 
present at $\mathcal{T}_p$, then $\pi^p q_j^p = q_j^p$. 
Note that with the simplex $(q_1^p,\ldots,q_P^p)$, also the $3$-simplex $(q_k^p,q_l^p,q_m^p,q_i^p)$ is non-degenerate, 
so that $q_i^p$ necessarily is an out-of-plane vector with respect to the plane $E^p$, 
hence $\pi^p q_i^p \neq q_i^p$, which implies the strict inequality in this 
subcase by~\eqref{def:buildingBlockXiTripleJunction}. 

If both $i$ and $j$ correspond to phases being absent at $\mathcal{T}_p$, we only need to argue that $q_i^p-q_j^p$ does not 
lie in the linear subspace $E^p\subset\Rd[P-1]$. Otherwise the $4$-simplex $(q_k^p,q_l^p,q_m^p,q_i^p,q_j^p)$
would be degenerate, a contradiction. So indeed, \eqref{Length} holds true
as a consequence of the definition~\eqref{def:buildingBlockXiTripleJunction}.  

\textit{Step 5: Choice of an auxiliary localization scale $\delta'=\delta'(\sigma) \in (0,\delta)$.}
Recall that $(\bar r,\delta)$ denotes a pair of admissible localization scales
for the regular flat partition~$\bar\chi$, see Definition~\ref{def:locRadius}. 
The aim of this step is to introduce another localization scale
$\delta'\lesssim\delta$ which will be of frequent use
in the subsequent step for the construction of the vector fields $\xi_i$.

For each straight line segment $c\in\mathcal{T}_c$, denote by $\mathcal{L}^c$ the line in $\Rd[2]$
containing $\mathcal{T}_c$, choose a unit normal vector $\bar{\vec{n}}_c$ along it, and introduce a change of variables
\begin{align}
\label{eq:defDiffeoStraightLineSegments}
\Psi_{\mathcal{L}_c}\colon \mathcal{L}_c \times (-\infty,\infty) \to \Rd[2],
\quad (x,s) \mapsto x + s\bar{\vec{n}}_c.
\end{align}
Fix now a triple junction $p\in\mathcal{P}$. Note that for every
pair of distinct straight line segments 
$c,c'\in\mathcal{C}$ with $c\sim p$ and $c'\sim p$ it holds
as a consequence of Property~\textit{ii)} in Definition~\ref{def:locRadius}
\begin{align}
\label{eq:supportPropertyByLocalization1}
\Psi_{\mathcal{L}_c}\big(\mathcal{T}_c {\times} (-\delta\bar r, \delta\bar r)\big)
\cap \Psi_{\mathcal{L}_{c'}}\big(\mathcal{T}_{c'} {\times} (-\delta\bar r, \delta\bar r)\big) 
\subset\subset B_{\bar r/2}(\mathcal{T}_p).
\end{align}

\begin{figure}
\centering
\begin{tikzpicture}[scale=.8]
\fill[color=green!10] (0,0) -- (127:2.4) arc(127:250:2.4) -- cycle;
\fill[pattern = north west lines, pattern color=green] (0,0) -- (127:2.4) arc(127:250:2.4) -- cycle;
\fill[color=yellow!10] (0,0) -- (250:2.4) arc(250:377:2.4) -- cycle;
\fill[pattern = north east lines, pattern color=yellow] (0,0) -- (250:2.4) arc(250:377:2.4) -- cycle;
\fill[color=red!10] (0,0) -- (17:2.4) arc(17:127:2.4) -- cycle;
\fill[pattern = vertical lines, pattern color=red] (0,0) -- (17:2.4) arc(17:127:2.4) -- cycle;
\fill[color=blue!50, opacity=0.2] (-4.5,2.4) rectangle (0,-2.4);
\draw[thick, densely dashed] (0,0) circle (2.4cm);
\draw[fill] (0,0) circle (2pt);
\draw (0,0) node[below] {\small$\mathcal{T}_p$};
\draw[thick] (-4.5,0) -- node[very near start, below] {\small$\mathcal{T}_c$} (0,0);
\draw[thick,rotate=140] (-3,0) -- (0,0);
\draw[thick,rotate=-106] (-3,0) -- (0,0);
\draw (-1.5,1) node {\small$W^p_c$};
\draw (-3.3,2.1) node {\tiny$\Psi(\mathcal{T}_c{\times}({-}\bar r,\bar r))$};
\end{tikzpicture}
\caption{An illustration of the wedge decomposition of the neighborhood
around a triple junction~$\mathcal{T}_p$. For instance,
the wedge~$W^p_c$ associated to the straight line segment~$\mathcal{T}_c$
with endpoint at the triple junction~$\mathcal{T}_p$ is given by
the green region hatched from top left to bottom right.
\label{fig:wedges}}
\end{figure}
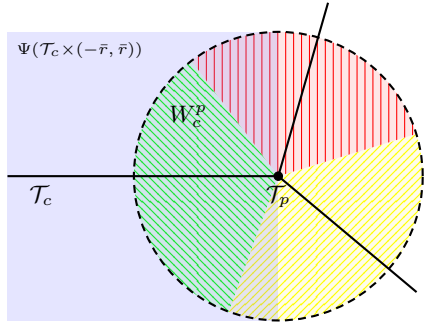

There moreover exists a family of three pairwise disjoint open wedges 
$(W^p_c)_{c\in\mathcal{C},c\sim p}$ in~$\Rd[2]$ such that 
\begin{align}
\label{eq:decompByWedges}
\mathcal{H}^2\bigg(B_{\bar r}(\mathcal{T}_p)\setminus 
\bigcup_{c\in\mathcal{C},c\sim p} W^p_{c} \bigg)
&= 0,
\\ \label{eq:inclusionWedges}
B_{\bar r}(\mathcal{T}_p) \cap W^p_c &\subset 
\Psi_{\mathcal{L}_c}\big(\mathcal{T}_c{\times}(-\bar r,\bar r)\big) 
\text{ for all } c\in\mathcal{C},\,c\sim p,
\end{align}
cf.\ Figure~\ref{fig:wedges} for an illustration of this wedge decomposition.
In fact, these wedges may be constructed through angle bisection at the triple junction $\mathcal{T}_p$
in the sense that
\begin{align}
B_{\bar r}(\mathcal{T}_p) \cap \partial W^p_c \cap \partial W^p_{c'}
= B_{\bar r}(\mathcal{T}_p) \cap \big\{x\in\Rd[2]\colon \dist(x,\mathcal{T}_c) = \dist(x,\mathcal{T}_{c'})\big\}
\end{align}
for all distinct $c,c'\in\mathcal{C}$ such that $c\sim p$ and $c'\sim p$.

Denote next by $P_{\mathcal{T}_c}(x)\in\mathcal{T}_c$ the unique nearest point on 
the straight line segment~$\mathcal{T}_c$ to a point $x\in\Rd[2]$.
Choosing a constant $\delta'=\delta'(\sigma,p)\in (0,\delta)$ small enough,
we may then guarantee that
\begin{equation}
\label{eq:supportPropertyByLocalization2}
\begin{aligned}
&B_{\bar r}(\mathcal{T}_p) 
\cap \bigg\{x\colon\dist(P_{\mathcal{T}_c}(x),\mathcal{T}_p) 
\in \Big(\frac{1}{4}\bar r,\frac{3}{4}\bar r\Big)\bigg\} 
\cap \Psi_{\mathcal{L}_c}\big(\mathcal{T}_c {\times} (-\delta'\bar r, \delta'\bar r)\big)
\\&
\subset\subset B_{\bar r}(\mathcal{T}_p) \cap W^p_c.
\end{aligned}
\end{equation}
We finally define 
\begin{align}
\label{eq:defAuxLocScale}
\delta':=\min_{p\in\mathcal{P}} \delta'(\sigma,p) \in (0,\delta).
\end{align}

\textit{Step 6: Construction of the vector fields $\xi_i$.} 
We proceed by introducing two classes of cutoff functions:
first, a family of localization functions $(\eta_c)_{c\in\mathcal{C}}$
and second, for each triple junction $p\in\mathcal{P}$ a family
of weights $(\lambda^p_c)_{c\in\mathcal{C},c\sim p}$. The former
will take care of restricting the support of the vector fields $(\xi_i)_{i\in\{1,\ldots,P\}}$
to the dumbbell-type neighborhood $U^{\mathcal{I}}_{(\bar r,\delta)}$ from~\eqref{eq:dumbbellNbhdNetwork} 
for the network of interfaces $\mathcal{I}$.
The latter in turn will be used to transition in a consistent manner between local auxiliary
calibration vectors $\vv{\xi_i^c}$ and $\vv{\xi_i^p}$ for straight line segments $c\in\mathcal{C}$
and triple junctions $p\in\mathcal{P}$ such that $c\sim p$.

Let $\theta\colon\Rd[]\to [0,1]$ 
be a smooth function such that $\theta\equiv 1$ on $(-\infty,\frac{1}{4}]$, 
$\theta\equiv 0$ on $[\frac{3}{4},\infty)$, and such that $\theta$ 
is strictly decreasing in $(\frac{1}{4},\frac{3}{4})$.
We then define for each straight line segment $c\in\mathcal{C}$ 
a localization function 
\begin{align}
\label{eq:localizationStraightLineSegment}
\eta_c\colon\Rd[2]\to [0,\infty),\quad
x \mapsto \theta\Big(\frac{\dist(x,\mathcal{T}_c)}{\delta'\bar r}\Big).
\end{align}
Let now a triple junction $p\in\mathcal{P}$ be fixed. We then define
for each straight line segment $c\in\mathcal{C}$ with $c\sim p$ a weight
\begin{align}
\label{eq:localizationTripleJunction}
\lambda_c^p\colon\Rd[2] \to [0,\infty),\quad
x \mapsto \theta\Big(\frac{\dist(P_{\mathcal{T}_c}(x),\mathcal{T}_p)}{\bar r}\Big),
\end{align}
where $P_{\mathcal{T}_c}x\in\mathcal{T}_c$ again denotes the unique nearest point on 
the straight line segment~$\mathcal{T}_c$ to a point $x\in\Rd[2]$.
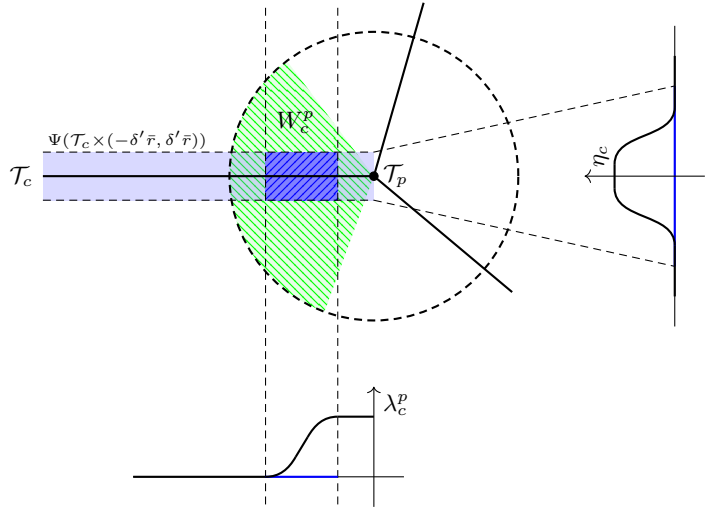
\begin{figure}
\centering
\begin{tikzpicture}[scale=.8]
\fill[color=green!10] (0,0) -- (127:2.4) arc(127:250:2.4) -- cycle;
\fill[pattern = north west lines, pattern color=green] (0,0) -- (127:2.4) arc(127:250:2.4) -- cycle;
\fill[color=blue!50, opacity=0.3] (-5.5,0.4) rectangle (0,-0.4);
\fill[color=blue!50] (-1.8,0.4) rectangle (-0.6,-0.4);
\fill[pattern = north east lines, pattern color=blue] (-1.8,0.4) rectangle (-0.6,-0.4);
\draw[thick, densely dashed] (0,0) circle (2.4cm);
\draw[fill] (0,0) circle (2pt);
\draw (0,0) node[right] {\small$\mathcal{T}_p$};
\draw[thick] (-5.5,0) -- node[left,pos=0] {\small$\mathcal{T}_c$} (0,0);
\draw[thick,rotate=140] (-3,0) -- (0,0);
\draw[thick,rotate=-106] (-3,0) -- (0,0);
\draw (-1.3,0.9) node {\small$W^p_c$};
\draw (-4.1,0.6) node {\tiny$\Psi(\mathcal{T}_c{\times}({-}\delta'\bar r,\delta'\bar r))$};
\draw[densely dashed] (-1.8,2.8) -- (-1.8,-5.5);
\draw[densely dashed] (-0.6,2.8) -- (-0.6,-5.5);
\draw (-4,-5) -- (0.5,-5);
\draw[thick] (-4,-5) -- (-1.8,-5);
\draw[thick] (-0.6,-4) -- (0,-4);
\draw[thick] (-1.3,-4.67) -- (-1.1,-4.34);
\draw[thick, color=blue] (-1.8,-5) -- (-0.6,-5);
\draw[thick,smooth] (-1.8,-5) to[out=0,in=-120] (-1.3,-4.67);
\draw[thick,smooth] (-1.1,-4.34) to[out=60,in=180] (-0.6,-4);
\draw[->] (0,-5.5) -- (0,-3.5) node [right,pos=0.85] {\small$\lambda^p_c$};
\draw (5,2.5) -- (5,-2.5);
\draw[densely dashed] (-5.5,0.4) -- (0,0.4);
\draw[densely dashed] (-5.5,-0.4) -- (0,-0.4);
\draw[densely dashed] (0,0.4) -- (5,1.5);
\draw[densely dashed] (0,-0.4) -- (5,-1.5);
\draw[thick,blue] (5,1.5) -- (5,-1.5);
\draw[<-] (3.5,0) -- node[right,pos=0.14,rotate=90] {\small$\eta_c$} (5.5,0);
\draw[thick] (5,2) -- (5,1.15);
\draw[thick] (4,0.2) -- (4,-0.2);
\draw[thick] (5,-2) -- (5,-1.15);
\draw[thick,smooth] (5,1.15) to[out=-90,in=90] (4,0.2);
\draw[thick,smooth] (5,-1.15) to[out=90,in=-90] (4,-0.2);
\end{tikzpicture}
\caption{The cutoff function~$\eta_c$ localizes (in the direction 
				 orthogonal to~$\mathcal{T}_c$) to the tubular neighborhood
				 $\Psi(\mathcal{T}_c{\times}(-\delta'\bar r,\delta'\bar r))$.
				 The weight~$\lambda^p_c$ is used to interpolate in the construction
				 of~$\xi_i$ between the building blocks
				 associated with the straight line segment~$\mathcal{T}_c$ or the triple 
				 junction~$\mathcal{T}_p$, respectively. After localization by means 
				 of~$\eta_c$, the interpolation is carried out in the blue region hatched 
				 from bottom left to top right.\label{fig:cutoffs}}
\end{figure}
We refer to Figure~\ref{fig:cutoffs} for a visualization of these auxiliary functions.
Note that as a consequence of the choice~\eqref{eq:defAuxLocScale}
of the auxiliary localization scale~$\delta' < \delta$, it follows 
from~\eqref{eq:supportPropertyByLocalization2} and the
definitions~\eqref{eq:localizationStraightLineSegment}--\eqref{eq:localizationTripleJunction}
that
\begin{align}
\label{eq:supportPropertyByLocalization3}
B_{\bar r}(\mathcal{T}_p) \cap \Psi_{\mathcal{L}_c}(\mathcal{T}_c{\times}(-\bar r,\bar r))
\cap \supp\big(\eta_c\lambda^p_c\big) \cap \supp\big(\eta_c(1{-}\lambda^p_c)\big)
\subset\subset B_{\bar r}(\mathcal{T}_p) \cap W^p_c.
\end{align}

We have everything in place to finally state the definition of the family of Lipschitz vector fields 
$(\xi_i)_{i\in\{1,\ldots,P\}}$. To this end, fix a phase $i\in\{1,\ldots,P\}$ and
consider a straight line segment $c\in\mathcal{C}$. Away from triple junctions and boundary endpoints
but in the vicinity of $\mathcal{T}_c$ we let
\begin{align}
\label{eq:defLocalCalibrationStraighLineSegment}
\xi_i(x) := \eta_c(x)\vv{\xi_i^c}, \quad\text{for } x \in \overline{D} 
\cap \Psi_{\mathcal{L}_c}\big(\mathcal{T}_c {\times} (-\delta\bar r, \delta \bar r)\big)
\setminus\bigcup_{n\in\mathcal{P}\cup\mathcal{B}} B_{\bar r}(\mathcal{T}_n).
\end{align}
Note that this definition is consistent across different straight line segments
thanks to the localization property~\eqref{eq:supportPropertyByLocalization1},
the definition~\eqref{eq:localizationStraightLineSegment} and $\delta'<\delta$
due to~\eqref{eq:defAuxLocScale}. Consider next a boundary endpoint $b\in\mathcal{B}$. We then define
\begin{align}
\label{eq:defLocalCalibrationBoundary}
\xi_i(x) := \eta_c(x)\vv{\xi_i^b}, \quad\text{for } x \in \overline{D} \cap B_{\bar r}(\mathcal{T}_b) 
\text{ and the unique } c\in\mathcal{C} \text{ with } c\sim b.
\end{align}
Finally, let $p\in\mathcal{P}$ be a triple junction. 
In this case, we define
\begin{align}
\label{eq:defLocalCalibrationTripleJunction}
\xi_i(x) := \eta_c(x) \Big( \lambda^p_c(x) \vv{\xi^p_i}
+ \big(1{-}\lambda^p_c(x)\big) \vv{\xi^c_i}\Big),
\, x \in B_{\bar r}(\mathcal{T}_p) \cap \overline{W^p_c},\,c\in\mathcal{C},\, c\sim p.
\end{align}
Because of~\eqref{eq:decompByWedges} this indeed defines the vector field $\xi_i$ in 
the whole ball $B_{\bar r}(\mathcal{T}_p)$. 

In view of~\eqref{eq:supportPropertyByLocalization1} as well as the 
definitions~\eqref{eq:defAuxLocScale}, \eqref{eq:localizationStraightLineSegment}
and~\eqref{eq:defLocalCalibrationStraighLineSegment}--\eqref{eq:defLocalCalibrationTripleJunction}
it is safe to define $\xi_i$ outside the dumbbell-type neighborhood $U^{\mathcal{I}}_{(\bar r,\delta)}$
from~\eqref{eq:dumbbellNbhdNetwork} of the network of interfaces~$\mathcal{I}$ 
by means of
\begin{align}
\label{eq:defLocalCalibrationBulk}
\xi_i(x) := 0, \quad x \in \overline{D} \setminus U^{\mathcal{I}}_{(\bar r,\delta)}.
\end{align}
We even observe that the definitions~\eqref{eq:defAuxLocScale}--\eqref{eq:localizationTripleJunction}
and~\eqref{eq:defLocalCalibrationStraighLineSegment}--\eqref{eq:defLocalCalibrationBulk}
in fact entail qualitative continuity of $\xi_i$ in $\overline{D}$ for each $i\in\{1,\ldots,P\}$
as a consequence of the properties~\eqref{eq:supportPropertyByLocalization1}--\eqref{eq:supportPropertyByLocalization2}
and~\eqref{eq:supportPropertyByLocalization3}.

\textit{Step 7: Properties of the vector fields $\xi_i$.} 
Since $\bar\chi$ is a regular flat partition of~$D$ in the sense of Definition~\ref{DefinitionFlatPartition}, not only
the distance function $x\mapsto\dist(x,\mathcal{T}_c)$ to~$\mathcal{T}_c$, but also the nearest point projection $x\mapsto P_{\mathcal{T}_c}(x)$ onto~$\mathcal{T}_c$ 
is Lipschitz for each straight line segment $c\in\mathcal{C}$. In particular,
all localization functions from~\eqref{eq:localizationStraightLineSegment}
and all weights from~\eqref{eq:localizationTripleJunction} are Lipschitz.
The Lipschitz property therefore lifts to $\xi_i$ for all $i\in\{1,\ldots,P\}$ because of the
definitions~\eqref{eq:defLocalCalibrationStraighLineSegment}--\eqref{eq:defLocalCalibrationBulk}.

Property~\textit{i)} of Definition~\ref{DefinitionLocalCalibration} is immediate from the
construction of the $\xi_i$ in the previous step. For all what follows in this step, fix phases $i,j\in\{1,\ldots,P\}$ 
with $i\neq j$. To verify the properties \textit{ii)}--\textit{iv)} from 
Definition~\ref{DefinitionLocalCalibration}, it suffices to check them in the 
dumbbell-type neighborhood $U^{\mathcal{I}}_{(\bar r,\delta)}$ of the network of interfaces $\mathcal{I}$. 

Let $c\in\mathcal{C}$ be a straight line segment. Away from triple junctions
and boundary endpoints but in the vicinity of $\mathcal{T}_c$, we have as a consequence
of the definition~\eqref{eq:defLocalCalibrationStraighLineSegment} the representation
\begin{align}
\label{eq:representationCalibrationVectors1}
\xi_i-\xi_j = \eta_c(\vv{\xi^c_i} - \vv{\xi^c_j})
\quad\text{in } \overline{D} \cap \Psi_{\mathcal{L}_c}\big(\mathcal{T}_c 
{\times} (-\delta\bar r,\delta\bar r)\big)
\setminus\bigcup_{n\in\mathcal{P}\cup\mathcal{B}} B_{\bar r}(\mathcal{T}_n).
\end{align}
Hence, we obtain in view of~\eqref{eq:calibrationProperty} and~\eqref{Length} that
\begin{equation}
\label{eq:representationCalibrationVectors1b}
\begin{aligned}
\xi_i-\xi_j &= \eta_c\sigma_{i,j}\bar{\vec{n}}_{i,j} && \text{if } \mathcal{T}_c \subset I_{i,j}, \\
|\xi_i - \xi_j| &\leq \eta_c\delta_1(\sigma)\sigma_{i,j} && \text{else},
\end{aligned}
\end{equation}
in $\overline{D} \cap \Psi_{\mathcal{L}_c}\big(\mathcal{T}_c 
{\times} (-\delta\bar r,\delta\bar r)\big)
\setminus\bigcup_{n\in\mathcal{P}\cup\mathcal{B}} B_{\bar r}(\mathcal{T}_n)$. 
The previous display immediately implies Property~\textit{ii)} within
$\overline{D} \cap \Psi_{\mathcal{L}_c}\big(\mathcal{T}_c 
{\times} (-\delta\bar r,\delta\bar r)\big)
\setminus\bigcup_{n\in\mathcal{P}\cup\mathcal{B}} B_{\bar r}(\mathcal{T}_n)$
due to the choice~\eqref{eq:localizationStraightLineSegment} of the cutoff~$\eta_c$
and the localization properties of the scales~$(\bar r,\delta)$.
For a proof of Property~\textit{iii)}, consider a point
$x \in \overline{D} \cap \Psi_{\mathcal{L}_c}\big(\mathcal{T}_c 
{\times} (-\delta\bar r,\delta\bar r)\big)
\setminus\bigcup_{n\in\mathcal{P}\cup\mathcal{B}} B_{\bar r}(\mathcal{T}_n)$
such that $|\frac{\xi_i(x)-\xi_j(x)}{\sigma_{i,j}}|>1-\delta_2(\sigma,\kappa)$.
Restricting $\delta_2(\sigma,\kappa)\in (0,1{-}\delta_1(\sigma))$, it follows
from~\eqref{eq:representationCalibrationVectors1b} that necessarily
$\mathcal{T}_c\subset I_{i,j} \neq \emptyset$. Hence, further restricting the constant to 
$\delta_2(\sigma,\kappa) < (1{-}\delta_1(\sigma)) \wedge \frac{\kappa}{2}$
then implies by~\eqref{eq:representationCalibrationVectors1b} and the assumption on~$x$ that 
$\eta_c(x) > 1 {-} \delta_2(\sigma) \geq 1 {-} \frac{\kappa}{2}$, and therefore
$|\frac{\xi_i(x)-\xi_j(x)}{\sigma_{i,j}} {-} \bar{\vec{n}}_{i,j}| = 1 - \eta_c(x) < \frac{\kappa}{2}$.
In other words, Property~\textit{iii)} holds true throughout 
$\overline{D} \cap \Psi_{\mathcal{L}_c}\big(\mathcal{T}_c 
{\times} (-\delta\bar r,\delta\bar r)\big)
\setminus\bigcup_{n\in\mathcal{P}\cup\mathcal{B}} B_{\bar r}(\mathcal{T}_n)$.
For a proof of Property~\textit{iv)}, we may in fact
simply choose $\delta_3(\sigma) \in (0, \frac{\delta'}{4})$
due to the following two observations (recall from~\eqref{eq:defAuxLocScale}
the definition of the scale~$\delta'$).
First, if $\mathcal{T}_c\not\subset I_{i,j}$ then
$\bar\Omega_j\cap \Psi_{\mathcal{L}_c}\big(\mathcal{T}_c 
{\times} (-\delta\bar r,\delta\bar r)\big)
\setminus\bigcup_{n\in\mathcal{P}\cup\mathcal{B}} B_{\bar r}(\mathcal{T}_n)
\subset \{\dist(\cdot,\mathcal{I}_i) > \delta\bar r\}$
because of~\eqref{eq:supportPropertyByLocalization1}, and nothing has to be proven in terms of Property~\textit{iv)}.
Second, if $\mathcal{T}_c\subset I_{i,j}$ it follows from~\eqref{eq:representationCalibrationVectors1b}
and the definition~\eqref{eq:localizationStraightLineSegment} of the localization function~$\eta_c$
that Property~\textit{iv)} holds true in $\bar\Omega_j \cap \{\dist(\cdot,\mathcal{I}_i) \leq \delta_3(\sigma)\bar r\}
\cap \Psi_{\mathcal{L}_c}\big(\mathcal{T}_c {\times} (-\delta\bar r,\delta\bar r)\big)
\setminus\bigcup_{n\in\mathcal{P}\cup\mathcal{B}} B_{\bar r}(\mathcal{T}_n)$.

Fix next a boundary endpoint $b\in\mathcal{B}$. By the properties
of the admissible pair of localization scales $(\bar r,\delta)$ from Definition~\ref{def:locRadius}
and the definition~\eqref{eq:defLocalCalibrationBoundary}, the same
argument as in the preceding discussion applies to the convex
region $\overline{D} \cap B_{\bar r}(\mathcal{T}_b)$.

We finally consider a triple junction $p\in\mathcal{P}$ and
convince ourselves of the validity of the properties~\textit{ii)}--\textit{iv)}
in the region~$\overline{D} \cap B_{\bar r}(\mathcal{T}_p)$.
Based on the decomposition~\eqref{eq:decompByWedges} 
of $B_{\bar r}(\mathcal{T}_p)$ into wedges $(W^p_c)_{c\in\mathcal{C},c\sim p}$,
let us further fix a straight line segment $c\in\mathcal{C}$ with $c\sim p$.
Plugging in the definition~\eqref{eq:defLocalCalibrationTripleJunction} we obtain
\begin{align}
\label{eq:representationCalibrationVectors2}
\xi_i {-} \xi_j = \eta_c \lambda^p_c (\vv{\xi^p_i} {-} \vv{\xi^p_j})
								  + \eta_c (1 {-} \lambda^p_c) (\vv{\xi^c_i} {-} \vv{\xi^c_j})
\quad\text{in } \overline{D} \cap B_{\bar r}(\mathcal{T}_p) \cap \overline{W^{p}_c}.
\end{align}
In particular, it follows based on~\eqref{eq:calibrationProperty} and~\eqref{Length} that, in $\overline{D} \cap B_{\bar r}(\mathcal{T}_p) \cap \overline{W^{p}_c}$,
\begin{equation}
\label{eq:representationCalibrationVectors2b}
\begin{aligned}
\xi_i-\xi_j &= \eta_c\sigma_{i,j}\bar{\vec{n}}_{i,j} && \text{if } \mathcal{T}_c \subset I_{i,j}
\,(\text{and thus } \mathcal{T}_p \subset I_{i,j}), \\
|\xi_i - \xi_j| &\leq \eta_c\lambda^p_c\sigma_{i,j} + \eta_c(1{-}\lambda^p_c)\delta_1(\sigma)\sigma_{i,j} 
&& \text{if } \mathcal{T}_c \not\subset I_{i,j}
\text{ but } \mathcal{T}_p \subset I_{i,j}, \\
|\xi_i - \xi_j| &\leq \eta_c\delta_1(\sigma)\sigma_{i,j} && \text{if } \mathcal{T}_p \not\subset I_{i,j}
\,(\text{and thus } \mathcal{T}_c \not\subset I_{i,j}).
\end{aligned}
\end{equation}
In combination with the properties of the admissible pair of localization scales 
$(\bar r,\delta)$ from Definition~\ref{def:locRadius}, the definition~\eqref{eq:localizationStraightLineSegment} 
of the cutoffs~$\eta_c$, and the definition~\eqref{eq:localizationTripleJunction} of the weights~$\lambda^p_c$,
the estimates from~\eqref{eq:representationCalibrationVectors2b} in turn 
immediately imply Property~\textit{ii)} in the region
$\overline{D} \cap B_{\bar r}(\mathcal{T}_p) \cap \overline{W^{p}_c}$.

For a proof of Property~\textit{iii)} within $\overline{D} \cap B_{\bar r}(\mathcal{T}_p) \cap \overline{W^{p}_c}$,
we may assume without loss of generality that $\mathcal{T}_c \not \subset I_{i,j}$ but 
$\mathcal{T}_p \subset I_{i,j}$. In the other two cases, either the first or the third line
of~\eqref{eq:representationCalibrationVectors2b} apply and the argument is the same as before.
Consider now a point $x \in \overline{D} \cap B_{\bar r}(\mathcal{T}_p) \cap \overline{W^{p}_c}$
such that $|\frac{\xi_i(x)-\xi_j(x)}{\sigma_{i,j}}|>1-\delta_2(\sigma,\kappa)$.
If $\lambda^p_c(x) \in [0,1{-}\frac{\kappa}{8})$, then by the assumption on~$x$ and 
the second line of~\eqref{eq:representationCalibrationVectors2b} it follows
$1{-}\delta_2(\sigma,\kappa) < \delta_1(\sigma) + \lambda^p_c(x)(1{-}\delta_1(\sigma))
< (1{-}\frac{\kappa}{8}) + \frac{\kappa}{8}\delta_1(\sigma)$. Hence, in view of $\delta_1(\sigma)\in (0,1)$
it is just a matter of choosing $\delta_2(\sigma,\kappa)$ small enough to obtain a contradiction. 
In particular, we are safe to assume that $\lambda^p_c(x)\in (1{-}\frac{\kappa}{8},1]$.
The assumption on~$x$ and the second line of~\eqref{eq:representationCalibrationVectors2b} then
trivially imply $1 {-} \eta_c(x) \leq \delta_2(\sigma,\kappa)$. 
Restricting if needed $\delta_2(\sigma,\kappa)\in (0,\frac{\kappa}{4})$
and noting that $\vv{\xi^p_i} {-} \vv{\xi^p_j} = \sigma_{i,j}\bar{\vec{n}}_{i,j}$
due to~\eqref{eq:calibrationProperty} and the assumption $\mathcal{T}_p\subset I_{i,j}$, 
it follows from~\eqref{eq:representationCalibrationVectors2} and~\eqref{Length} that
\begin{align*}
\Big|\frac{\xi_i(x) {-} \xi_j(x)}{\sigma_{i,j}} - \bar{\vec{n}}_{i,j}\Big|
&\leq \big(1{-}\eta_c(x)\big)\lambda^p_c(x) + 
\big(1{-}\lambda^p_c(x)\big)\bigg|\eta_c(x)\frac{\vv{\xi^c_i} {-} \vv{\xi^c_j}}{\sigma_{i,j}} - \bar{\vec{n}}_{i,j}\bigg|
\\&
\leq \frac{\kappa}{4} + \frac{\kappa}{4} = \frac{\kappa}{2}.
\end{align*}
This proves Property~\textit{iii)} in the region
$\overline{D} \cap B_{\bar r}(\mathcal{T}_p) \cap \overline{W^{p}_c}$. 

If $\mathcal{T}_c \subset I_{i,j}$ (and thus also $\mathcal{T}_p \subset I_{i,j}$),
the representation of $\xi_i - \xi_j$ from the first line of~\eqref{eq:representationCalibrationVectors2b}
allows to employ the same argument as above for a proof of Property~\textit{iv)}
in $\overline{D} \cap B_{\bar r}(\mathcal{T}_p) \cap \overline{W^{p}_c}$. 
If $\mathcal{T}_p \not\subset I_{i,j}$ (and thus also $\mathcal{T}_c \not\subset I_{i,j}$),
then necessarily $\bar\Omega_j \cap B_{\bar r}(\mathcal{T}_p) \cap \{\dist(\cdot, \mathcal{I}_i) < \bar r\} = \emptyset$
by the properties of the localization scale~$\bar r$ from Definition~\ref{def:locRadius}.
In particular, in this regime nothing has to be proven with respect to Property~\textit{iv)}.
Consider finally the regime $\mathcal{T}_p \subset I_{i,j}$ and $\mathcal{T}_c \not\subset I_{i,j}$.
We choose $\delta_3(\sigma)\in (0,\frac{\delta'}{4})$ small enough such that 
\begin{align}
\bar\Omega_j \cap B_{\bar r}(\mathcal{T}_p) \cap \overline{W^p_c}
\cap \{\dist(\cdot, \mathcal{I}_i) < \delta_3(\sigma)\bar r\} 
\cap \supp \big(\eta_c(1{-}\lambda^p_c)\big) = \emptyset.
\end{align}
This is always possible by the choice of the profile $\theta$ in \textit{Step 6} of this
proof and the definition~\eqref{eq:localizationTripleJunction} of the weight $\lambda^p_c$.
In particular, based on~\eqref{eq:representationCalibrationVectors2} we then have
\begin{align*}
\xi_i-\xi_j = \eta_c\sigma_{i,j}\bar{\vec{n}}_{i,j}
\end{align*}
in $\bar\Omega_j \cap B_{\bar r}(\mathcal{T}_p) \cap \overline{W^p_c}
\cap \{\dist(\cdot, \mathcal{I}_i) < \delta_3(\sigma)\bar r\}$,
so that in this region Property~\textit{iv)} holds true based on the same arguments as above.

This in turn eventually concludes the proof of Lemma~\ref{LemmaExistenceLocalCalibration}. \qed

\section{Flat partitions are local minimizers: Proof of Theorem~\ref{MainResult}}
Since the partitions $\chi$ and $\bar\chi$ have coinciding traces on $\partial D$,
the identity in \eqref{EnergyEstimate} simply reduces to
\begin{align}
\label{EnergyEstimateSameDirichletData}
E[\chi] =
E[\bar\chi] + E[\chi|\bar\chi]
+ \sum_{i,j=1,i\neq j}^P \int_{D} 2(\chi_i{-}\bar\chi_i) \bar\chi_j (\nabla\cdot\xi_i{-}\nabla\cdot\xi_j) \dx,
\end{align}
where we take the vector fields $(\xi_i)_{i\in\{1,\ldots,P\}}$ to be a local paired calibration in the sense of
Definition~\ref{DefinitionLocalCalibration}. Because of Lemma~\ref{LemmaExistenceLocalCalibration}, given
a regular flat partition $\bar\chi$ such a family of vector fields always exists, provided that the associated scales $(\bar r,\delta) \in (0,1]{\times}(0,\frac{1}{2}]$ 
are admissible in the sense of Definition~\ref{def:locRadius} for the regular flat partition $\bar\chi$ and that $B_{2\bar r}(\mathcal{T}_b) \cap D$ is convex for all $b\in \mathcal{B}$. The latter assumption will turn out to be of a technical nature, we will remove it in the proof of Theorem~\ref{MainResult} proper.

The idea now is to make use of the coercivity properties of the functional $E[\chi|\bar\chi]$ together with the
fact that the family of Lipschitz vector fields $(\xi_i)_{i\in\{1,\ldots,P\}}$ is a local paired calibration 
with respect to $(\bar r,\delta)$ in the precise sense of 
Definition~\ref{DefinitionLocalCalibration}. We aim to 
prove---at least for sufficiently small perturbations of the phases in $L^1$---that the last two terms in 
\eqref{EnergyEstimateSameDirichletData} combine to a non-negative quantity. To this end, we will proceed in several lemmas.

First, we want to exploit that the relative energy~$E[\chi|\bar\chi]$ gives control over 
the length of the interfaces of the partition~$\chi$ whenever the vector~$\frac{\xi_i-\xi_j}{\sigma_{i,j}}$
is short. For a local paired calibration this happens to be the case outside of a 
strip around the boundaries 
\begin{align}
\label{eq:defGrainBoundary}
\mathcal{I}_i:=\bigcup_{j=1}^PI_{i,j},\quad i\in\{1,\ldots,P\}.
\end{align} 
The contribution of the bulk term in \eqref{EnergyEstimateSameDirichletData} in that region may then be absorbed into 
the relative energy $E[\chi|\bar\chi]$ by an application of the relative isoperimetric inequality.

\begin{lemma}\label{lemma:control_shortness_constraint}
There exists $\bar s>0$ sufficiently small such that if $\chi = (\chi_1,\ldots, \chi_P)$ is a partition of D with finite surface energy satisfying $\max_{i=1,\ldots P} \| \chi_i - \bar \chi_i \|_{L^1(D)} \leq \bar s $ and trace coinciding with $\bar \chi$ we have
\begin{align}\label{ProofTheoremAux1}
E[\chi] &\geq E[\bar\chi] + \frac{1}{2} E[\chi|\bar\chi]
\\&~~~\nonumber
+\sum_{i,j=1,i\neq j}^P \int_{U^{\mathcal{I}_i}_{(\frac{\bar r}{2},\delta)}} 
2(\chi_i{-}\bar\chi_i) \bar\chi_j (\nabla\cdot\xi_i{-}\nabla\cdot\xi_j) \dx.
\end{align}
\end{lemma}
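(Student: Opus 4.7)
The plan is to start from the relative energy equality~\eqref{EnergyEstimateSameDirichletData} and absorb the part of the bulk term coming from the outer region $R_i := D \setminus U^{\mathcal{I}_i}_{(\bar r/2,\delta)}$ into $\tfrac{1}{2} E[\chi|\bar\chi]$, keeping the part over $U^{\mathcal{I}_i}_{(\bar r/2,\delta)}$ as-is. Since the calibration $\xi_1,\ldots,\xi_P$ is Lipschitz, one immediately obtains $|\nabla\cdot\xi_i - \nabla\cdot\xi_j| \leq C_0$, so using $\bar\chi_j \leq 1$ and summing over $(i,j)$, the remaining bulk is controlled by $2C_0 \sum_i \|\chi_i - \bar\chi_i\|_{L^1(R_i)}$. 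The problem thus reduces to proving the coercive bound
\begin{equation*}
\sum_{i=1}^P \|\chi_i - \bar\chi_i\|_{L^1(R_i)} \leq C\,\bar s^{1/2}\, E[\chi|\bar\chi],
\end{equation*}
which allows closing the argument by taking $\bar s$ small enough that $2C_0 C\,\bar s^{1/2} \leq 1/2$.

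To prove this coercive bound, the first step is to exploit the shortness of the calibration pairs on $R_i$: since $U^{I_{i,j}}_{(\bar r/2,\delta)} \subset U^{\mathcal{I}_i}_{(\bar r/2,\delta)}$, Definition~\ref{DefinitionLocalCalibration}~\textit{ii)} delivers $|\xi_i - \xi_j| \leq \delta_1 \sigma_{i,j}$ throughout $R_i$. Consequently the integrand in~\eqref{DefinitionRelativeEntropy} is bounded below by $1 - \delta_1 > 0$ on any interface $S_{i,j}$ meeting $R_i$, yielding the perimeter-type bound $E[\chi|\bar\chi] \geq c(\sigma) \sum_{j\neq i} \mathcal{H}^1(S_{i,j}\cap R_i)$. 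Next, one observes that $U^{\mathcal{I}_i}_{(\bar r/2,\delta)}$ contains a full tube $B_{\delta\bar r/2}(\mathcal{T}_c)$ around every straight segment of the grain boundary of $\bar\Omega_i$, so $\bar\chi_i$ is locally constant on each connected component $U$ of $R_i$; each such $U$ has Lipschitz boundary and uniform positive volume thanks to Definition~\ref{def:locRadius}~\textit{iii)}. For $\bar s$ small enough that $|\{\chi_i\neq\bar\chi_i\}\cap U| \leq |U|/2$, the planar relative isoperimetric inequality on each $U$ then produces
\begin{equation*}
\|\chi_i - \bar\chi_i\|_{L^1(U)} \leq C_U\, P(\{\chi_i\neq\bar\chi_i\}; U)^2 \leq C_1 \Bigl(\sum_{j\neq i} \mathcal{H}^1(S_{i,j}\cap U)\Bigr)^2,
\end{equation*}
where one uses that in $U$, the reduced boundary of $\{\chi_i \neq \bar\chi_i\}$ coincides with $\bigcup_{j\neq i} S_{i,j} \cap U$.

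The conclusion comes from interpolation. Writing $\|\chi_i - \bar\chi_i\|_{L^1(U)} = \|\chi_i - \bar\chi_i\|_{L^1(U)}^{1/2}\cdot \|\chi_i - \bar\chi_i\|_{L^1(U)}^{1/2}$, one factor is bounded by $\bar s^{1/2}$ (from the $L^1$-closeness assumption) and the other by $C_1^{1/2} \sum_{j\neq i} \mathcal{H}^1(S_{i,j}\cap U)$ (from the isoperimetric bound). Summing over the finitely many components $U$ of $R_i$ and invoking the perimeter-type lower bound on $E[\chi|\bar\chi]$ yields $\|\chi_i - \bar\chi_i\|_{L^1(R_i)} \leq C_2\, \bar s^{1/2}\, E[\chi|\bar\chi]$, and one further sum over $i$ produces the desired coercivity. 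The main obstacle is precisely the scaling mismatch: the bulk estimate is linear in $L^1$ but the isoperimetric control is only quadratic in perimeter; the $L^1$-smallness $\bar s \ll 1$ is what bridges the two, delivering a linear bound by $E[\chi|\bar\chi]$ with an arbitrarily small prefactor. A minor technical point is that components $U$ on which $\bar\chi_i \equiv 0$ are handled symmetrically by applying the isoperimetric inequality to $\{\chi_i = 1\} \cap U$.
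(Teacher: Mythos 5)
Your proposal is correct and follows essentially the same route as the paper's proof: split off the bulk term over the complement of $U^{\mathcal{I}_i}_{(\frac{\bar r}{2},\delta)}$, bound it via the Lipschitz bound on $\nabla\cdot\xi_i$ and the relative isoperimetric inequality (yielding the $\bar s^{1/2}$-times-perimeter interpolation), and convert the resulting perimeter into relative energy using the shortness property \textit{ii)} of Definition~\ref{DefinitionLocalCalibration} outside $U^{I_{i,j}}_{(\frac{\bar r}{2},\delta)}$, finally choosing $\bar s$ small to absorb into $\frac{1}{2}E[\chi|\bar\chi]$. The only cosmetic difference is that you apply the isoperimetric inequality componentwise on $D\setminus U^{\mathcal{I}_i}_{(\frac{\bar r}{2},\delta)}$ with a half-volume condition, whereas the paper applies Theorem~\ref{TheoremRelIsoperimetricInequ} to that set directly with its constant $\gamma$.
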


\begin{proof}
Let
	\begin{align}
\label{eq:farAway}
U_i^{\mathrm{bulk}} := D \setminus U^{\mathcal{I}_i}_{(\frac{\bar r}{2},\delta)},
\quad i\in\{1,\ldots,P\}.
\end{align}
Note first that $|\nabla(\chi_i{-}\bar\chi_i)|(\partial D)=0$ 
for all $i\in\{1,\ldots,P\}$ since by assumption $\chi_i$ 
and $\bar\chi_i$ have coinciding traces on~$\partial D$.
We then have $|\nabla(\chi_i{-}\bar\chi_i)|\mres U_i^{\mathrm{bulk}} = |\nabla\chi_i|\mres U_i^{\mathrm{bulk}}$.
Now, let $\sigma_{\mathrm{min}}:=\min_{i<j}\sigma_{i,j}>0$ and choose the scale $\bar s \in (0,1)$ small
enough such that 
\begin{align}
\label{smallnessCond1}
\bar s \leq \gamma(U_i^{\mathrm{bulk}})
\quad\text{for all phases } i\in\{1,\ldots,P\},
\end{align} 
where $\gamma(U_i^{\mathrm{bulk}})$ is the constant from the relative isoperimetric inequality, 
Theorem~\ref{TheoremRelIsoperimetricInequ}. 
Because of property~\textit{iii)} from Definition~\ref{def:locRadius} of an admissible localization radius,
we may indeed apply the relative isoperimetric inequality \eqref{RelIsoperimetricInequ} 
with the choices $\Omega = U_i^{\mathrm{bulk}}$ and 
$G=\supp (\chi_i{-}\bar\chi_i)\cap U_i^{\mathrm{bulk}}$ for all $i\in\{1,\ldots,P\}$ 
to obtain the bound
\begin{align*}
&\sum_{j=1,j\neq i}^P\int_{U_i^{\mathrm{bulk}}} 2(\chi_i{-}\bar\chi_i) 
\bar\chi_j (\nabla\cdot\xi_i{-}\nabla\cdot\xi_j) \dx
\\&
\geq -C_1\max_{i=1,\ldots,P} \|\xi_i\|_{W^{1,\infty}(D)}
\bar s^\frac{1}{2} \int_{U_i^{\mathrm{bulk}}} 1\,\mathrm{d}|\nabla\chi_i|
\\&
\geq -\frac{C_1}{\sigma_{\mathrm{min}}(1{-}\delta_1)}
\max_{i=1,\ldots,P} \|\xi_i\|_{W^{1,\infty}(D)} \bar s^\frac{1}{2}
\sum_{j=1,j\neq i}^P\sigma_{i,j}\int_{S_{i,j}\cap U_i^{\mathrm{bulk}}}
1-\frac{\xi_i{-}\xi_j}{\sigma_{i,j}}\cdot\vec{n}_{i,j}\,\mathrm{d}\mathcal{H}^1,
\end{align*}
where in the last step we used that 
$U_i^{\mathrm{bulk}} \subset \overline{D} \setminus U^{I_{i,j}}_{(\bar r,\delta)}$ 
for all distinct $i,j\in\{1,\ldots,P\}$ together with property~\textit{ii)} 
from Definition~\ref{DefinitionLocalCalibration} of a local paired calibration. 
Hence, by choosing $\bar s \in (0,1)$ sufficiently small such that
\begin{align} 
\label{smallnessCond2}
\frac{C_1}{\sigma_{\mathrm{min}}(1{-}\delta_1)}
\max_{i=1,\ldots,P} \|\xi_i\|_{W^{1,\infty}(D)} 
\bar s^\frac{1}{2} \leq \frac{1}{2},
\end{align}	
we deduce the claim, provided that the two smallness conditions~\eqref{smallnessCond1} and~\eqref{smallnessCond2} hold true.
\end{proof}

Therefore, we only have to consider the contributions in the last term of the estimate~\eqref{ProofTheoremAux1} over regions close to the corresponding boundaries.
The second lemma exploits the fact that the relative energy $E[\chi|\bar\chi]$ not only directly
penalizes the part of the interfaces of the partition~$\chi$ where the vector~$\frac{\xi_i-\xi_j}{\sigma_{i,j}}$
is short but also the part where this vector points into a direction sufficiently different from the one of 
the unit normal vector~$\vec{n}_{i,j}$.
If this happens ``too often'', we are able to prove the desired inequality $E[\chi]\geq E[\bar \chi]$ right away.

\begin{lemma}\label{lemma:tilt_excess_control}
	Under the assumptions of Lemma \ref{lemma:control_shortness_constraint}, let there exist $\eps_1, \eps_2 \in (0,1)$ such that we have
	\begin{align}
\label{AssumptionStep2}
\sum_{i,j=1,i\neq j}^P\sigma_{i,j}
\mathcal{H}^1\big( \big\{x\in S_{i,j} \colon \frac{\xi_i{-}\xi_j}{\sigma_{i,j}}
\cdot \vec{n}_{i,j}\leq 1{-}\varepsilon_1\big\}\big)
\geq \varepsilon_2 > 0.
\end{align}
	Then we can choose $\bar s> 0$ small enough depending on $\eps_1$ and $\eps_2$ such that we have $E[\chi]\geq E[\bar \chi]$.
\end{lemma}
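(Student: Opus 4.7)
The plan is to combine the estimate~\eqref{ProofTheoremAux1} from Lemma~\ref{lemma:control_shortness_constraint} with a lower bound on the relative energy $E[\chi|\bar\chi]$ extracted directly from the hypothesis~\eqref{AssumptionStep2}, and to absorb the residual bulk term by further shrinking~$\bar s$ in a way now depending on $\eps_1$ and $\eps_2$.

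First I would observe that, by property~\emph{ii)} of the local paired calibration (Definition~\ref{DefinitionLocalCalibration}), the integrand $1-\tfrac{\xi_i-\xi_j}{\sigma_{i,j}}\cdot\vec{n}_{i,j}$ appearing in the definition~\eqref{DefinitionRelativeEntropy} of $E[\chi|\bar\chi]$ is pointwise non-negative on each interface $S_{i,j}$, and on the subset featuring in~\eqref{AssumptionStep2} it is in addition bounded below by $\eps_1$. Restricting the integration in~\eqref{DefinitionRelativeEntropy} to this subset and inserting the hypothesis~\eqref{AssumptionStep2} immediately yields the quantitative lower bound $E[\chi|\bar\chi]\geq \eps_1\eps_2$.

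Next I would bound the bulk contribution in~\eqref{ProofTheoremAux1} in a crude way. Since the calibration vector fields provided by Lemma~\ref{LemmaExistenceLocalCalibration} are Lipschitz on~$\overline{D}$ with norms depending only on $\bar\chi$ and on the admissible scales $(\bar r,\delta)$, the differences $\nabla\cdot\xi_i-\nabla\cdot\xi_j$ are bounded in $L^\infty(D)$ by some constant $M=M(\bar\chi,\bar r,\delta)$. Combined with $|\chi_i-\bar\chi_i|\leq 1$, $|\bar\chi_j|\leq 1$, and the $L^1$-smallness assumption $\max_i\|\chi_i-\bar\chi_i\|_{L^1(D)}\leq\bar s$, this gives
\[
\biggl|\sum_{i,j=1,i\neq j}^{P}\int_{U^{\mathcal{I}_i}_{(\bar r/2,\delta)}}2(\chi_i{-}\bar\chi_i)\bar\chi_j(\nabla\cdot\xi_i{-}\nabla\cdot\xi_j)\dx\biggr|\leq C(\bar\chi,\bar r,\delta,P)\,\bar s.
\]
Plugging both estimates into~\eqref{ProofTheoremAux1} yields $E[\chi]\geq E[\bar\chi]+\tfrac{1}{2}\eps_1\eps_2 - C\bar s$, so choosing $\bar s$ smaller than $\eps_1\eps_2/(2C)$ (on top of the smallness already required by Lemma~\ref{lemma:control_shortness_constraint}) concludes the argument.

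I do not expect any serious obstacle for this lemma: the hypothesis~\eqref{AssumptionStep2} is tailored precisely to place us in the ``easy regime'' in which coercivity of the relative energy beats the first-order error in the bulk by brute force. The real difficulty of Theorem~\ref{MainResult} will lie in the complementary regime---where~\eqref{AssumptionStep2} fails for every admissible pair $\eps_1,\eps_2$---in which one must exploit the finer orientation constraint~\emph{iii)} and the flux constraint~\emph{iv)} of the local paired calibration to show that the bulk error can still be absorbed into the more delicate quadratic part of $E[\chi|\bar\chi]$ near the interfaces of~$\bar\chi$.
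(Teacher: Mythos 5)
Your proposal is correct and is essentially the paper's own argument in slightly rearranged form: the paper bounds the bulk term by $-C\bar s$ and then, using \eqref{AssumptionStep2}, rewrites this as $-\tfrac14$ times the relative energy restricted to the ``steep'' set, whereas you equivalently extract the lower bound $E[\chi|\bar\chi]\geq\eps_1\eps_2$ and absorb $C\bar s$ directly; both rest on the same two ingredients (the crude $O(\bar s)$ estimate of the bulk term via the Lipschitz bound on the calibration, and the coercivity of the relative energy granted by the hypothesis), combined with \eqref{ProofTheoremAux1}.
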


\begin{proof}
We estimate
\begin{align*}
&\sum_{i,j=1,i\neq j}^P \int_{U^{\mathcal{I}_i}_{(\bar r,\delta)}} 
2(\chi_i{-}\bar\chi_i) \bar\chi_j (\nabla\cdot\xi_i{-}\nabla\cdot\xi_j) \dx
\\&
\geq -C_2\max_{i=1,\ldots,P}
\|\xi_i\|_{W^{1,\infty}(D)} \bar s
\\&
\geq -\frac{C_2\sigma_{\mathrm{\min}}^{-1}}{\varepsilon_1\varepsilon_2}
\max_{i=1,\ldots,P} \|\xi_i\|_{W^{1,\infty}(D)} \bar s
\sum_{i,j=1,i\neq j}^P\sigma_{i,j}
\int_{\{x\in S_{i,j}\colon \frac{\xi_i{-}\xi_j}{\sigma_{i,j}}\cdot\vec{n}_{i,j}\leq 1{-}\varepsilon_1\}} 
\varepsilon_1 \,\mathrm{d}\mathcal{H}^1.
\end{align*}
Hence, choosing $\bar s \in (0,1)$ sufficiently small in the form of
\begin{align}
\label{smallnessCond3}
\frac{C_2\sigma_{\mathrm{\min}}^{-1}}{\varepsilon_1\varepsilon_2}
\max_{i=1,\ldots,P}\|\xi_i\|_{W^{1,\infty}(D)} \bar s 
\leq \frac{1}{4},
\end{align}
we obtain under the assumption of~\eqref{AssumptionStep2} and the smallness condition~\eqref{smallnessCond3}
\begin{equation}
\begin{aligned}
&\sum_{i,j=1,i\neq j}^P \int_{U^{\mathcal{I}_i}_{(\bar r,\delta)}} 
2(\chi_i{-}\bar\chi_i) \bar\chi_j (\nabla\cdot\xi_i{-}\nabla\cdot\xi_j) \dx
\\&\label{ProofTheoremAux2}
\geq -\frac{1}{4}\sum_{i,j=1,i\neq j}^P\sigma_{i,j}
\int_{\{x\in S_{i,j}\colon \frac{\xi_i{-}\xi_j}{\sigma_{i,j}}\cdot\vec{n}_{i,j}\leq 1{-}\varepsilon_1\}} 
1-\frac{\xi_i{-}\xi_j}{\sigma_{i,j}}\cdot\vec{n}_{i,j} \,\mathrm{d}\mathcal{H}^1.
\end{aligned}
\end{equation}
Combining this with the result of Lemma \ref{lemma:control_shortness_constraint}, we get $E[\chi]\geq E[\bar \chi]$.
\end{proof}

We are thus able to later \emph{choose} constants $\eps_1$ and $\eps_2$ to our liking and assume that \begin{align}
\sum_{i,j=1,i\neq j}^P \mathcal{H}^1\big(\big\{x\in S_{i,j}\colon \frac{\xi_i{-}\xi_j}{\sigma_{i,j}}
\cdot \vec{n}_{i,j}\leq 1{-}\varepsilon_1\big\}\big)
\leq \varepsilon_2,
\end{align}
i.e., that the interfaces of the competitors are not too ``steep'' with respect to those of the putative local minimizer.
Indeed, the rest of the argument will distinguish between such points on the inferfaces of competitors, those where the interface is essentially a somewhat flat graph over the regular partition, and the residual points where the interface is flat, but not a graph.

As a next step, we will work towards a definition of the set of ``flat graph'' points.
This will furthermore involve a slicing condition, which will need to be compatible with multiple different normals due to the presence of triple points.
The following lemma provides the appropriate geometric setup.

For any $i \in \{1,\ldots,P\}$, let $\mathcal{N}_i \subset \{1,\ldots,N\}$
denote the subset of all topological features at which phase~$i$ is present.
Conversely, given topological feature $n\in\mathcal{N}$, we will denote 
by $\mathcal{K}_n \subset \{1,\ldots,P\}$ those indices $k$ for which phase $k$ is present at~$\mathcal{T}_n$.

\begin{lemma}\label{lemma:geometry}
	 Let $i \in \{1,\ldots,P\}$ and $n \in \mathcal{N}_i$.
	 Then there exists $\nu_{n,i} \in \mathbb{S}^1$ such that for all $j\in \mathcal{K}_n \setminus \{i\}$ we have
\begin{align}\label{nu_p_pointing_in_strong_normal}
	\nu_{n,i} \cdot \bar{n}_{i,j} >0.
\end{align}
	For $n \in \mathcal{T}_c$, we have $\nu_{n,i} = \bar{n}_{i,j}$.
	
	Furthermore, under the assumptions of Lemma \ref{lemma:control_shortness_constraint} and for all sufficiently small $\eps_1$ depending only on $\sigma$ and $\bar \chi$ we have the following:
	For all $ x\in S_{i,j} \cap Q_n$ with
	\begin{align*}
		\Big(\frac{\xi_i {-} \xi_j}{\sigma_{i,j}}\cdot\vec{n}_{i,j}\Big)(x) > 1-\varepsilon_1
	\end{align*}
	we have
\begin{equation}
\begin{aligned}
\label{eq:coareaFactor}
 \nu_{n,i}\cdot\vec{n}_{i,j}(x) \geq c > 0,
\end{aligned}
\end{equation}
while for all $x\in  S_{l,m} \cap Q_{n}$ with  $l,m\in\{1,\ldots,P\},\,l\neq m,\, l \notin \mathcal{K}_n$ it holds
\begin{equation}
\begin{aligned}
 \label{eq:shortness}
\Big(\frac{\xi_l {-} \xi_m}{\sigma_{l,m}} \cdot \vec{n}_{l,m}\Big)(x) \leq 1 - \varepsilon_1.
\end{aligned}
\end{equation}
\end{lemma}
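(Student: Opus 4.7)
\textbf{Plan for Lemma~\ref{lemma:geometry}.} The proof splits naturally into the construction of $\nu_{n,i}$ and the verification of the two quantitative bounds; the constants are then tuned in the end.

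First, I construct $\nu_{n,i}$ by a case analysis on the type of topological feature $n$. If $n=c\in\mathcal{C}$ lies in $I_{i,j}$, then $\mathcal{K}_c\setminus\{i\}=\{j\}$ is a singleton and I simply set $\nu_{c,i}:=\bar{n}_{i,j}$, which matches the second claim of the lemma and trivially gives the strict inequality. The case $n=b\in\mathcal{B}$ is treated identically, since by Definition~\ref{DefinitionFlatPartition} only two phases meet at a boundary endpoint. The main case is a triple junction $n=p\in\mathcal{P}$ with three phases $i,j,k$ meeting at $\mathcal{T}_p$: the Herring condition~\eqref{HerringAngleCondition} together with the strict triangle inequality~\eqref{TriangleInequalitySurfaceTensions} forces the interior sector of $\bar\Omega_i$ at $\mathcal{T}_p$ to open with some angle $\theta_{p,i}\in(0,\pi)$, so the outward normals $\bar{n}_{i,j}(p)$ and $\bar{n}_{i,k}(p)$ enclose an angle $\pi-\theta_{p,i}<\pi$; I take $\nu_{p,i}$ to be their unit angular bisector, which satisfies $\nu_{p,i}\cdot\bar{n}_{i,j}(p)=\nu_{p,i}\cdot\bar{n}_{i,k}(p)=\sin(\theta_{p,i}/2)>0$. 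The constant $c_0:=\min_{p\in\mathcal{P},\,i\in\mathcal{K}_p}\sin(\theta_{p,i}/2)>0$ depends only on $\sigma$ and $\bar\chi$ and uniformizes the lower bound in~\eqref{nu_p_pointing_in_strong_normal}.

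The bound~\eqref{eq:coareaFactor} then follows from property~\textit{iii)} of the local paired calibration. Assuming $Q_n$ is chosen small enough to lie in $\{\dist(\cdot,I_{i,j})\leq 2\bar r\}$, the hypothesis $\frac{\xi_i-\xi_j}{\sigma_{i,j}}(x)\cdot\vec{n}_{i,j}(x)>1-\varepsilon_1$ together with the pointwise shortness $|\xi_i-\xi_j|\leq\sigma_{i,j}$ from Definition~\ref{DefinitionLocalCalibration}~\textit{ii)} forces both $\bigl|\tfrac{\xi_i-\xi_j}{\sigma_{i,j}}(x)\bigr|>1-\varepsilon_1$ and, by expanding $|\vec{n}_{i,j}(x)-\tfrac{\xi_i-\xi_j}{\sigma_{i,j}}(x)|^2\leq 2-2(1-\varepsilon_1)=2\varepsilon_1$, the estimate $\bigl|\vec{n}_{i,j}(x)-\tfrac{\xi_i-\xi_j}{\sigma_{i,j}}(x)\bigr|\leq\sqrt{2\varepsilon_1}$. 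Choosing $\varepsilon_1\leq\delta_2(\sigma,c_0/4)$ with the constant furnished by Definition~\ref{DefinitionLocalCalibration}~\textit{iii)} applied with $\kappa=c_0/4$, one gets $\bigl|\tfrac{\xi_i-\xi_j}{\sigma_{i,j}}(x)-\bar{n}_{i,j}\bigr|\leq c_0/4$, hence by the triangle inequality $\nu_{n,i}\cdot\vec{n}_{i,j}(x)\geq c_0-c_0/4-\sqrt{2\varepsilon_1}$. Requiring in addition $\sqrt{2\varepsilon_1}\leq c_0/4$ then yields~\eqref{eq:coareaFactor} with $c:=c_0/2$.

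For~\eqref{eq:shortness} I exploit property~\textit{ii)} of the local paired calibration in its shortness regime. The key geometric observation is that whenever $l\notin\mathcal{K}_n$ the phase $l$ is not present at $\mathcal{T}_n$, so $\mathcal{T}_n$ can be neither an endpoint (triple junction or boundary endpoint) of $I_{l,m}$ nor an endpoint of any segment in $\mathcal{C}_{l,m}$. A careful but routine check using the disjointness built into Definition~\ref{def:locRadius}~\textit{i)}–\textit{ii)}, together with the tube/ball structure of the dumbbell neighbourhoods~\eqref{eq:dumbbellNbhdInterface}, shows that a sufficiently small $Q_n$ (inside $B_{\bar r}(\mathcal{T}_n)$ for $n\in\mathcal{P}\cup\mathcal{B}$, inside the tube $B_{\delta\bar r}(\mathcal{T}_c)\setminus\bigcup_{n'\in\mathcal{P}\cup\mathcal{B}}B_{\bar r}(\mathcal{T}_{n'})$ for $n=c\in\mathcal{C}$) is disjoint from $U^{I_{l,m}}_{(\bar r/2,\delta)}$. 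Definition~\ref{DefinitionLocalCalibration}~\textit{ii)} therefore gives $|\xi_l-\xi_m|\leq\delta_1\sigma_{l,m}$ on $Q_n$, so that $\tfrac{\xi_l-\xi_m}{\sigma_{l,m}}\cdot\vec{n}_{l,m}(x)\leq\delta_1\leq 1-\varepsilon_1$ as soon as $\varepsilon_1\leq 1-\delta_1$.

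The main obstacle is the simultaneous bookkeeping of constants: $\varepsilon_1$ must verify $\varepsilon_1\leq\delta_2(\sigma,c_0/4)$, $\varepsilon_1\leq c_0^2/32$, and $\varepsilon_1\leq 1-\delta_1$. All three thresholds are determined by $\sigma$ and $\bar\chi$ alone, so a single such choice is possible; the geometric input needed to secure $c_0>0$ is exactly the strict triangle inequality for $\sigma$, which was also the driving force behind the construction of the calibration in Lemma~\ref{LemmaExistenceLocalCalibration}.
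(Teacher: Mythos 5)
Your construction and estimates follow the paper's proof very closely: for straight line segments you take $\nu_{c,i}=\bar{n}_{i,j}$, at triple junctions you take the angular bisector pointing away from the sector of phase $i$ (this is exactly the paper's ``bisector of the cone with interior normals $\bar n_{i,j},\bar n_{i,l}$'', with the strict triangle inequality guaranteeing a positive opening angle), the bound \eqref{eq:coareaFactor} is obtained from property~\textit{iii)} of Definition~\ref{DefinitionLocalCalibration} combined with the elementary expansion $|\vec{n}_{i,j}-\tfrac{\xi_i-\xi_j}{\sigma_{i,j}}|^2\leq 2\varepsilon_1$, and \eqref{eq:shortness} is obtained from the $\delta_1$-shortness in property~\textit{ii)} away from $U^{I_{l,m}}_{(\bar r/2,\delta)}$ together with the localization properties of Definition~\ref{def:locRadius}; this is the same argument, at the same level of detail, as in the paper. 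The one genuine deviation is the boundary endpoint case $b\in\mathcal{B}$: you set $\nu_{b,i}=\bar n_{i,j}$, whereas the paper sets $\nu_{b,i}=\tau_{\partial D}(\mathcal{T}_b)$, the tangent of $\partial D$ with $\tau_{\partial D}\cdot\bar n_{i,j}>0$. As far as the letter of the lemma is concerned your choice is admissible (the statement only prescribes $\nu_{n,i}$ on segments and requires \eqref{nu_p_pointing_in_strong_normal}, \eqref{eq:coareaFactor}, \eqref{eq:shortness}, all of which your choice satisfies by the same reasoning), but be aware that the tangential choice is not cosmetic: $\nu_{b,i}$ is fed into $\Psi_{b,i}$, $Q_{b,i}$ and the sets $S^{\mathrm{flat}}_{i,j,b}$, and the later slicing arguments at boundary endpoints (Lemmas~\ref{lemma:flat_close_to_interface} and~\ref{lemma:short_inside_strip}, which explicitly invoke the local convexity of $B_{2\bar r}(\mathcal{T}_b)\cap D$) are designed around slices running parallel to $\partial D$. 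With $\nu_{b,i}=\bar n_{i,j}$, part of the base line $\mathcal{L}_{b,i}$ (the prolongation of $I_{i,j}$ beyond $\mathcal{T}_b$) lies outside $D$ and slices can leave the domain after arbitrarily short lengths, so the quantitative $L^1$-error lower bound near $\mathcal{T}_b$ in Lemma~\ref{lemma:flat_close_to_interface} is no longer guaranteed by the same argument. So your proof is correct for this lemma, but if you intend it as a drop-in replacement within the proof of Theorem~\ref{MainResult}, you should adopt the tangential choice at boundary endpoints (or re-derive the boundary steps of the subsequent lemmas for your choice).
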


\begin{proof}
\textit{Step 1: Construction of $\nu_{n,i}$ such that property \eqref{nu_p_pointing_in_strong_normal} holds.}
For $c\in\mathcal{N}_i\cap\mathcal{C}$, we define $\nu_{c,i} := \bar{\vec{n}}_{i,j}|_{\mathcal{T}_c}$.

For $b \in \mathcal{N}_i\cap\mathcal{B}$ we consider the unique $j \in \mathcal{K}_b\setminus\{i\}$.
We set $\nu_{b,i} := \tau_{\partial D}(\mathcal{T}_b)$
where $\tau_{\partial D}(\mathcal{T}_b) \in \mathbb{S}^1$
is the unique unit-length tangent vector of $\partial D$ at $\mathcal{T}_b$
such that $\bar{\vec{n}}_{i,j}(\mathcal{T}_b) \cdot \tau_{\partial D}(\mathcal{T}_b) > 0$, 
see Figure~\ref{fig:slicing_normal_boundary}.

Consider finally the case of a triple junction $p \in \mathcal{N}_i\cap\mathcal{P}$.
Let $j,l \in \mathcal{K}_p\setminus\{i\}$ be the two unique distinct phases being also
present at~$\mathcal{T}_p$.
Denote by $\nu_{p,i} \in \mathbb{S}^1$ the unique 
unit-length vector bisecting the cone with interior normals given by $\bar{\vec{n}}_{i,j}|_{\mathcal{T}_p}$
and $\bar{\vec{n}}_{i,l}|_{\mathcal{T}_p}$, see Figure~\ref{fig:slicing_normal_triple}.
By the strict triangle inquality \eqref{TriangleInequalitySurfaceTensions} satisfied by $\sigma$, the opening angle of its completent is strictly positive.
Since $\nu_{p,i}$ halves the angle between $\bar{\vec{n}}_{i,j}|_{\mathcal{T}_p}$
and $\bar{\vec{n}}_{i,l}|_{\mathcal{T}_p}$, we therefore also in this case have $\nu_{p,i}  \cdot \bar{\vec{n}}_{i,j}|_{\mathcal{T}_p}>0$ and $\nu_{p,i}  \cdot \bar{\vec{n}}_{i,l}|_{\mathcal{T}_p}>0$.

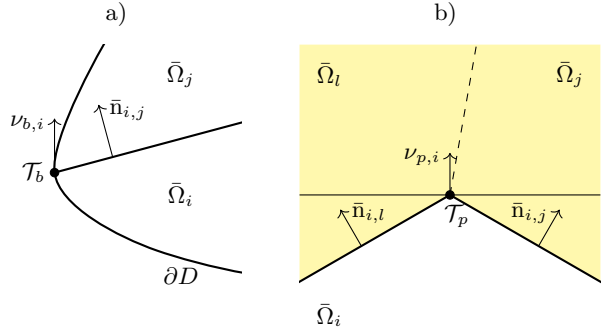
\begin{figure}[t]
\centering
\subcaptionbox{\label{fig:slicing_normal_boundary}}{
\centering
\begin{tikzpicture}[scale=.8]
	\clip (-4,-3.2)--(-4,1) -- (0,1) -- (0,-3.2) -- (-4,-3.2);
	\draw[thick] plot [smooth cycle] coordinates {(0:3)(40:4)(90:3.5)(140:2.5)(200:3.3)(240:2.8)(300:3.2)};
	\draw[fill] (200:3.3) circle (2pt) node[left] {\small$\;\;\mathcal{T}_b$};
	\draw[<-] ($(200:3.3) + (0,.9)$) -- (200:3.3) node[left, pos=0.1] {\small$\nu_{b,i}$};
	\draw[thick] (200:3.3) -- ($(200:3.3) + (15:4)$);
	\draw (-1,-1.5) node {\small$\bar\Omega_i$};
	\draw (-1,.5) node {\small$\bar\Omega_j$};
	\draw (-1,-2.85) node {\small$\partial D$};
	\draw[<-]  ($(200:3.3) + (15:1) + (105:.9)$) --($(200:3.3) + (15:1) $) node[right, pos=0.1] {\small$\bar{\vec{n}}_{i,j}$};
\end{tikzpicture}
}
\subcaptionbox{\label{fig:slicing_normal_triple}}{
\centering
\begin{tikzpicture}[scale=.8]
\draw[name path = interfaceA, draw = none, rotate = -30] (0,0) -- (3.5,0);
\draw[name path = interfaceB, draw = none, rotate = 210] (0,0) -- (3.5,0);
\draw[name path = box, draw = none] (-2.5,-2.5) rectangle (2.5,2.5); 
\path[name intersections={of=interfaceA and box, by={X}}];
\path[name intersections={of=interfaceB and box, by={Y}}];
\fill[yellow!40] (0,0) -- (X) -- (2.5,2.5) -- (-2.5,2.5) -- (Y) -- cycle;
\clip(-2.5,-2.5) rectangle (2.5,2.5); 
\draw[fill] (0,0) circle (2pt);
\draw[thick, rotate = -30] (0,0) -- (3.5,0) node[below, pos=0] {\small$\;\;\mathcal{T}_p$};
\draw[dashed, rotate = 80] (0,0) -- (3.2,0);
\draw[thick, rotate = 210] (0,0) -- (3.7,0);
\draw (-2,-2) node {\small$\bar\Omega_i$};
\draw (2,2) node {\small$\bar\Omega_j$};
\draw (-2,2) node {\small$\bar\Omega_l$};
\draw (-2.5,0) -- (2.5,0);
\draw[<-] (0,.7) -- (0,0) node[left, pos=0.1] {\small$\nu_{p,i}$};
\draw[<-] ($(210:1.7) + (120:0.7)$) -- (210:1.7) node[right, pos=0.1] {\small$\bar{\vec{n}}_{i,l}$};
\draw[<-] ($(-30:1.7) + (60:0.7)$) -- (-30:1.7) node[left, pos=0.1] {\small$\bar{\vec{n}}_{i,j}$};
\end{tikzpicture}
}
\caption{An illustration of the slicing normals in the cases a) 
a boundary point $b$ and b) a triple junction $p$. The 
yellow region is the cone with interior normals given by $\bar{\vec{n}}_{i,j}|_{\mathcal{T}_p}$
and $\bar{\vec{n}}_{i,l}|_{\mathcal{T}_p}$, 
coinciding locally with $\bar \Omega_l \cup \bar \Omega_j$.}
\label{fig:slicing_normals}
\end{figure}

\textit{Step 2: Choice of $\eps_1$.}
By property \eqref{nu_p_pointing_in_strong_normal}, we may choose constants $c,c' \in (0,1)$ such that for any $v \in \mathbb{S}^1$
the condition $\bar{\vec{n}}_{i,j}|_{\mathcal{T}_n} \cdot v \geq 1 - c'$
implies $\nu_{n,i} \cdot v \geq c$.
Then, we choose $\varepsilon_1$ sufficiently small
in the sense that $\varepsilon_1 < \min\{\smash{\frac{c'}{2}},\delta_2(\sigma,\smash{\frac{c'}{2}})\}$,
where $\delta_2(\sigma,\smash{\frac{c'}{2}})$ is the constant from property~\textit{iii)}
of Definition~\ref{DefinitionLocalCalibration} of a local paired calibration.
Hence, observing $\bar{\vec{n}}_{i,j}|_{\mathcal{T}_n} \cdot v \geq 
-|\smash{\frac{\xi_{i} {-} \xi_{j}}{\sigma_{i,j}}} {-} \bar{\vec{n}}_{i,j}|_{\mathcal{T}_n}|
+ \smash{\frac{\xi_{i} {-} \xi_{j}}{\sigma_{i,j}}} \cdot v$, it follows
from the above choices of the constants $c,c'$ and $\varepsilon$ as well
as from property~\textit{iii)} of Definition~\ref{DefinitionLocalCalibration} 
of a local paired calibration that
\begin{align*}
&\forall x\in S_{i,j} \cap Q_n\colon
\Big(\frac{\xi_i {-} \xi_j}{\sigma_{i,j}}\cdot\vec{n}_{i,j}\Big)(x) > 1-\varepsilon_1
\quad\Longrightarrow\quad \nu_{n,i}\cdot\vec{n}_{i,j}(x) \geq c > 0.
\end{align*}

By possibly reducing $\varepsilon_1 \in (0,1)$ even further
(this time depending only on the constant $\delta_1$ from Definition~\ref{DefinitionLocalCalibration}
of a local paired calibration), we may on top guarantee
\begin{align*}
&\forall x\in  S_{l,m} \cap Q_{n}, 
\quad l,m\in\{1,\ldots,P\},\,l\neq m,\, l \notin \mathcal{K}_n
\\&
\quad\Longrightarrow\quad
\Big(\frac{\xi_l {-} \xi_m}{\sigma_{l,m}} \cdot \vec{n}_{l,m}\Big)(x) \leq 1 - \varepsilon_1
\end{align*}
due to~\eqref{eq:auxPropertyCubes100}, the localization
properties of an admissible pair of localization scales in the sense
of Definition~\ref{def:locRadius}, as well as property~\textit{ii)}
from Definition~\ref{DefinitionLocalCalibration} of a local paired calibration.
\end{proof}

For each $i\in \{1,\ldots,P\}$ and each $n\in \mathcal{N}_i$ we are now in a position to define the change of variables $\Psi_{n,i} : \mathcal{L}_{n,i} \times \mathbb{R} \to \mathbb{R}^2$ via
\begin{align}
	\Psi_{n,i}(x',y) := x'  + y \nu_{n,i},
\end{align}
where $\mathcal{L}_{n,i}$ is the unique line normal to $\nu_{n,i}$ and passing through $\mathcal{T}_n$.
By $\mathcal{P}_{n,i}$ we denote the orthogonal projection onto $\mathcal{L}_{n,i}$.

For each $n \in \mathcal{N}_i$, we further define a set of auxiliary neighborhoods 
on the localization scales $(\bar r,\delta)$ by means of
\begin{align}
\label{eq:defNeighborhoodTopFeatures}
U_{n} := 
\begin{cases}
B_{\bar r}(\mathcal{T}_n) \cap D
& \text{if } n \in \mathcal{N}_i \cap (\mathcal{P} \cup \mathcal{B}), \\
B_{\delta\bar r}(\mathcal{T}_n) \setminus \bigcup_{n' \in \mathcal{P} \cup \mathcal{B} : \,n' \sim n} 
B_{\bar r}(\mathcal{T}_{n'}) & \text{if } n \in \mathcal{N}_i \cap \mathcal{C}.
\end{cases}
\end{align}
and
\begin{align}
\label{eq:defNeighborhoodCuboid}
Q_{n,i} := 
\begin{cases}
\Psi_{n,i}\big((B_{\sqrt{2}\bar r}(\mathcal{T}_n) \cap \mathcal{L}_{n,i})
{\times}(-\sqrt{2}\bar r,\sqrt{2}\bar r)\big)
& \text{if } n \in \mathcal{N}_i \cap ( \mathcal{P} \cup \mathcal{B} ), \\
\Psi_{n,i}\big(P_{\mathcal{L}_{n,i}}(U_{n})
{\times}(-\delta\bar r,\delta\bar r)\big)
& \text{if } n \in \mathcal{N}_i \cap \mathcal{C}.
\end{cases}
\end{align}
We refer to Figure~\ref{fig:nbhdSlices} for a visualization 
of the neighborhoods~$U_n$ and~$Q_n$,
respectively.
Note that this allows to represent~\eqref{eq:dumbbellNbhdPhase} in the form of
\begin{align}
\label{eq:decompNeighborhoodGrainBoundaryByTopFeatures}
U^{\mathcal{I}_i}_{(\bar r,\delta)} \cap D  = \bigcup_{n\in\mathcal{N}_{i}} U_n.
\end{align}
Furthermore, we have $U_{n}\subset Q_{n,i}$ and, due to $\delta\in (0,\frac{1}{2}]$, that
\begin{align}
\label{eq:auxPropertyCubes100}
Q_{c,i} \cap B_{\frac{\bar{r}}{2}}(\mathcal{T}_p) = \emptyset
\quad\text{if } c\in\mathcal{N}_i\cap\mathcal{C},\,c\sim p\in\mathcal{P}.
\end{align}

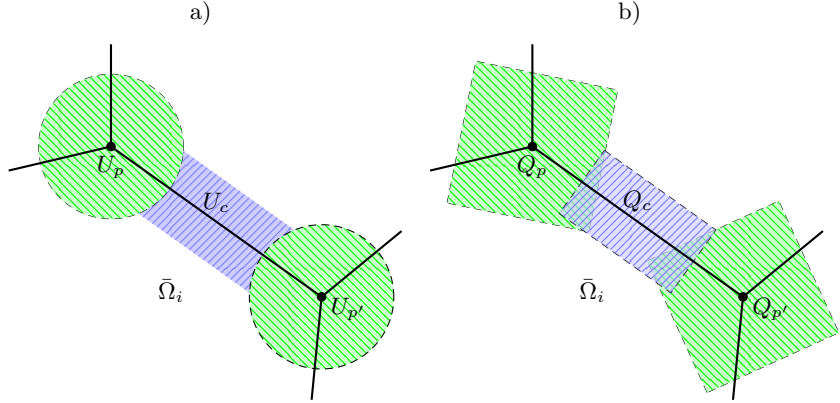
\begin{figure}[tbhp]
\centering
\subcaptionbox{\label{fig:nbhdSlicesA}}{
\centering
\begin{tikzpicture}[scale=.8]
\draw[name path = circle1, draw = none] (-3.5,2.5) circle (1.2cm);	
\draw[name path = circle2, draw = none] (0,0) circle (1.2cm);
\draw[draw=none] (-3.5,2.5) -- node[sloped,inner sep=0cm,above,pos=0,anchor=south west,
																minimum height=.5cm,minimum width=.5cm]	(NW) {}
													 node[sloped,inner sep=0cm,above,pos=1,anchor=south west,
																minimum height=.5cm,minimum width=.5cm]	(NE) {}
													 node[sloped,inner sep=0cm,below,pos=0,anchor=north west,
																minimum height=.5cm,minimum width=.5cm]	(SW) {}
													 node[sloped,inner sep=0cm,below,pos=1,anchor=north west,
																minimum height=.5cm,minimum width=.5cm]	(SE) {} (0,0);
\fill[blue!20] 
			(NW.north west) -- (NE.north west) -- (SE.south west) -- (SW.south west) -- cycle;
\fill[pattern = north east lines, pattern color = blue!50] 
			(NW.north west) -- (NE.north west) -- (SE.south west) -- (SW.south west) -- cycle;	
\fill[green!20] (0,0) circle (1.2cm);
\fill[pattern=north west lines,pattern color = green] (0,0) circle (1.2cm);
\draw[densely dashed] (0,0) circle (1.2cm);
\draw[fill] (0,0) circle (2pt);
\draw (0,-0.2) node[right] {\small$U_{p'}$};
\draw[densely dashed] (-3.5,2.5) circle (1.2cm);
\fill[green!20] (-3.5,2.5) circle (1.2cm);
\fill[pattern=north west lines,pattern color = green] (-3.5,2.5) circle (1.2cm);
\draw[fill] (-3.5,2.5) circle (2pt);
\draw (-3.5,2.5) node[below] {\small$U_{p}$};
\draw[thick] (-3.5,2.5) -- node[midway, above] {\small$U_c$} (0,0);
\draw[thick,rotate=120,scale=.4] (0,0) -- (-3.5,2.5);
\draw[thick,rotate=255,scale=.4] (0,0) -- (-3.5,2.5);
\draw[thick] (-3.5,2.5) -- (-3.5,4.2);
\draw[thick] (-3.5,2.5) -- (-5.2,2.1);
\draw (-2.5,0.1) node {\small$\bar\Omega_i$};
\end{tikzpicture}
}
\subcaptionbox{\label{fig:nbhdSlicesB}}{
\centering
\begin{tikzpicture}[scale=.8]
\draw[name path = circle1, draw = none] (-3.5,2.5) circle (1.2cm);	
\draw[name path = circle2, draw = none] (0,0) circle (1.2cm);
\draw[draw=none] (-3.5,2.5) -- node[sloped,inner sep=0cm,above,pos=0,anchor=south west,
																minimum height=.5cm,minimum width=.5cm]	(NW) {}
													 node[sloped,inner sep=0cm,above,pos=1,anchor=south west,
																minimum height=.5cm,minimum width=.5cm]	(NE) {}
													 node[sloped,inner sep=0cm,below,pos=0,anchor=north west,
																minimum height=.5cm,minimum width=.5cm]	(SW) {}
													 node[sloped,inner sep=0cm,below,pos=1,anchor=north west,
																minimum height=.5cm,minimum width=.5cm]	(SE) {} (0,0);
\draw[name path = auxRectangle, draw = none] 
			(NW.north west) -- (NE.north west) -- (SE.south west) -- (SW.south west) -- cycle;
\draw[densely dashed, rotate=24.5] (-1.2,-1.2) rectangle (1.2,1.2);
\fill[green!20,rotate=24.5] (-1.2,-1.2) rectangle (1.2,1.2);
\fill[pattern=north west lines,pattern color = green,rotate=24.5] (-1.2,-1.2) rectangle (1.2,1.2);
\draw[fill] (0,0) circle (2pt);
\draw (0,-0.2) node[right] {\small$Q_{p'}$};
\draw[shift={(-3.5,2.5)}, densely dashed, rotate=-11.5] (-1.2,-1.2) rectangle (1.2,1.2);
\fill[shift={(-3.5,2.5)}, green!20,rotate=-11.5] (-1.2,-1.2) rectangle (1.2,1.2);
\fill[shift={(-3.5,2.5)}, pattern=north west lines,pattern color = green,rotate=-11.5] 
	(-1.2,-1.2) rectangle (1.2,1.2);
\draw[fill] (-3.5,2.5) circle (2pt);
\draw (-3.5,2.5) node[below] {\small$Q_p$};
\path[name intersections={of=circle1 and auxRectangle, by={NW1,SW1}},
			name intersections={of=circle2 and auxRectangle, by={NE1,SE1}}];
\draw[densely dashed]	(NW1) -- (NE1) -- (SE1) -- (SW1) -- cycle;
\fill[blue!20,opacity=0.6]	(NW1) -- (NE1) -- (SE1) -- (SW1) -- cycle;
\fill[pattern = north east lines, pattern color = blue!50]	(NW1) -- (NE1) -- (SE1) -- (SW1) -- cycle;
\draw[thick] (-3.5,2.5) -- node[midway, above] {\small$Q_c$} (0,0);
\draw[thick,rotate=120,scale=.4] (0,0) -- (-3.5,2.5);
\draw[thick,rotate=255,scale=.4] (0,0) -- (-3.5,2.5);
\draw[thick] (-3.5,2.5) -- (-3.5,4.2);
\draw[thick] (-3.5,2.5) -- (-5.2,2.1);
\draw (-2.5,0.1) node {\small$\bar\Omega_i$};
\end{tikzpicture}
}

\caption{An illustration of the neighborhoods~$U_n$ and~$Q_n$, $n\in\{c,p,p'\}\subset\mathcal{N}_i$, 
in the setting of a straight line segment~$\mathcal{T}_c$ connecting two distinct triple 
junctions~$\mathcal{T}_p$ and~$\mathcal{T}_{p'}$. \label{fig:nbhdSlices}}
\end{figure}

\begin{figure}
\centering
\subcaptionbox{\label{fig:flatGraphA}}{
\centering
\begin{tikzpicture}[scale=.8]
\draw[name path = interfaceA, draw = none, rotate = -30] (0,0) -- (3.5,0);
\draw[name path = interfaceB, draw = none, rotate = 210] (0,0) -- (3.5,0);
\draw[name path = box, draw = none] (-2.5,-2.5) rectangle (2.5,2.5); 
\path[name intersections={of=interfaceA and box, by={X}}];
\path[name intersections={of=interfaceB and box, by={Y}}];
\fill[yellow!40] (0,0) -- (X) -- (2.5,2.5) -- (-2.5,2.5) -- (Y) -- cycle;
\fill[green!40, opacity=0.6] (-0.9,2.5) rectangle (1.6,-2.5); 
\fill[pattern = vertical lines, pattern color = green] (-0.9,2.5) rectangle (1.6,-2.5);
\draw[fill, blue!80] (0.8,1.5) circle (1.5pt);
\draw[blue!80, thick] (0.8,1.5) -- (1.6,1) node[right] {\tiny$S_{i,j}$};
\draw[blue!80, thick] (0.8,1.5) -- (-0.9,0.3) node[left] {\tiny$S_{l,i}$};
\draw[blue!80, thick] (0.8,1.5) -- (1,2.5) node[above] {\tiny$S_{j,l}$};
\draw[densely dashed, thick] (-2.5,-2.5) rectangle (2.5,2.5); 
\draw[fill] (0,0) circle (2pt);
\draw[thick, rotate = -30] (0,0) -- (3.5,0) node[below, pos=0] {\small$\;\;\mathcal{T}_p$};
\draw[thick, rotate = 80] (0,0) -- (3.2,0);
\draw[thick, rotate = 210] (0,0) -- (3.7,0);
\draw (-2,-2) node {\small$\bar\Omega_i$};
\draw (2,2) node {\small$\bar\Omega_j$};
\draw (-2,2) node {\small$\bar\Omega_l$};
\draw (-3.5,0) -- (3.5,0) node[above, pos=0.08] {\small$\mathcal{L}_{p,i}$};
\draw[<-] (-0.9,3.3) -- (-0.9,0);
\draw[densely dashed] (-0.9,0) -- (-0.9,-3.3);
\draw[fill] (-0.9,0) circle (2pt) node[below] {\tiny$x'_1$};
\draw[<-] (1.6,3.3) -- (1.6,0) node[right, pos=0.1] {\small$\nu_{p,i}$};
\draw[densely dashed] (1.6,0) -- (1.6,-3.3);
\draw[fill] (1.6,0) circle (2pt) node[below] {\tiny$x'_2$};
\end{tikzpicture}
}
\subcaptionbox{\label{fig:flatGraphB}}{
\centering
\begin{tikzpicture}[scale=.8]
\fill[yellow!40] (2.5,0) -- (2.5,2.5) -- (-2.5,2.5) -- (-2.5,0) -- cycle;
\fill[green!40, opacity=0.6] (-0.5,2.5) rectangle (1.6,-2.5); 
\fill[pattern = vertical lines, pattern color = green] (-0.5,2.5) rectangle (1.6,-2.5);
\draw[blue!80, thick] (-0.5,1.52) -- (1.6,1.48) node[above, pos=0.7] {\tiny$S_{i,j}$};
\draw[densely dashed, thick] (-2.5,-2.5) rectangle (2.5,2.5); 
\draw[thick] (-3.5,0) -- (3.5,0);
\draw (-2,-2) node {\small$\bar\Omega_i$};
\draw (-2,2) node {\small$\bar\Omega_j$};
\draw (-3.5,0) -- (0,0) node[above, pos=0.14] {\small$\mathcal{L}_{c,i}$};
\draw[<-] (-0.5,3.3) -- (-0.5,0);
\draw[densely dashed] (-0.5,0) -- (-0.5,-3.3);
\draw[fill] (-0.5,0) circle (2pt) node[below] {\tiny$x'_1$};
\draw[<-] (1.6,3.3) -- (1.6,0) node[right, pos=0.1] {\small$\nu_{c,i}$};
\draw[densely dashed] (1.6,0) -- (1.6,-3.3);
\draw[fill] (1.6,0) circle (2pt) node[below] {\tiny$x'_2$};
\end{tikzpicture}
}
\caption{An illustration of the setting in the neighborhood~$Q_{n,i}$
of either a) a triple junction $n=p\in\mathcal{N}_i$
or b) a straight line segment $n=c\in\mathcal{N}_i$. The yellow regions
are the intersection of~$\{(1{-}\bar\chi_i)>0\}$ with~$Q_{n,i}$.
The green regions hatched from top to bottom represent the images of
the sets~$L_{n,i}(x')$, as defined by~\eqref{eq:oneDimSlices2}  
for $x'\in\mathcal{L}_{n,i}\cap Q_{n,i}$,
under the change of variables~$\Psi_{n,i}$. The arrows on top indicate the underlying
``orientation'' of the slices~$L_{n,i}(x')$ as determined by the slicing normal~$\nu_{n,i}$.
Within the green regions, a possible location of the network of interfaces (blue lines) from the
competitor partition~$(\Omega_1,\ldots,\Omega_P)$ is indicated.
The two regimes a) and b) serve as the main motivation for the definition~\eqref{eq:flatGraphCase}
of the sets~$S^{\mathrm{flat}}_{i,j,n}$, $j \in \mathcal{K}_n\setminus\{i\}$.
\label{fig:flatGraph}}
\end{figure}
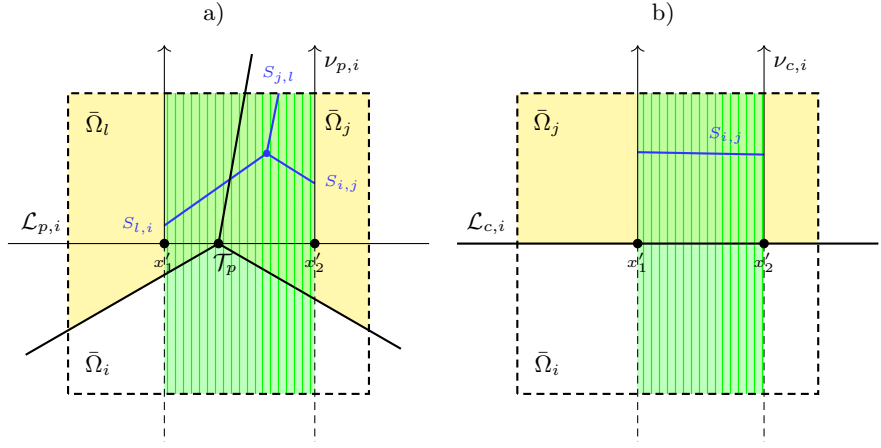

Fix $n \in \mathcal{N}_i$.
Since each phase $\Omega_k $ for $k=1,\ldots,P$ of the competitor is a 
set of finite perimeter in~$\Rd[2]$
the one-dimensional slices 
\begin{align}
\label{eq:oneDimSlices}
(\Omega_k)_{n,i,x'} := 
\big\{y\in\Rd[]\colon\chi_{k}\big(\Psi_{n,i}(x',y)\big) = 1 \big\},
\quad x'\in\mathcal{L}_{n,i},
\end{align} 
are of finite perimeter in $\Rd[]$ for $\mathcal{H}^{1}$ 
almost every $x'\in~\mathcal{L}_{n,i}$ due to Theorem~\ref{OneDimRestrictions}.
In particular, they can be represented by means of a finite disjoint
union of closed intervals, and the reduced boundary $\partial^*(\Omega_i)_{n,i,x'}$ 
is simply given by the corresponding interval endpoints, cf.\ \cite[Proposition 3.52]{AmbrosioFuscoPallara}. 
In particular, $(\Omega_i)_{n,i,x'}$ is also of finite perimeter in the open interval
\begin{align}
\label{eq:oneDimSlices2}
L_{n,i}(x') :=
\big\{y\in\Rd[] \colon \Psi_{n,i}(x',y) \in Q_{n,i}\big\}
\end{align}
for $\mathcal{H}^{1}$ almost every $x' \in \mathcal{L}_{n,i} \cap Q_{n,i}$.
We refer to Figure~\ref{fig:flatGraph} for a visualization of the 
definitions~\eqref{eq:oneDimSlices2} and \eqref{eq:flatGraphCase}, the latter below.

For each $j \in \mathcal{K}_n\setminus\{i\}$, we finally introduce a 
set representing roughly speaking the region
where the rectifiable set $S_{i,j}$ shows up in the set $U_{n} \cap \{(1{-}\bar\chi_i)>0\}$, where it 
appears almost flat when screened with respect to 
the ``calibration normal'' $\frac{\xi_i-\xi_j}{\sigma_{i,j}}$,
and where below~$S_{i,j}$ no phase $\Omega_k$ of the competitor 
with $k \in \mathcal{K}_n\setminus\{i\}$ shows up.
We formalize this as follows: for all $n \in \mathcal{N}_i$, $j \in \mathcal{K}_n\setminus\{i\}$, and $\eps_1 \in (0,1)$ we define
\begin{align}
\label{eq:flatGraphCase}
S^{\mathrm{flat}}_{i,j,n}(\eps_1)
:= \Big\{&x {=} \Psi_{n,i}(x',y) \in S_{i,j} \cap Q_{n,i} \cap \{(1{-}\bar\chi_i)>0\} \colon
\Big(\frac{\xi_i{-}\xi_j}{\sigma_{i,j}}\cdot\vec{n}_{i,j}\Big)(x) > 1{-}\varepsilon_1,\,
\\&\nonumber~~~~~~~~~~~~~~~~~~~~~~~~~~~~~~~~~~~~ L_{n,i}(x')\cap (0,y)\cap 
\bigcup_{k \in \mathcal{K}_n\setminus\{i\}} (\Omega_k)_{n,i,x'} = \emptyset  \Big\}.
\end{align}

We next argue that $S^{\mathrm{flat}}_{i,j,n}(\eps_1)$ cannot have too much length away from its corresponding interface of the supposed local minimizer, as otherwise too much $L^1$-error is accrued.

\begin{lemma}\label{lemma:flat_close_to_interface}
	For all sufficiently small $\eps_1\in (0,1)$ only depending on properties of $\bar \chi$, we have the following:
	Under the assumptions of Lemma \ref{lemma:control_shortness_constraint} and for all $\eps_3 \in (0,1)$ we can choose $\bar s >0$ small enough so that for all $i \in \{1,\ldots,P\}$ we have
	\begin{equation}
\begin{aligned}
\label{Assumption3Step3}
\sum_{n \in \mathcal{N}_{i}} \sum_{j \in \mathcal{K}_n\setminus\{i\}} 
\mathcal{H}^{1} \Big(S^{\mathrm{flat}}_{i,j,n}(\eps_1)
\cap \Big\{\dist(\cdot,\mathcal{I}_i) > \frac{\delta_3}{2}\bar r\Big\}\Big)
< \varepsilon_3,
\end{aligned}
\end{equation}
where $\delta_3$ is taken from property iv) of local paired calibrations, see Definition \ref{DefinitionLocalCalibration}.
\end{lemma}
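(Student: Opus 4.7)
My plan is to project $S^{\mathrm{flat}}_{i,j,n}(\eps_1)$ along $\nu_{n,i}$ onto the line $\mathcal{L}_{n,i}$, bound the length of this projection by the $L^1$-proximity $\bar s$ of $\chi$ to $\bar\chi$, and convert this back to a bound on $\mathcal{H}^1(S^{\mathrm{flat}}_{i,j,n})$ via a coarea argument. The two pillars are Lemma~\ref{lemma:geometry} (providing the transverse Jacobian bound) and an elementary $L^1$-estimate along vertical slices of the $\nu_{n,i}$-foliation.

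Concretely, I would first choose $\eps_1$ small enough (depending only on $\bar\chi$ and $\sigma$) so that Lemma~\ref{lemma:geometry} gives $\nu_{n,i}\cdot\vec{n}_{i,j}\geq c>0$ on $S^{\mathrm{flat}}_{i,j,n}(\eps_1)$, and argue that the projection $\mathcal{P}_{n,i}$ is injective on $\bigcup_j S^{\mathrm{flat}}_{i,j,n}$ and only picks up positive fibers. Were $y_1<y_2$ two preimages along a single slice, the tilt-excess bound together with property~iii) of the local paired calibration forces $\vec{n}_{i,j}$ at the lower point to be so nearly aligned with $\nu_{n,i}$ that $\chi_j\equiv 1$ on a small interval $(y_1,y_1+\eta)$, while the slab condition at $y_2$ imposes $\chi_k\equiv 0$ on $(0,y_2)$ for all $k\in\mathcal{K}_n\setminus\{i\}$; this is a contradiction once $\eta<y_2-y_1$. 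The same reasoning applied to a single $y_1<0$ rules out negative fibers. The area formula then yields $\mathcal{H}^1(S^{\mathrm{flat}}_{i,j,n})\leq c^{-1}\mathcal{H}^1(\mathcal{P}_{n,i}(S^{\mathrm{flat}}_{i,j,n}))$.

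For the $L^1$-estimate, fix $x=\Psi_{n,i}(x',y)\in E_n := S^{\mathrm{flat}}_{i,j,n}(\eps_1)\cap\{\dist(\cdot,\mathcal{I}_i)>\tfrac{\delta_3}{2}\bar r\}$. The distance bound together with $\bar\chi_i(x)=0$ implies by connectedness $B_{\delta_3\bar r/2}(x)\cap\bar\Omega_i=\emptyset$, so $\bar\chi_i\equiv 0$ along the $1$D segment $[y-\tfrac{\delta_3}{2}\bar r,\,y+\tfrac{\delta_3}{2}\bar r]$ of the vertical slice through $x'$. The slab condition yields $\chi_k\equiv 0$ on $(0,y)$ for every $k\in\mathcal{K}_n\setminus\{i\}$, and the locality properties of the admissible pair $(\bar r,\delta)$ force $\bar\chi$ to take values only in $\mathcal{K}_n$ throughout $Q_{n,i}$. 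Consequently, on the overlap of these two segments (of length $\min(y,\tfrac{\delta_3}{2}\bar r)$) one has $\sum_\ell|\chi_\ell-\bar\chi_\ell|\geq 1$ pointwise, and Fubini together with $\max_\ell\|\chi_\ell-\bar\chi_\ell\|_{L^1(D)}\leq\bar s$ gives the projected bound $\int_{\mathcal{P}_{n,i}(E_n)}\min(y(x'),\tfrac{\delta_3}{2}\bar r)\,\mathrm{d}\mathcal{H}^1(x')\lesssim P\bar s$.

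For $n\in\mathcal{C}\cup\mathcal{B}$ the line $\mathcal{L}_{n,i}$ lies on $\mathcal{I}_i$ along the entire segment $\mathcal{T}_c$ (respectively at the single point $\mathcal{T}_b$), so the distance bound directly forces $y>\tfrac{\delta_3}{2}\bar r$ and the overlap attains its full length $\tfrac{\delta_3}{2}\bar r$; the projection length is then controlled by $\bar s/(\delta_3\bar r)$ up to constants, and combining with the coarea estimate and summation over the finitely many $n\in\mathcal{N}_i$ and $j\in\mathcal{K}_n\setminus\{i\}$ closes the argument once $\bar s$ is chosen small in terms of $\eps_3$. The main technical obstacle is the triple-junction case $n=p\in\mathcal{P}$, where $\mathcal{L}_{p,i}$ meets $\mathcal{I}_i$ only at the single point $\mathrm{t}_p$: one may simultaneously have $0<y\leq\tfrac{\delta_3}{2}\bar r$ and $|x'-\mathrm{t}_p|>c\delta_3\bar r$, leaving an overlap of length merely $y$ and the weaker control $\bar s\gtrsim\int y(x')\,\mathrm{d}\mathcal{H}^1(x')$. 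I expect to handle this residual regime by exploiting the forced lateral separation: at such $x'$ the vertical slice lies entirely within a single side phase $\bar\Omega_{j_k}\subset\bar\Omega_i^c$ on an interval of length $\gtrsim\delta_3\bar r$ straddling $s=0$, and a pigeonhole over $k\in\{1,2\}$ combined with the near-alignment of $S_{i,j}$ with the straight segment $I_{i,j_k}$ emanating from $\mathrm{t}_p$ (coming from the same tilt-excess bound) should restore an overlap of length $\gtrsim\delta_3\bar r$ and complete the argument.
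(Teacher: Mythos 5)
Your main line --- project along $\nu_{n,i}$, use the slab condition in \eqref{eq:flatGraphCase} together with Theorem~\ref{OneDimRestrictions} to get multiplicity one on each slice, use Lemma~\ref{lemma:geometry} as a lower bound for the projection Jacobian, and convert a per-slice $L^1$ discrepancy on $(0,y)$ into a bound on the projected length --- is exactly the ``slicing argument together with definition~\eqref{eq:flatGraphCase}'' that the paper invokes (the paper phrases it as a contradiction argument and compresses all details, including the role of convexity at boundary features, into one sentence). For $n\in\mathcal{C}$ your argument is complete, and for $n\in\mathcal{B}$ it becomes complete once you invoke explicitly the standing convexity of $B_{2\bar r}(\mathcal{T}_b)\cap D$, which you use implicitly when you claim the slab below $x$ stays inside $D$ and avoids $\bar\Omega_i$.

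The genuine gap is the triple-junction regime that you yourself flag, and the repair you sketch cannot work as described. In that regime the per-slice $L^1$ discrepancy really is only of order $y$, not of order $\delta_3\bar r$: consider a competitor that agrees with $\bar\chi$ except that, in a lateral window of width comparable to $\delta'\bar r$ next to $\mathrm{t}_p$ (where $\dist(\cdot,I_{i,j})$ lies between $\tfrac{\delta_3}{2}\bar r$ and $\tfrac14\delta'\bar r$, so that $\eta_c\equiv1$ and $\xi_i-\xi_j=\sigma_{i,j}\bar{\vec{n}}_{i,j}$ exactly), one adds to $\Omega_i$ a sawtooth-shaped layer between the line $\mathcal{L}_{p,i}$ and height $\epsilon$, whose upper boundary consists of segments parallel to $I_{i,j}$ joined by short vertical risers. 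The slanted tops then lie in $S^{\mathrm{flat}}_{i,j,p}(\eps_1)$, are at distance larger than $\tfrac{\delta_3}{2}\bar r$ from $\mathcal{I}_i$, and carry a total length of order $\delta'\bar r$ independent of $\epsilon$, while $\max_k\|\chi_k-\bar\chi_k\|_{L^1(D)}$ is only of order $\epsilon\,\delta'\bar r$; below height $0$ and above the sawtooth the competitor coincides with $\bar\chi$, so the interval ``straddling $s=0$ of length $\gtrsim\delta_3\bar r$'' that you hope to exploit carries no discrepancy at all. Hence no pigeonhole or near-alignment argument along the $\nu_{p,i}$-slices can restore a per-slice overlap of order $\delta_3\bar r$; whatever closes this case must bring in information beyond $L^1$-smallness along these slices --- for instance the control of the steep part of the interfaces (which in such configurations is necessarily comparable to the flat part, and which is exactly the additional hypothesis available when the lemma is applied inside Lemma~\ref{lemma:short_inside_strip} and Proposition~\ref{prop:stripe_lower_bound}), or a slicing direction and region adapted to $I_{i,j}$ near $\mathrm{t}_p$. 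The paper's own one-line proof does not spell this case out either, but as it stands your proposal leaves the triple-junction case open, and the specific fix you propose goes in a direction that cannot succeed.
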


\begin{proof}
	Towards a contradiction, we assume that for all $\bar s>0$ there exists $i \in\{1,\ldots,P\}$ such that
\begin{equation}
\sum_{n \in \mathcal{N}_{i}} \sum_{j \in \mathcal{K}_n\setminus\{i\}} 
\mathcal{H}^{1} \Big(S^{\mathrm{flat}}_{i,j,n}(\eps_1)
\cap \Big\{\dist(\cdot,\mathcal{I}_i) > \frac{\delta_3}{2}\bar r\Big\}\Big)
> \varepsilon_3.
\end{equation}
In particular, there exists
a constant $C_4>0$, some $n \in \mathcal{N}_i$, and some $j \in \mathcal{K}_n\setminus\{i\}$ 
such that we have
\begin{align}
\label{Assumption3Step3Refined}
\mathcal{H}^{1} \Big(S^{\mathrm{flat}}_{i,j,n}(\eps_1)
\cap \Big\{\dist(\cdot,\mathcal{I}_i) > \frac{\delta_3}{2}\bar r\Big\}\Big) 
> \frac{\varepsilon_3}{C_4}.
\end{align}

A slicing argument together with definition~\eqref{eq:flatGraphCase}
of the sets~$S^{\mathrm{flat}}_{i,j,n}(\eps_1)$ 
then shows that, provided $\eps_1$ is sufficiently small in a universal sense, there exists a constant $\varepsilon_4>0$ such that
\begin{align}
\sum_{k\in\mathcal{K}_n\setminus\{i\}}\|\chi_k{-}\bar\chi_k\|_{L^1(D)} 
\geq \varepsilon_4.
\end{align}	
Note that for $n \in \mathcal{B}$ this step requires the assumed convexity of $B_{2\bar{r}}(\mathcal{T}_n) \cap D$.
It thus suffices to choose $\bar s < \varepsilon_4$ in order to obtain a contradiction.
\end{proof}

In a next step, we transfer the smallness obtained in Lemma \ref{lemma:flat_close_to_interface} to all parts of the interfaces, regardless of steepness.
To this end we introduce the partition
\begin{align}
S_{i,j} \cap Q_{n,i} \cap \{(1{-}\bar\chi_i)>0\}
 \label{eq:decomp}
=: S^{\mathrm{flat}}_{i,j,n}(\eps_1)
\cupdot S^{\mathrm{steep}}_{i,j,n}(\eps_1)
\cupdot S^{\mathrm{res}}_{i,j,n}(\eps_1)
\end{align}
we alluded to after Lemma \ref{lemma:tilt_excess_control},
where we used
\begin{align*}
S^{\mathrm{steep}}_{i,j,n} (\eps_1)
:= \Big\{&x \in S_{i,j} \cap Q_{n,i} \cap \{(1{-}\bar\chi_i)>0\} \colon
\Big(\frac{\xi_i{-}\xi_j}{\sigma_{i,j}}\cdot\vec{n}_{i,j}\Big)(x) \leq 1{-}\varepsilon_1\Big\},
\\
S^{\mathrm{res}}_{i,j,n} (\eps_1)
:= \Big\{&x {=} \Psi_{n}(x',y) \in S_{i,j} \cap Q_{n,i} \cap \{(1{-}\bar\chi_i)>0\} \colon
\Big(\frac{\xi_i{-}\xi_j}{\sigma_{i,j}}\cdot\vec{n}_{i,j}\Big)(x) > 1{-}\varepsilon_1,\,
\\&\nonumber~~~~~~~~~~~~~~~~~~~~~~~~~~~~~~~~~~~ L_{n,i}(x')\cap (0,y)\cap 
\bigcup_{k \in \mathcal{K}_n\setminus\{i\}} (\Omega_k)_{n,x'} \neq \emptyset  \Big\}.
\end{align*}
We will also record an estimate of the length of the two additional sets in terms of the relative energy, as this is what we will later compensate the signed bulk term in estimate \eqref{ProofTheoremAux1} with.

Due to Lemma \ref{lemma:flat_close_to_interface}, we will mostly have to deal with the residual parts of the interface.
To this end, we slice towards $\bar \Omega_i$ and observe that either we hit a phase that should not be present at the considered topological feature $n$ or we hit a steep part of the interface.
The estimate then relies on applications of the coarea formula, for which we need a bound away from zero on the coarea factor.
This will be the flatness condition of $S^{\mathrm{res}}_{i,j,n} (\eps_1)$, mediated by the estimate \eqref{eq:coareaFactor}.

\begin{lemma}\label{lemma:short_inside_strip}
Under the assumptions of Lemma \ref{lemma:control_shortness_constraint} there exists a constant $\varepsilon_1 \in (0,1)$ sufficiently small to apply Lemmas \ref{lemma:geometry} and \ref{lemma:flat_close_to_interface} so that the following holds:

For all $\eps_2,\eps_3\in (0,1)$ let $\bar s >0$ be small enough such that
\begin{align}
\label{AssumptionStep3}
\sum_{i,j=1,i\neq j}^P \mathcal{H}^1\big(\big\{x\in S_{i,j}\colon \frac{\xi_i{-}\xi_j}{\sigma_{i,j}}
\cdot \vec{n}_{i,j}\leq 1{-}\varepsilon_1\big\}\big)
\leq \varepsilon_2
\end{align}
and the conclusion of Lemma \ref{lemma:flat_close_to_interface} hold.

Then there exists constant $C_3 > 0$ such that we have
\begin{equation}
\begin{aligned}
\label{eq:smallnessFlatGraphCase}
&\sum_{i=1}^P\sum_{n\in\mathcal{N}_i}
\mathcal{H}^1 \Big(\supp|\nabla\chi_{i}| \cap U_n \cap \{(1{-}\bar\chi_i)>0\} 
\cap \Big\{\dist(\cdot,\mathcal{I}_i) > \frac{\delta_3}{2}\bar r\Big\}\Big)
\\&~~~
\leq C_3(\varepsilon_2 + \varepsilon_3).
\end{aligned}
\end{equation}
Additionally, for $i \in \{1,\ldots,P\}$ and $n \in \mathcal{N}_i$ we have
\begin{equation}
\begin{aligned}
\sum_{j \in \mathcal{K}_n \setminus\{i\}}
\mathcal{H}^1\big(S^{\mathrm{steep}}_{i,j,n}(\eps_1)
\cupdot S^{\mathrm{res}}_{i,j,n}(\eps_1) \big) +\sum_{j \notin \mathcal{K}_n}
\mathcal{H}^1 \Big(S_{i,j} \cap Q_{n,i} \Big)
\label{eq:estimateNoFlatGraphRelativeEnergy}
\leq \frac{C_3}{\sigma_{\mathrm{min}}\varepsilon_1} 
E[\chi|\bar \chi].
\end{aligned}
\end{equation}
\end{lemma}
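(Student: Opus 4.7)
The plan is to bound each piece of the decomposition~\eqref{eq:decomp} of $S_{i,j}\cap Q_{n,i}\cap\{(1{-}\bar\chi_i)>0\}$ separately, together with the case $j\notin\mathcal{K}_n$. On $S^{\mathrm{steep}}_{i,j,n}(\varepsilon_1)$ the relative energy integrand $1-\frac{\xi_i-\xi_j}{\sigma_{i,j}}\cdot\vec{n}_{i,j}$ is pointwise at least $\varepsilon_1$ by definition, and the shortness bound~\eqref{eq:shortness} from Lemma~\ref{lemma:geometry} ensures the same on $S_{i,j}\cap Q_{n,i}$ whenever $j\notin\mathcal{K}_n$. Hence both types of pieces are directly controlled by $E[\chi|\bar\chi]/(\sigma_{\min}\varepsilon_1)$, and, after summing over $i$ and $n$ while exploiting the bounded overlap of the family $\{Q_{n,i}\}$ encoded by Definition~\ref{def:locRadius}, by $\varepsilon_2$ via the hypothesis~\eqref{AssumptionStep3}. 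The flat part is handled by the conclusion of Lemma~\ref{lemma:flat_close_to_interface}, giving the contribution~$\varepsilon_3$.

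The main obstacle is controlling the residual part $S^{\mathrm{res}}_{i,j,n}(\varepsilon_1)$, on which the calibration is long so that the integrand of the relative energy drops below~$\varepsilon_1$. To this end I would slice along the slicing normal~$\nu_{n,i}$ and use an injectivity-type argument. Let $x=\Psi_{n,i}(x',y)\in S^{\mathrm{res}}_{i,j,n}$. By~\eqref{eq:coareaFactor} we have $\nu_{n,i}\cdot\vec{n}_{i,j}(x)\geq c>0$, which forces $\Omega_i$ to lie immediately below~$y$ and $\Omega_j$ immediately above it in the one-dimensional slice $L_{n,i}(x')$. The residual condition then provides some $k\in\mathcal{K}_n\setminus\{i\}$ and $y''\in(0,y)$ with $(x',y'')\in\Omega_k$, so the slice must cross $\partial\Omega_i$ downwards at some intermediate $\tilde y\in(0,y)$, landing on an interface $S_{i,m}$ with $\vec{n}_{i,m}(x',\tilde y)\cdot\nu_{n,i}\leq 0$. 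The contrapositive of~\eqref{eq:coareaFactor} (when $m\in\mathcal{K}_n\setminus\{i\}$) and~\eqref{eq:shortness} (when $m\notin\mathcal{K}_n$) then force $(x',\tilde y)$ to lie either in $S^{\mathrm{steep}}_{i,m,n}(\varepsilon_1)$ or in $S_{i,m}\cap Q_{n,i}$ with $m\notin\mathcal{K}_n$.

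The key combinatorial observation is that between any two consecutive residual points in the same slice, $\Omega_i$ is first exited and then re-entered, producing at least one further steep/other-$j$ interface point. Hence the number of residual points in each slice is dominated by the number of steep/other-$j$ points in the same slice, and the coarea formula combined with the uniform lower bound $\nu_{n,i}\cdot\vec{n}_{i,j}\geq c$ on $S^{\mathrm{res}}_{i,j,n}$ gives
\[
c\,\mathcal{H}^1(S^{\mathrm{res}}_{i,j,n})
\leq \int_{\mathcal{L}_{n,i}}\#\big(S^{\mathrm{res}}_{i,j,n}\cap\mathcal{P}_{n,i}^{-1}(x')\big)\,dx'
\leq \mathcal{H}^1\big(\text{steep}\cup\text{other-}j\big).
\]
Combining this with the earlier bounds immediately gives~\eqref{eq:estimateNoFlatGraphRelativeEnergy} and, after summation over $i,n$ and invoking Lemma~\ref{lemma:flat_close_to_interface} for the flat contribution, also~\eqref{eq:smallnessFlatGraphCase}. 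The most delicate aspect will be the slice-level injectivity argument together with the case analysis near triple junctions and boundary endpoints needed to ensure that the identified exit points $(x',\tilde y)$ genuinely lie in $Q_{n,i}\cap\{(1{-}\bar\chi_i)>0\}$; both rely crucially on the localization properties from Definition~\ref{def:locRadius} and on the construction of $\nu_{n,i}$ from Lemma~\ref{lemma:geometry}.
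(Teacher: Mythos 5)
Your proposal is correct and follows essentially the same route as the paper's proof: the same decomposition into steep, absent-phase ($j\notin\mathcal{K}_n$), flat and residual parts, with the first two controlled pointwise by the relative energy (via shortness) and by \eqref{AssumptionStep3}, the flat part by Lemma~\ref{lemma:flat_close_to_interface}, and the residual part by the same slice-wise first-exit/injectivity argument combined with the coarea formula and the lower bound \eqref{eq:coareaFactor} on the coarea factor. The delicate point you flag, namely that the identified exit points lie in $Q_{n,i}\cap\{(1-\bar\chi_i)>0\}$, is precisely what the paper addresses by building this constraint into the definition of the first-exit map $g$, so your outline matches the paper's argument.
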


\begin{proof}
\textit{Step 1: Preliminary estimates}.
Let $i \in \{1,\ldots,P\}$ and $n \in \mathcal{N}_i$.
We clearly have
\begin{equation}
\begin{aligned}
&\mathcal{H}^1 \Big(\supp|\nabla\chi_{i}| \cap U_n \cap \{(1{-}\bar\chi_i)>0\}
\cap \Big\{\dist(\cdot,\mathcal{I}_i) > \frac{\delta_3}{2}\bar r\Big\}\Big)
\\& \label{eq:unionBoundSmallness}
\leq \sum_{j=1,\,j\neq i}^P \mathcal{H}^1 \Big(S_{i,j} \cap U_n \cap \{(1{-}\bar\chi_i)>0\} + 
\cap \Big\{\dist(\cdot,\mathcal{I}_i) > \frac{\delta_3}{2}\bar r\Big\}\Big).
\end{aligned}
\end{equation}

For a phase $j \notin \mathcal{K}_n$, 
we necessarily have $\frac{|\xi_i-\xi_j|}{\sigma_{i,j}}\leq \delta_1 < 1$ 
throughout the set $U_n$ 
by property~\textit{ii)} in Definition~\ref{DefinitionLocalCalibration} of a local paired calibration,
the property~\eqref{eq:auxPropertyCubes100}, and Definition~\ref{def:locRadius}
of an admissible pair of localization scales. Hence, choosing $\varepsilon_1\in (0,1)$ small 
enough (depending in this context only on $\delta_1$ from 
Definition~\ref{DefinitionLocalCalibration} of a local paired calibration),
we obtain that
\begin{align}\label{eq:smallnessCond4}
\sum_{j \notin \mathcal{K}_n}
\mathcal{H}^1 \Big(S_{i,j} \cap Q_{n,i}\Big)
\leq 
\frac{C_3}{\sigma_{\mathrm{min}}\varepsilon_1} 
\sum_{
j=1,\,j\neq i}^P \sigma_{i,j} \int_{S_{i,j}}
1 - \frac{\xi_{i}{-}\xi_j}{\sigma_{i,j}} \cdot \vec{n}_{i,j} \,\mathrm{d}\mathcal{H}^1,
\end{align}
as well as, by assumption \eqref{AssumptionStep3}, that
\begin{align}\label{eq:smallnessCond5}
&\sum_{j \notin \mathcal{K}_n}
\mathcal{H}^1 \left(S_{i,j} \cap Q_{n,i}\right)
\leq 
\eps_2.
\end{align}

Fix now a phase $j\in\mathcal{K}_n$.
Assumptions~\eqref{AssumptionStep3} and, after choosing $\eps_1$ sufficiently small, the applicability of Lemma \ref{lemma:flat_close_to_interface} immediately imply
\begin{align}
\label{eq:estimateFlatAndSteepGraph}
&\sum_{j \in \mathcal{K}_n \setminus\{i\}}
\mathcal{H}^1 \Big(S^{\mathrm{flat}}_{i,j,n} (\eps_1) \cap 
\Big\{\dist(\cdot,\mathcal{I}_i) > \frac{\delta_3}{2}\bar r\Big\}\Big)
{+} \mathcal{H}^1 \big(S^{\mathrm{steep}}_{i,j,n}(\eps_1) \big)
\leq C_3(\varepsilon_3 {+} \varepsilon_2).
\end{align}
By the definition of the set~$S^{\mathrm{steep}}_{i,j,n}(\eps_1)$, the
part of the estimate~\eqref{eq:estimateNoFlatGraphRelativeEnergy} concerning the
$\mathcal{H}^1$ measure of~$S^{\mathrm{steep}}_{i,j,n}(\eps_1)$ is also immediate.

\textit{Step 2: The slicing argument.}
Let $x \in S^{\mathrm{res}}_{i,j,n}(\eps_1)$.
Additionally, let $x' := P_{n,i}(x)$
and $y \in L_{n,i}(x')$ be the coordinates such that $x=\Psi_{n,i}(x',y)$.
Note that we also have  $x' \in \mathcal{L}_{n,i} \cap Q_{n,i}$ and that if $n \in \mathcal{B}$ all the sets considered here are convex by the assumption of $D$ being locally convex at $\mathcal{T}_n$.

As $\vec{n}_{i,j}$ points from phase $i$ to phase $j$, the flatness condition of $S^{\mathrm{res}}_{i,j,n}$ and property~\eqref{eq:coareaFactor} imply that the phase~$j$ of the competitor lies above phase~$i$ in the direction of the slicing normal $\nu_{n,i}$.
Furthermore, the ``non-graph'' condition of $S^{\mathrm{res}}_{i,j,n}$ says that the map
\begin{align*}
	\tilde g(x) := \min\left\{ t>0 : x -t \nu_{n,i}  \in \partial^* \Omega_i \cap \{(1-\bar \chi_i )>0 \}\right\}
\end{align*}
is well-defined and
\begin{align}
	g(x) := x - \tilde g(x) \nu_{n,i} 
\end{align}
is injective with
\begin{align*}
	g\left(S^{\mathrm{res}}_{i,j,n}(\eps_1) \right) \subset \left(\bigcup_{k \not\in \mathcal{K}_n} \partial^*\Omega_k \cup \bigcup_{k \in \mathcal{K}_n\setminus\{i\}} \partial^*\Omega_k \right) \cap Q_{n,i} \cap \{(1-\bar \chi_i )>0 \}.
\end{align*}
If the corresponding phase satisfies $k \in \mathcal{K}_n\setminus\{i\}$, then we must have $g(x) \in S^{\mathrm{steep}}_{i,k,n}(\eps_1)$ since otherwise equation \eqref{eq:coareaFactor} would give $\nu_{n,i} \cdot n_{i,k}(g(x)) >0$, which cannot be as we exit phase $i$ at $g(x)$ by going down.
In particular, we get
\begin{align}\label{eq:map_to_steep_or_garbage}
	g\left(S^{\mathrm{res}}_{i,j,n}(\eps_1) \right) \subset \left( \bigcup_{k \not\in \mathcal{K}_n} \partial^*\Omega_k  \cap Q_{n,i}\right) \cup \bigcup_{k \in \mathcal{K}_n\setminus\{i\}}S^{\mathrm{steep}}_{i,k,n}(\eps_1).
\end{align}

For $m \in \mathbb{N}$ we define the set
\begin{align*}
	\hat P_m := \left\{x' \in \mathcal{L}_{n,i} \cap Q_{n,i}: |\{x \in S^{\mathrm{res}}_{i,j,n}(\eps_1): P_{n,i}(x)= x'\}| = k \right\}.
\end{align*}
By the flatness condition, estimate \eqref{eq:coareaFactor} and the coarea formula, we have that
\begin{align*}
	\mathcal{H}^1(S^{\mathrm{res}}_{i,j,n}(\eps_1) \leq C_3 \sum_{m\in \mathbb{N}} m \mathcal{H}^1\left(\hat P_m\right).
\end{align*}
As the set $g(\{ x\in S^{\mathrm{res}}_{i,j,n}: P_{n,i}(x) \in \hat P_m\})$ is a union of $m$ graphs over $\hat P_m$ by definition, the coarea formula together with the inclusion \eqref{eq:map_to_steep_or_garbage}, we finally get
\begin{align*}
	\mathcal{H}^1\left(S^{\mathrm{res}}_{i,j,n}(\eps_1)\right) & \leq C_3 \left(\sum_{j \notin \mathcal{K}_n}  \mathcal{H}^1 \left(S_{i,j} \cap Q_{n,i}\right)+ \sum_{j \in \mathcal{K}_n \setminus\{i\}}
  \mathcal{H}^1 \big(S^{\mathrm{steep}}_{i,j,n}(\eps_1) \big)\right).
\end{align*}
The statement \eqref{AssumptionStep3} therefore follows from estimates \eqref{eq:unionBoundSmallness}, \eqref{eq:smallnessCond5} and \eqref{eq:estimateFlatAndSteepGraph}, while the claim \eqref{eq:estimateNoFlatGraphRelativeEnergy} follows from estimate \eqref{eq:smallnessCond5} and the remark after estimate \eqref{eq:estimateFlatAndSteepGraph}.
\end{proof}

Equipped with these statements, we are able to estimate the only remaining, possibly negative term in the inequality \eqref{ProofTheoremAux1}.
To this end, we exploit property~\textit{iv)} 
in Definition~\ref{DefinitionLocalCalibration} of a local paired calibration, which we have not done so far.
Thus we only have to consider contributions that are contained in a strip-like set away from the interfaces, such as the one shown in Figure \ref{fig:nbhdEstimate}.
As a result of the shortness condition provided by Lemma \ref{lemma:short_inside_strip}, we get that such contributions have to stay ``trapped'' in at least a slightly widened strip, so that we may again use a relative isoperimetric inequality.
Again Lemma \ref{lemma:short_inside_strip} provides the smallness, as well as the estimate against the relative energy for the steep and residual parts of the interfaces.
It finally remains to prove a similar estimate for the flat parts.

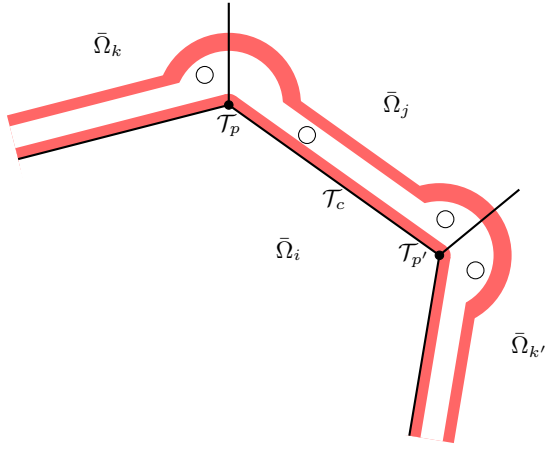
\begin{figure}
\centering
\begin{tikzpicture}[scale=.8]
\draw[red!60,line width=1.2cm] (-3.5,2.5) -- (0,0);
\draw[red!60,line width=1.2cm] (0,0) -- (-0.5,-3);
\draw[red!60,line width=1.2cm] (-3.5,2.5) -- (-7,1.6);
\fill[red!60] (0,0) circle (1.2cm);
\fill[white] (0,0) circle (0.9cm);
\fill[red!60] (-3.5,2.5) circle (1.2cm);
\fill[white] (-3.5,2.5) circle (0.9cm);
\draw[white,line width=0.9cm] (-3.5,2.5) -- (0,0);
\draw[white,line width=0.9cm] (0,0) -- (-0.5,-3);
\draw[white,line width=0.9cm] (-3.5,2.5) -- (-7,1.6);
\draw[red!60,line width=0.3cm] (-3.5,2.5) -- (0,0);
\draw[red!60,line width=0.3cm] (0,0) -- (-0.5,-3);
\draw[red!60,line width=0.3cm] (-3.5,2.5) -- (-7,1.6);
\fill[red!60] (0,0) circle (5.3pt);
\fill[red!60] (-3.5,2.5) circle (5.3pt);
\fill[white] (-7,1.6) -- (-3.5,2.5) -- (0,0) -- (-0.5,-3) -- (-6,-3) -- cycle;
\draw[fill] (0,0) circle (2pt);
\draw (0,0) node[left] {\small$\mathcal{T}_{p'}$};
\draw[fill] (-3.5,2.5) circle (2pt);
\draw (-3.5,2.5) node[below] {\small$\mathcal{T}_{p}$};
\draw (0.6,-0.25) circle (4pt);
\draw (0.1,0.6) circle (4pt);
\draw (-2.2,2) circle (4pt);
\draw (-3.9,3) circle (4pt);
\draw[thick] (-3.5,2.5) -- node[midway, below] {\small$\mathcal{T}_c$} (0,0);
\draw[thick] (0,0) -- (-0.5,-3);
\draw[thick,rotate=255,scale=.4] (0,0) -- (-3.5,2.5);
\draw[thick] (-3.5,2.5) -- (-3.5,4.2);
\draw[thick] (-3.5,2.5) -- (-7,1.6);
\draw (-2.5,0.1) node {\small$\bar\Omega_i$};
\draw (-0.7,2.5) node {\small$\bar\Omega_j$};
\draw (1.5,-1.5) node {\small$\bar\Omega_{k'}$};
\draw (-5.5,3.5) node {\small$\bar\Omega_{k}$};
\end{tikzpicture}
\caption{The stripe in between the two red regions
indicates the domain with which the proof of Proposition \ref{prop:stripe_lower_bound} is concerned and in which stray boundaries must remain trapped.
 \label{fig:nbhdEstimate}}
\end{figure}

\begin{proposition}\label{prop:stripe_lower_bound}
	Under the assumptions of Lemma \ref{lemma:control_shortness_constraint}, we can choose $\eps_1,\eps_2,\eps_3>0$ and $\bar s>0$ small enough such that we have
	\begin{align*}
		\sum_{i,j=1,i\neq j}^P \int_{U^{\mathcal{I}_i}_{(\frac{\bar r}{2},\delta)}} 
		2(\chi_i{-}\bar\chi_i) \bar\chi_j (\nabla\cdot\xi_i{-}\nabla\cdot\xi_j) \dx 
		\geq -\frac{1}{4}E[\chi|\bar\chi].
	\end{align*}
\end{proposition}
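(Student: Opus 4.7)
The plan is to combine property~\textit{iv)} of the local paired calibration (which makes the integrand vanish in a tube around the reference interfaces) with the relative isoperimetric inequality and Lemma~\ref{lemma:short_inside_strip}. The strategy parallels Lemma~\ref{lemma:control_shortness_constraint}, but now the estimate is carried out \emph{inside} the dumbbell neighborhood, where the calibration is no longer ``short'' and we rely on the flux constraint~\textit{iv)} instead of the length constraint~\textit{ii)}.

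First, since $\bar\chi_i=0$ on $\bar\Omega_j$, the integrand simplifies to $2\chi_i\bar\chi_j(\nabla\cdot\xi_i{-}\nabla\cdot\xi_j)$; by property~\textit{iv)} of Definition~\ref{DefinitionLocalCalibration} it vanishes on $\bar\Omega_j\cap\{\dist(\cdot,\mathcal{I}_i)<\delta_3\bar r\}$, so the bulk integral is supported on
\begin{align*}
A_{i,j}:=U^{\mathcal{I}_i}_{(\bar r/2,\delta)}\cap\bar\Omega_j\cap\{\dist(\cdot,\mathcal{I}_i)\geq\delta_3\bar r\}.
\end{align*}
Bounding the integrand pointwise by $2\max_k\|\xi_k\|_{W^{1,\infty}}\chi_i$ and decomposing $A_{i,j}$ according to the topological features $n\in\mathcal{N}_i$ via~\eqref{eq:decompNeighborhoodGrainBoundaryByTopFeatures}, it suffices to estimate $|G_{i,j,n}|:=|\{\chi_i{=}1\}\cap A_{i,j}\cap U_n|$ for each such triple.

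Next, I would slightly enlarge $A_{i,j}\cap U_n$ to $\Omega_{n,j}:=U_n\cap\bar\Omega_j\cap\{\dist>\delta_3\bar r/2\}$; by Definition~\ref{def:locRadius}~\textit{iii)} and the structure of the dumbbell neighborhood, $\Omega_{n,j}$ is open, connected and Lipschitz. Applying the relative isoperimetric inequality (Theorem~\ref{TheoremRelIsoperimetricInequ}) to $\{\chi_i{=}1\}\cap\Omega_{n,j}$, and using $\|\chi_i{-}\bar\chi_i\|_{L^1(D)}\leq\bar s$ to stay below the isoperimetric threshold, yields
\begin{align*}
|G_{i,j,n}|\leq C\bar s^{1/2}\,\mathcal{H}^1\bigl(\partial^*\{\chi_i{=}1\}\cap U_n\cap\{(1{-}\bar\chi_i)>0\}\cap\{\dist>\tfrac{\delta_3}{2}\bar r\}\bigr).
\end{align*}
By Lemma~\ref{lemma:short_inside_strip} together with the decomposition~\eqref{eq:decomp} of the competitor's interface into flat, steep and residual parts, the right-hand side is bounded by $C\varepsilon_3+(C/\sigma_{\min}\varepsilon_1)E[\chi|\bar\chi]$. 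Multiplying by $C\|\xi\|_{W^{1,\infty}}$ and summing over $i,j,n$, the $E$-proportional contribution becomes $\leq\tfrac{1}{8}E[\chi|\bar\chi]$ after fixing $\varepsilon_1$ (as in Lemma~\ref{lemma:short_inside_strip}) and choosing $\bar s$ sufficiently small.

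The main obstacle is the constant $\varepsilon_3$-contribution from the flat parts, which does not \emph{a priori} scale with $E[\chi|\bar\chi]$. I would resolve this through a direct computation on each slab between a reference segment $\mathcal{T}_c\subset I_{i,j}$ and a flat portion of $S_{i,j}$ at height $h(x')$: since $\xi_i{-}\xi_j=\eta_c\sigma_{i,j}\bar{\vec{n}}_{i,j}$ there, Fubini yields $\int_{\text{slab}}2\chi_i(\nabla\cdot\xi_i{-}\nabla\cdot\xi_j)\dx=2\sigma_{i,j}\!\int(\eta_c(h(x')){-}1)\,\mathrm{d}x'$, whose magnitude is dominated by $2\sigma_{i,j}\!\int(1{-}\eta_c(h))\,\mathrm{d}x'$; simultaneously, the flat-graph contribution to $E[\chi|\bar\chi]$ majorizes $\sigma_{i,j}\!\int(1{-}\eta_c(h))\,\mathrm{d}x'$ (using $\sqrt{1+(h')^2}\geq1$ in the relative energy integrand). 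This provides an unconditional linear coupling of the flat bulk contribution to $E[\chi|\bar\chi]$, which when interpolated with the $L^1$-bound $\int h\,\mathrm{d}x'\leq\bar s$ and the isoperimetric $\bar s^{1/2}$-factor (e.g.\ via Young's inequality) can also be absorbed into $\tfrac{1}{8}E[\chi|\bar\chi]$ for $\bar s$ sufficiently small. Combining the two contributions yields the claimed bound $\geq-\tfrac{1}{4}E[\chi|\bar\chi]$.
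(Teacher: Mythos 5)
Your reduction via property~\textit{iv)} of Definition~\ref{DefinitionLocalCalibration}, the confinement of the remaining bulk to a region at distance $\gtrsim\delta_3\bar r$ from $\mathcal{I}_i$, and the use of an isoperimetric inequality fed by Lemma~\ref{lemma:short_inside_strip} all parallel the paper's proof (the paper traps whole boundary curves via the planar structure theorem and uses the standard isoperimetric inequality rather than per-feature relative ones; that difference is cosmetic). The genuine gap is in the treatment of the flat parts. In your isoperimetric step the relative perimeter of $\{\chi_i=1\}$ in $\Omega_{n,j}$ necessarily contains the pieces $S^{\mathrm{flat}}_{i,j,n}(\eps_1)\cap\{\dist(\cdot,\mathcal{I}_i)>\tfrac{\delta_3}{2}\bar r\}$, which Lemma~\ref{lemma:short_inside_strip} controls only by the absolute constant $\eps_3$, and your proposed repair does not close this. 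The slab/Fubini computation couples the flat bulk contribution to the relative energy only with constant $2$ (the inequality $\sqrt{1+(h')^2}\geq 1$ discards exactly the tilt information), while the proposition needs a prefactor strictly below the $\tfrac12$ retained in Lemma~\ref{lemma:control_shortness_constraint}; and the Young-type interpolation with $\int h\,\mathrm{d}x'\leq\bar s$ yields at best $\tfrac18 E[\chi|\bar\chi]+C\bar s$, whose additive term is fatal: $E[\chi|\bar\chi]$ can be arbitrarily small (even zero) for a fixed admissible $\bar s$, so no smallness of $\bar s$ absorbs it, and the final inequality $E[\chi]\geq E[\bar\chi]$ would not follow. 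In addition, the Fubini identity presumes that the whole column below a flat point is occupied by phase $i$, whereas the definition \eqref{eq:flatGraphCase} only excludes phases in $\mathcal{K}_n\setminus\{i\}$ there, and the representation $\xi_i-\xi_j=\eta_c\sigma_{i,j}\bar{\vec{n}}_{i,j}$ fails inside the junction balls where the weights $\lambda^p_c$ enter.

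The paper's resolution of precisely this point is different and is the ingredient you are missing: once $\eps_2,\eps_3$ are chosen so small (via \eqref{eq:smallnessFlatGraphCase}) that any component of $\Omega_i$ reaching $\{\dist(\cdot,\mathcal{I}_i)>\delta_3\bar r\}\setminus\bar\Omega_i$ has its entire boundary curve trapped in $V_i$, i.e.\ detached from $\bar\Omega_i$, every flat point of $\partial^*G_i$ is sliced downward along $-\nu_{n,i}$; since the flat condition forbids phases of $\mathcal{K}_n\setminus\{i\}$ below and $G_i$ cannot extend down to $\bar\Omega_i$, the exit point lands injectively on a wrong-phase interface $S_{i,k}$ with $k\notin\mathcal{K}_n$, whose length is bounded by $\tfrac{C_3}{\sigma_{\min}\eps_1}E[\chi|\bar\chi]$ by the shortness constraint (this is \eqref{eq:fff}). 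Thus the full perimeter of the trapped sets, flat parts included, is proportional to $E[\chi|\bar\chi]$, and the isoperimetric step gives a bound of the form $C\tfrac{\eps_2+\eps_3}{\eps_1}E[\chi|\bar\chi]$ with no additive remainder, after which $\eps_2,\eps_3$ are chosen to reach the constant $\tfrac14$. Without this downward-projection control of the flat boundary pieces (or an argument quantifying the ``climbing cost'' needed for a flat graph to reach height $\sim\delta_3\bar r$ with area at most $\bar s$), your estimate does not yield a bound proportional to $E[\chi|\bar\chi]$ and the proposition as stated is not proved.
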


\begin{proof}
Let $\eps_1$ be as in Lemma \ref{lemma:flat_close_to_interface}.
We then choose $\eps_2\in (0,1)$ and $\eps_3 \in (0,1)$, which we can do in dependence on $\eps_1$ due to Lemma \ref{lemma:flat_close_to_interface} holding for all of them.
The proof will indicate the details in the end.
Finally, we choose $\bar s$ sufficiently small to apply Lemmas \ref{lemma:tilt_excess_control} and \ref{lemma:flat_close_to_interface}.
We thus have Lemmas \ref{lemma:flat_close_to_interface} and \ref{lemma:short_inside_strip} at our disposal after using Lemma \ref{lemma:tilt_excess_control} to deal with the case of estimate \eqref{AssumptionStep2} holding.


By means of the divergence constraint~\textit{iv)}
in Definition~\ref{DefinitionLocalCalibration} of a local paired calibration, we get
\begin{equation}
\begin{aligned}
&\sum_{i,j=1,i\neq j}^P \int_{U^{\mathcal{I}_i}_{(\frac{\bar r}{2},\delta)}} 2(\chi_i {-} \bar\chi_i) 
\bar\chi_j (\nabla\cdot\xi_i{-}\nabla\cdot\xi_j) \dx
\\&\label{ProofTheoremAux1Step3}
\geq -4\max_{i, \ldots, P}\|\xi_i\|_{W^{1,\infty}(D)}
\sum_{i,j=1, i\neq j}^P 
\int_{U^{\mathcal{I}_i}_{(\frac{\bar r}{2},\delta)} \cap \{\dist(\cdot,\mathcal{I}_i) > \delta_3\bar r\}} \chi_{i}\bar\chi_j\dx.
\end{aligned}
\end{equation}
Here, we also exploited the fact that~$\bar\chi$ represents a partition of~$D$ 
so that $\bar\chi_i\bar\chi_j=0$ for all $i,j\in\{1,\ldots,P\}$ with $i\neq j$.
Furthermore, exploiting $\sum_{j\in \{1,\ldots,P\}\setminus \{j\} } \bar\chi_j = 1{-}\bar\chi_i$
a.e.\ in $U_n$ we obtain
\begin{equation}
\begin{aligned}
&\quad \sum_{i,j=1, i\neq j}^P 
\int_{U^{\mathcal{I}_i}_{(\frac{\bar r}{2},\delta)} \cap \{\dist(\cdot,\mathcal{I}_i) > \delta_3\bar r\}} \chi_{i}\bar\chi_j\dx
&\label{ProofTheoremAux1Step3Refined}
=\sum_{i=1}^P 
\int_{U^{\mathcal{I}_i}_{(\frac{\bar r}{2},\delta)} \cap \{\dist(\cdot,\mathcal{I}_i) > \delta_3\bar r\}} 
\chi_{i} (1 {-} \bar\chi_i)\dx,
\end{aligned}
\end{equation}
so that it is sufficient to estimate the last term from above.

As by the structure result for sets of finite perimeter in the plane, the phase boundary $\partial^*\Omega_i$ for each $i=1,\ldots,P$ consists of at most countably many, disjoint, closed Lipschitz curves whose total length coincides with the perimeter. 
We now choose $\eps_2 \in (0,1)$ and $\eps_3 \in (0,1)$ small enough such that all boundary curves of phases $i =1,\ldots,P$ intersecting $U^{\mathcal{I}_i}_{(\frac{\bar r}{2},\delta)}  \cap \{\dist(\cdot,\mathcal{I}_i) > \delta_3\bar r\} \setminus \bar \Omega_i$ must still be contained in $V_{i,n} := U^{\mathcal{I}_i}_{(\bar r,\delta)} \cap \{\dist(\cdot,\mathcal{I}_i) > \frac{\delta_3}{2}\bar r\} \setminus \bar \Omega_i$ after an application of Lemma~\ref{lemma:short_inside_strip}.

Let $i \in \{1,\ldots,P\}$ and let $G_{i} \subset \Omega_i \cap V_{i}$ be the corresponding parts of $\Omega_i$.
The standard isoperimetric inequality gives
\begin{align}\label{eq:main_isoperimetric}
	\int_{U^{\mathcal{I}_i}_{(\frac{\bar r}{2},\delta)} \cap \{\dist(\cdot,\mathcal{I}_i) > \delta_3\bar r\}} 
\chi_{i} (1 {-} \bar\chi_i)\dx \leq C P(G_{i})^2.
\end{align}
By Lemma \ref{lemma:short_inside_strip}, we have
\begin{align}\label{eq:main_smallness}
	P(G_{i})^2 \leq C_3(\eps_2+\eps_3) P(G_{i}).
\end{align}
It therefore remains to estimate $P(G_i)$ also in terms of the relative energy.

Note that we must have
\begin{align}\label{eq:G_i_inclusion}
  \begin{split}
	\partial^* G_{i}  & \subset \bigcup_{n \in N_i} \Bigg( \bigcup_{j \not\in \mathcal{K}_n\setminus\{i\}} S_{i,j} \cap Q_{n,i} \cap \{(1{-}\bar\chi_i)>0\} \cap \Big\{\dist(\cdot,\mathcal{I}_i) > \frac{\delta_3}{2}\bar r\Big\} \\
	&\qquad \cup \bigcup_{j\in \mathcal{K}_n\setminus\{i\}} \left(S_{i,j,n}^{\mathrm{flat}}(\eps_1) \cup S_{i,j,n}^{\mathrm{steep}}(\eps_1) \cup S_{i,j,n}^{\mathrm{res}}(\eps_1)  \right)\Bigg).
  \end{split}
\end{align}
Again Lemma \ref{lemma:short_inside_strip} provides the desired estimate at all but the flat parts.

Let $n \in N_i$ and $j\in \mathcal{K}_n\setminus\{i\}$.
For each $x \in \partial^*G_i \cap  S_{i,j,n}^{\mathrm{flat}}(\eps_1)$ we consider the function
\begin{align*}
	\tilde g(x) := \min \left\{ t>0: x - t \nu_{p,i} \in \partial^*G_i \cap Q_{n,i} \right\},
\end{align*}
where $\nu_{p,i}$ were defined in Lemma \ref{lemma:geometry}.
Note that the function $\tilde g$ is well-defined as $G_i \subset V_i$ must be an inclusion into $D\setminus \bar \Omega_i$.
Furthermore, we define $g(x) := x -  \tilde g(x) \nu_{p,i}$ and note that by the definition \eqref{eq:flatGraphCase} of $S_{i,j,n}^{\mathrm{flat}}(\eps_1)$ we have
\begin{align}\label{eq:projection_on_minority_phase}
	g(x) \in \bigcup_{k \not\in \mathcal{K}_n\setminus\{i\}} S_{i,k} \cap Q_{n,i}.
\end{align}
By the same slicing argument as in Step 2 of Lemma \ref{lemma:short_inside_strip} and estimate \eqref{eq:estimateNoFlatGraphRelativeEnergy}, we have
\begin{align}\label{eq:fff}
	\mathcal{H}^1( \partial^*G_i \cap  S_{i,j,n}^{\mathrm{flat}}(\eps_1)) \leq \sum_{j \notin \mathcal{K}_n}
\mathcal{H}^1 \Big(S_{i,j} \cap Q_{n,i} \Big)
\leq \frac{C_3}{\sigma_{\mathrm{min}}\varepsilon_1} 
E[\chi|\bar \chi].
\end{align}

Using Lemma \ref{lemma:short_inside_strip}, and combining the result with the estimates
\eqref{ProofTheoremAux1Step3}, \eqref{ProofTheoremAux1Step3Refined}--\eqref{eq:G_i_inclusion} and \eqref{eq:fff}, we thus get
\begin{align*}
	 \sum_{i,j=1,i\neq j}^P \int_{U^{\mathcal{I}_i}_{(\frac{\bar r}{2},\delta)}} 2(\chi_i {-} \bar\chi_i)  \bar\chi_j (\nabla\cdot\xi_i{-}\nabla\cdot\xi_j) \dx  \geq -C \frac{\eps_2+\eps_3}{\eps_1} E[\chi|\bar\chi].
\end{align*}
Choosing $\eps_2, \eps_3 \in (0,1)$ small enough, we obtain the statement.
\end{proof}

\begin{proof}[Proof of Theorem \ref{MainResult}]
	Since all boundary points $\mathcal{T}_b$ with $b\in \mathcal{B}$ are separated from each other, we can slightly enlarge $D$ to be locally convex around each $\mathcal{T}_b$ and extend the locally flat partition simply by elongating all straight lines until they hit the boundary, possibly up to reducing the admitted localization scale we work with.
	Also all competitors may be extended similarly.
	Therefore, the result immediately follows from Lemma \ref{lemma:control_shortness_constraint} and Proposition \ref{prop:stripe_lower_bound}.
\end{proof}

\section{Stationary points: Proof of Theorem~\ref{thm:stationary}}

This section is devoted to the proof of Theorem~\ref{thm:stationary} stating that any critical point of the interface length functional is a regular flat partition. 
We first state and prove two auxiliary results and then present the proof of Theorem~\ref{thm:stationary} based on these lemmas.

The two auxiliary results only rely on domain variations and therefore can already be found in the literature in similar forms. 
The first auxiliary result states the Euler--Lagrange equation that is satisfied by any stationary point.

\begin{lemma}[Equilibrium equation]\label{lemma:ELeq}
	In the setting of Theorem~\ref{thm:stationary}, for any compactly supported test vector field $\eta\in C^1_{cpt}(D;\mathbb{R}^2)$, it holds
	\begin{align}
		\label{EqEquation}
		\sum_{i=1}^P \int_D \bigg(\Id-\frac{\nabla \chi_i}{|\nabla \chi_i|}\otimes \frac{\nabla \chi_i}{|\nabla \chi_i|} \bigg) : \nabla \eta \,d|\nabla \chi_i| =0.
	\end{align}
\end{lemma}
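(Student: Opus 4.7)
The plan is to produce competitors by flowing along $\eta$ and then apply the classical first variation formula for perimeter, upgrading the two-sided stationarity to an Euler--Lagrange identity via the one-sided inequality in Definition~\ref{def:stationary}. Concretely, let $\eta \in C^1_{cpt}(D;\Rd[2])$ and set $\Phi_t := \Id + t\eta$ for $|t|$ small enough (relative to $\|\nabla \eta\|_{L^\infty}^{-1}$) that $\Phi_t$ is a $C^1$-diffeomorphism of $\Rd[2]$. Since $\supp \eta \compemb D$, the map $\Phi_t$ is the identity in a neighbourhood of $\partial D$, so the pushed-forward partition $\chi^t := \chi \circ \Phi_t^{-1}$, characterized by $\{\chi^t_i = 1\} = \Phi_t(\{\chi_i = 1\})$, is again a partition of $D$ with identical trace on $\partial D$ as $\chi$. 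Because points move by at most $|t|\|\eta\|_{L^\infty}$, one has $d_H(\chi^t,\chi) \leq |t|\|\eta\|_{L^\infty}$.

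Next I would invoke the standard first variation formula for the BV perimeter (equivalently the area formula with cofactors of $\nabla \Phi_t$), which for each phase yields
\begin{align*}
\frac{d}{dt}\bigg|_{t=0} |\nabla \chi^t_i|(D) = \int_D \Big(\Id - \frac{\nabla \chi_i}{|\nabla \chi_i|}\otimes \frac{\nabla \chi_i}{|\nabla \chi_i|}\Big) : \nabla \eta \, d|\nabla \chi_i|.
\end{align*}
Because $\sigma_{i,j}\equiv 1$ and $S_{i,j} = S_{j,i}$, each interface is charged to exactly the two adjacent phases, so $E[\chi] = \sum_i |\nabla \chi_i|(D)$ and likewise for $\chi^t$. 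Summing the above identity over $i$ therefore identifies $\frac{d}{dt}|_{t=0} E[\chi^t]$ with precisely the left-hand side of~\eqref{EqEquation}. Moreover, a Taylor expansion of the cofactor formula shows that $E[\chi^t] = E[\chi] + t\,A(\eta) + O(t^2)$ with $A(\eta)$ equal to that same expression.

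To close the argument, I would apply the stationarity condition \eqref{EqCondition} to the competitor $\chi^t$ (whose trace on $\partial D$ matches that of $\chi$). Combined with the bound $d_H(\chi^t,\chi) = O(|t|)$, this gives
\begin{align*}
tA(\eta) + O(t^2) = E[\chi^t] - E[\chi] \geq -o(d_H(\chi^t,\chi)) = -o(|t|).
\end{align*}
Dividing by $t > 0$ and letting $t \to 0^+$ yields $A(\eta) \geq 0$; replacing $\eta$ by $-\eta$ (or equivalently letting $t \to 0^-$) yields $A(\eta) \leq 0$. Hence $A(\eta) = 0$, which is exactly~\eqref{EqEquation}.

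There is no substantive obstacle here; the argument is classical. The only minor care required is the compatibility between the metric-space notion of stationarity (formulated in terms of $d_H$) and the standard smooth-variation formalism, which is resolved by the uniform bound $d_H(\chi^t,\chi) \leq |t|\|\eta\|_{L^\infty}$ that lets $o(d_H)$ be absorbed into $o(|t|)$ and thus permits extraction of both one-sided derivatives from the single-sided inequality.
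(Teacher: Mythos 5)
Your proposal is correct and takes essentially the same route as the paper: the paper also constructs the competitor by the deformation $\tilde\chi(x)=\chi(x+\delta\eta(x))$ (your pushforward is the same variation up to a sign), bounds $d_H(\tilde\chi,\chi)\leq C(\eta)\delta$, expands the energy via the area formula to get the tangential-divergence first variation, and then feeds this into the stationarity condition~\eqref{EqCondition}, using the symmetry $\eta\mapsto-\eta$ to upgrade the one-sided inequality to the equality~\eqref{EqEquation}.
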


\begin{proof}
	Let $\eta\in C^1_{cpt}(D;\mathbb{R}^2)$ be a compactly supported test vector field. 
	Define $\tilde \chi(x):=\chi(x+\delta\eta(x))$ for $\delta>0$. 
	It is not too difficult to see that $d_H(\tilde \chi,\chi)\leq C(\eta) \delta$. Furthermore, by the area formula we have
	\begin{align*}
		&\frac{1}{2} \sum_{i=1}^P |\nabla \tilde \chi_i|(D)
		\\&=
		\frac{1}{2} \sum_{i=1}^P |\nabla \chi_i|(D)
		-\frac{1}{2} \delta \sum_{i=1}^P \int_D \bigg(\Id-\frac{\nabla \chi_i}{|\nabla \chi_i|}\otimes \frac{\nabla \chi_i}{|\nabla \chi_i|} \bigg) : \nabla \eta \,d|\nabla \chi_i|
		+O(\delta^2).
	\end{align*}
	Plugging these two estimates into the stationarity condition  \eqref{EqCondition} and letting $\delta\downarrow 0$ and then using the symmetry $\eta \mapsto -\eta$, we see that $\chi$ satisfies the equilibrium equation~\eqref{EqEquation}.
\end{proof}

The next lemma states a monotonicity formula for stationary points similar to the result by Allard and Almgren~\cite{AllardAlmgren}.
\begin{lemma}[Monotonicity formula]\label{lemma:monotonicity}
	In the setting of Theorem~\ref{thm:stationary}, for any point $x\in D$, there exists a radius $\bar r>0$ such that the network of interfaces $\mathcal{I} = \cup_{i,j} I_{i,j}$ of the partition $\chi$ satisfies
	\begin{align}
		\frac{1}{r} \mathcal{H}^1\big(B_r(x)\cap\mathcal{I} \big)
		\label{RAverage}
		=\sum_{y\in \partial B_r(x) \cap\mathcal{I} } \cos \alpha_y
	\end{align}
	for almost every $r\in (0,\bar r)$, where $\alpha_y$ denotes the (smaller) angle between the normal of $\partial B_r(x)$ and $\mathcal{I}$ at $y$.
	Furthermore, in the distributional sense, for $r\in (0,\bar r)$, it holds
	\begin{align}\label{eq:monotonicityformula}
		\notag \partial_r \bigg[\frac{1}{r} \mathcal{H}^1\big(B_r(x)\cap \mathcal{I} \big)\bigg] 
		&= \sum_{y\in \partial B_r(x) \cap \mathcal{I}}\bigg(\frac{1}{\cos \alpha_y}-\cos \alpha_y\bigg)
				\\&~~~
				+ \frac{1}{r} \partial_r\bigg[ \mathcal{H}^1\Big(B_r(x)\cap \mathcal{I} \cap \big\{y:\tfrac{\nabla \chi_i}{|\nabla \chi_i|}(y) \parallel \tfrac{y-x}{|y-x|} \big\}\Big)\bigg]\geq 0.
	\end{align}
\end{lemma}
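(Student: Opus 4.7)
The plan is to derive both the identity~\eqref{RAverage} and the monotonicity formula~\eqref{eq:monotonicityformula} from the Euler--Lagrange equation of Lemma~\ref{lemma:ELeq}, tested against a radial cut-off vector field $\eta(y) = \phi(|y-x|)(y-x)$, where $\phi\colon[0,\infty)\to\mathbb{R}$ is a $C^1$ cut-off with compact support in $[0,\bar r)$ and $\bar r < \dist(x,\partial D)$, so that $\eta\in C^1_{cpt}(D;\mathbb{R}^2)$ is admissible. After computing $\nabla\eta$ explicitly and exploiting that in $d=2$ the tangential projector $\Id - n\otimes n$ has trace one, while
\[
(\Id - n\otimes n):\bigl((y-x)\otimes(y-x)\bigr)/|y-x| = |y-x|\cos^2\alpha_y
\]
(with $\alpha_y$ the angle between the radial direction $(y-x)/|y-x|$ and the tangent to $\mathcal{I}$), one arrives---noting that each two-phase interface is counted twice in $\sum_i|\nabla\chi_i|$ with identical projector---at the scalar identity
\[
\int_{\mathcal{I}} \phi(|y{-}x|)\,d\mathcal{H}^1 + \int_{\mathcal{I}} |y{-}x|\phi'(|y{-}x|)\cos^2\alpha_y\,d\mathcal{H}^1 = 0.
\]

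To extract~\eqref{RAverage}, I would then take $\phi = \phi_\varepsilon$ to be a smooth approximation of the characteristic function $\mathbf{1}_{[0,r]}$ and apply the coarea formula on $\mathcal{I}$ to the Lipschitz function $y\mapsto|y-x|$, whose tangential gradient on $\mathcal{I}$ has norm $\cos\alpha_y$. On the regular part $\mathcal{I}_{\mathrm{reg}} := \mathcal{I}\setminus\{\cos\alpha_y=0\}$, coarea converts the second integral into a Lebesgue integral in a radial slicing variable; on the singular set $\mathcal{I}_{\mathrm{sing}} := \mathcal{I}\cap\bigl\{\nabla\chi_i/|\nabla\chi_i| \parallel (y-x)/|y-x|\bigr\}$, the factor $\cos^2\alpha_y$ forces the integrand to vanish. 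Letting $\varepsilon\to 0$ then yields, for $\mathcal{L}^1$-a.e.~$r\in(0,\bar r)$, the pointwise identity $\mathcal{H}^1(B_r(x)\cap\mathcal{I}) = r\sum_{y\in\partial B_r(x)\cap\mathcal{I}}\cos\alpha_y$.

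For the monotonicity formula, writing $A(r) = \mathcal{H}^1(B_r(x)\cap\mathcal{I})$ and $B(r) = \mathcal{H}^1(B_r(x)\cap\mathcal{I}_{\mathrm{sing}})$, the coarea formula applied on $\mathcal{I}_{\mathrm{reg}}$ gives the distributional identity
\[
A'(r) = \sum_{y\in\partial B_r(x)\cap\mathcal{I}} \tfrac{1}{\cos\alpha_y} + B'(r).
\]
Combining this with $A(r)/r = \sum_y\cos\alpha_y$ from~\eqref{RAverage} and expanding $\partial_r\bigl[A(r)/r\bigr] = A'(r)/r - A(r)/r^2$ then yields the monotonicity formula~\eqref{eq:monotonicityformula}. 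The non-negativity is manifest: $\cos\alpha_y\in (0,1]$ on $\mathcal{I}_{\mathrm{reg}}\cap\partial B_r(x)$ forces each contribution $\tfrac{1}{\cos\alpha_y} - \cos\alpha_y \geq 0$, and $B$ is monotone non-decreasing.

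The main technical obstacle will be the approximation argument in the second step: one has to verify that the set of ``bad'' radii---those at which $\partial B_r(x)\cap\mathcal{I}$ either contains a point with $\cos\alpha_y = 0$ or fails to be a transverse finite set---forms an $\mathcal{L}^1$-null subset of $(0,\bar r)$, so that the limit $\phi_\varepsilon\to\mathbf{1}_{[0,r]}$ is legitimate pointwise for a.e.~$r$. This should follow from countable $\mathcal{H}^1$-rectifiability of $\mathcal{I}$ (as a subset of the reduced boundary of a finite-perimeter partition in $\mathbb{R}^2$) combined with a Sard-type argument applied to $y\mapsto |y-x|$ on $\mathcal{I}$, and requires some care to treat the singular set $\mathcal{I}_{\mathrm{sing}}$ where the coarea Jacobian degenerates; this set contributes zero to the integrals arising in the derivation of~\eqref{RAverage} but produces the additional $\partial_r B(r)$-term in~\eqref{eq:monotonicityformula}.
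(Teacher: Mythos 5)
Your proposal is correct and follows essentially the same route as the paper: you test the Euler--Lagrange equation of Lemma~\ref{lemma:ELeq} with a smoothed approximation of the radial field $\chi_{B_r(x)}(y)\,(y-x)$, use the trace identity for the tangential projector together with the coarea factor $\cos\alpha_y$ to obtain \eqref{RAverage} for a.e.\ $r$, and then combine the coarea/slicing decomposition of $\partial_r\mathcal{H}^1(B_r(x)\cap\mathcal{I})$ (regular part plus the degenerate set where the interface normal is radial) with \eqref{RAverage} to deduce \eqref{eq:monotonicityformula} and its non-negativity. The only differences from the paper's argument are cosmetic: a $C^1$ cutoff in place of the piecewise-linear $f_\delta$, and a more explicit discussion of the null set of bad radii.
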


\begin{proof}
	We wish to plug the variation~$\eta(y) = \chi_{B_r(x)}(y) \frac{y-x}{r}$ into the equilibrium equation \eqref{EqEquation}. 
	To make this rigorous, we test~\eqref{EqEquation} with $\eta_\delta(y):= f_\delta(\frac{|y-x|}{r}) \frac{y-x}{r}$ for $\delta>0$ and a.\,e.\ $r\in (0,\dist(x,\partial D))$, where~$f_\delta$ is Lipschitz continuous with~$f_\delta= 1 $ in~$[0,1-\delta]$, $f_\delta  = 0$ in~$[1,\infty)$ and $f_\delta$ linear in~$[1-\delta,1]$.
	Then taking~$\delta\downarrow0$ we obtain for a.\,e.\ $r\in (0,\dist(x,\partial D))$ that
	\begin{align*}
		&
		\frac{1}{r} \mathcal{H}^1\Big(B_r(x)\cap \bigcup_{i,j} I_{i,j}\Big)
		\\& 
		=\sum_{ i<j} \int_{\partial B_r(x) \cap  I_{i,j}} \bigg(\Id-\tfrac{\nabla \chi_i}{|\nabla \chi_i|}\otimes \tfrac{\nabla \chi_i}{|\nabla \chi_i|} \bigg) : \tfrac{y-x}{|y-x|}\otimes \tfrac{y-x}{|y-x|}  ~~ \frac{1}{\big|\tfrac{\nabla \chi_i}{|\nabla \chi_i|}^\perp \cdot \tfrac{y-x}{|y-x|}\big|} \,d\mathcal{H}^0(y)
		\\&
		=\sum_{y\in \partial B_r(x) \cap \cup_{i,j} I_{i,j}} \cos \alpha_y,
	\end{align*}
	where $\alpha_y$ denotes the (smaller) angle between the normal $(y-x)/|y-x|$ of $\partial B_r(x)$ and $\cup_{i,j} I_{i,j}$ at $y$.
	This is~\eqref{RAverage}.
	
	By a slicing argument and~\eqref{RAverage} we have in the distributional sense
	\begin{align*}
		&\partial_r \bigg[\frac{1}{r} \mathcal{H}^1\Big(B_r(x)\cap \bigcup_{i,j} I_{i,j}\Big)\bigg]
		\\&
		=\frac{1}{r} \partial_r \int_0^r \sum_{1\leq i<j\leq P}  \int_{\partial B_{s}(x) \cap I_{i,j}} \frac{1}{\big|\tfrac{\nabla \chi_i}{|\nabla \chi_i|}^\perp\cdot \frac{y-x}{|y-x|}\big|} \,d\mathcal{H}^0 \,ds
		\\&~~~
		+\frac{1}{r} \partial_r\bigg[ \mathcal{H}^1\Big(B_r(x)\cap \bigcup_{i,j} I_{i,j}\cap \big\{y:\tfrac{\nabla \chi_i}{|\nabla \chi_i|}(y) \parallel \tfrac{y-x}{|y-x|} \big\}\Big)\bigg]
		\\&~~~
		-\frac{1}{r^2} \mathcal{H}^1\Big(B_r(x)\cap \bigcup_{i,j} I_{i,j}\Big)
		\\&
		= \sum_{y\in \partial B_r(x) \cap \cup_{i,j} I_{i,j}}\bigg(\frac{1}{\cos \alpha_y}-\cos \alpha_y\bigg)
		\\&~~~
		+ \frac{1}{r} \partial_r\bigg[ \mathcal{H}^1\Big(B_r(x)\cap \bigcup_{i,j} I_{i,j} \cap \big\{y:\tfrac{\nabla \chi_i}{|\nabla \chi_i|}(y) \parallel \tfrac{y-x}{|y-x|} \big\}\Big)\bigg].
	\end{align*}
	Finally, the monotonicity formula~\eqref{eq:monotonicityformula} follows since both terms on the right-hand side are non-negative. 
\end{proof}

Now we are in the position to prove the second main result of this paper.

\begin{proof}[Proof of Theorem~\ref{thm:stationary}]
The proof is carried out in three steps. 
First, we rule out junctions of very high order. 
Second, by the monotonicity formula, we go down to small scales and see that for any point, the number of intersections with sufficiently small circles around that points is finite and constant. 
Third, we use the stationarity condition to show that this number of intersections can actually only be $0$, $2$ or $3$ and that in the latter two cases, the intersection points lie at almost equal angles.

\emph{Step 1: Let $x\in D$. Then for all sufficiently small $r>0$ we have 
\[
\frac{1}{r} \mathcal{H}^1\big(B_r(x)\cap \mathcal{I}\big) \leq 7.
\]}

Suppose that we had $\tfrac{1}{r} \mathcal{H}^1(B_r(x)\cap \mathcal{I})>7$. 
Then we construct a competitor by filling in the ball $B_r(x)$ with one phase. 
More precisely, define $\tilde \chi_i$ by $\tilde \chi_i(y):=\chi_i(y)$ outside of $B_r$ and $\chi_i(y):=\delta_{im}$ in $B_r$ for some $m$ with $\supp \chi_m\cap B_r(x)\neq\emptyset$. 
Then we have\footnote{In fact, to guarantee this bound on the Hausdorff distance we would need to slightly modify the above construction and place an arbitrarily small volume of the phases $i$ with $\supp \chi_i \cap B_r(x)$ also in the $\tilde \chi_i$ in $B_r(x)$. For readability, we omit this technicality.} $d_H(\tilde\chi, \chi)\leq 2r$ and
\begin{align*}
\frac{1}{2} \sum_{i=1}^P |\nabla \tilde \chi_i|\big(\overline{ B_{r}(x)}\big)
\leq 2\pi r
\leq 7r-0.1r
\leq
\frac{1}{2} \sum_{i=1}^P |\nabla \chi_i|\big(\overline{ B_{r}(x)}\big) -0.1r,
\end{align*}
as well as
\begin{align*}
\frac{1}{2} \sum_{i=1}^P |\nabla \tilde \chi_i|\big(D\setminus \overline{ B_{r}(x)}\big)
=
\frac{1}{2} \sum_{i=1}^P |\nabla \chi_i|\big(D\setminus \overline{ B_{r}(x)}\big).
\end{align*}
For $r>0$ sufficiently small, this contradicts the stationarity condition \eqref{EqCondition}.

\emph{Step 2: Let $x\in D$. Then, for a.\,e.\ $r>0$ sufficiently small,  $\mathcal{I}$ intersects~$\partial B_r(x)$ in a fixed finite number~$k$ of points (with~$k$ being independent of~$r$), and must do so orthogonally.}

By the monotonicity formula~\eqref{eq:monotonicityformula}, the limit 
\[
\lim_{r\downarrow 0} \frac{1}{r} \mathcal{H}^1\big(B_r(x)\cap \mathcal{I}\big)
\] 
exists. Furthermore, this limit must be an integer and hence is attained for all $r>0$ small enough: Indeed, if $\tfrac{1}{r} \mathcal{H}^1\big(B_r(x)\cap \mathcal{I}\big)=a$ holds for some $a \geq0$ and some $r>0$, there exists a radius $s\in (0,r)$ such that $\mathcal{I}\cap B_{s}(x)$ consists of at most $\lfloor a \rfloor$ points. By \eqref{RAverage} and the estimate $|\cos \alpha_y|\leq 1$, we get $\tfrac{1}{s} \mathcal{H}^1\big(B_{s}(x)\cap \mathcal{I}\big)\leq \lfloor a \rfloor$ and thus by monotonicity $\tfrac{1}{s} \mathcal{H}^1\big(B_{s}(x)\cap \mathcal{I}\big)\leq \lfloor a \rfloor$ for all $s<r$. Due to the upper bound $\tfrac{1}{r} \mathcal{H}^1(B_r(x)\cap \mathcal{I}) \leq 7$, this shows that for all $r>0$ sufficiently small we have $\tfrac{1}{r} \mathcal{H}^1(B_r(x)\cap \mathcal{I})=k \in \{0,1,\ldots,7\}$, with $k$ being independent of $r$. 

Furthermore, for $r>0$ small enough the intersections with the circle $\partial B_r(x)$ must be orthogonal at $\mathcal{H}^1$-a.\,e.\ point of $\mathcal{I}$: Otherwise, the identity~\eqref{RAverage} would imply $\frac{1}{r} \mathcal{H}^1\big(B_r\cap \mathcal{I}\big)<k$, a contradiction.

\emph{Step 3: Let $x\in D$. Then for all $r>0$ sufficiently small, $\mathcal{I}$ intersects $\partial B_r(x)$ in either zero, two, or three points.
Furthermore, as $r\downarrow 0$, in case of two points the angle between them must converge to $180^\circ$, while in case of three points the angle between two of them must converge to $120^\circ$.}

Note that the case of one intersection point is excluded for a.\,e.\ $r>0$ small enough by topological considerations.

Suppose now that we had $k\in \{4,5,6,7\}$ points, or that one of the angles deviates from $180^\circ$ respectively $120^\circ$ by more than $\delta>0$. 
In this case, we can construct $\tilde \chi_i$ by setting $\tilde \chi_i(y):=\chi_i(y)$ outside of $B_r(x)$ and by constructing $\tilde \chi_i(y)$ in $B_r(x)$ using basic Steiner tree constructions. 

More precisely, in case of two boundary points which are not opposing, we simply connect them by a straight line segment.
Due to
\begin{align*}
	2r = \mathcal{H}^1\big(B_r(x)\cap \mathcal{I}\big) = \frac{1}{2} \sum_{i=1}^P |\nabla \chi_i|\big(\overline{B_{r}(x)}\big)
\end{align*}
we thus get
\begin{align*}
\frac{1}{2} \sum_{i=1}^P |\nabla \tilde \chi_i|\big(\overline{B_{r}(x)}\big)
\leq 2r - c\delta^2 r 
=
\frac{1}{2} \sum_{i=1}^P |\nabla \chi_i|\big(\overline{B_{r}(x)}\big)- c\delta^2 r,
\end{align*}
where the gain $\delta^2$ is due to the fact that opposite points on the circle are furthest apart.
Additionally, we have $d_H(\tilde \chi,\chi)\leq 2r$, so that that the permitted size of $\delta$ must converge to zero as $r\downarrow 0$ by the stationarity condition \eqref{EqCondition}.

In case of three boundary points which are not approximately equispaced at $120^\circ$, we choose two with the smallest angle between them and use the Steiner tree construction in Lemma~\ref{lemma:steinertreeconstrunction} to connect those two boundary points to the center with a fork. Then we connect the third remaining point with a straight line to the center.
The computation is then similar to the above case of two boundary points.

In case of $k\geq 4$ boundary points, two of those points must be neighbors with an angle $\alpha\leq 90^\circ$. 
We apply again Lemma~\ref{lemma:steinertreeconstrunction} to connect those two points to the center.
Then, as before, we connect all other points to the center by straight line segments to obtain
\begin{align*}
&\frac{1}{2} \sum_{i=1}^P |\nabla \tilde \chi_i|\big(\overline{B_{r}(x)}\big)
\leq (k- 2)r + (2-c)r
=
\frac{1}{2} \sum_{i=1}^P |\nabla \chi_i|\big(\overline{B_{r}(x)}\big)- c r.
\end{align*}
Again, it holds that\footnote{Again, to guarantee this bound we need to modify our construction and place an arbitrarily small amount of the phases in $B_r(x)$ that we eliminate from $B_r(x)$ in the construction into some place in $B_r(x)$. As the size of this phase is arbitrarily small, it does not affect our estimates.} $d_H(\tilde \chi,\chi)\leq 2r$.
Therefore, the stationarity condition \eqref{EqCondition} implies that $k\geq 4$ 
is excluded for small enough $r$.

\emph{Step 4: Conclusion.} 
By \emph{Step 2} and \emph{Step 3} the network of interfaces $\mathcal{I}$ consists of finitely many straight line segments that can only meet in triple junctions with exactly $120^\circ$.
\end{proof}

\section{Appendix}

\subsection{Sets of finite perimeter}
We collect some results from the theory of sets of finite perimeter.
We start with the isoperimetric inequality relative to an open set.

\begin{theorem}[Relative isoperimetric inequality, cf.\ \cite{AmbrosioFuscoPallara}]
\label{TheoremRelIsoperimetricInequ}
Let $\Omega\subset\Rd$ be an open set with compact Lipschitz boundary $\partial\Omega$.
Then there exist constants $\gamma=\gamma(\Omega)>0$ and $C=C(\Omega)>0$ such that
for all Borel sets $G\subset\Omega$ with $\mathcal{L}^d(G)\leq\gamma$ it holds
\begin{align}
\label{RelIsoperimetricInequ}
\mathcal{L}^d(G) \leq C\mathrm{Per}(G;\Omega)^\frac{d}{d-1}.
\end{align} 
\end{theorem}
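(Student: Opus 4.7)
The plan is to deduce the relative isoperimetric inequality \eqref{RelIsoperimetricInequ} from the Sobolev--Poincaré inequality in $\mathrm{BV}(\Omega)$ applied to $u = \chi_G$, and then to exploit that for $|G|$ small the mean $\bar u := \tfrac{1}{|\Omega|}\int_\Omega u \dx$ is close to zero so the oscillation $\|u - \bar u\|_{L^{d/(d-1)}(\Omega)}$ still controls $|G|^{(d-1)/d}$ up to a universal factor.

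First I would invoke the BV Sobolev--Poincaré inequality on $\Omega$: since $\Omega$ is a bounded open set with compact Lipschitz boundary, there exists a constant $C_1 = C_1(\Omega) > 0$ such that every $u \in \mathrm{BV}(\Omega)$ satisfies
\begin{align*}
\|u - \bar u\|_{L^{d/(d-1)}(\Omega)} \leq C_1 \, |Du|(\Omega).
\end{align*}
This is classical (see, e.g., \cite[Thm.~3.44]{AmbrosioFuscoPallara}); its proof extends $u$ to a BV function on $\Rd$ via a Lipschitz boundary chart and then applies the classical $W^{1,1}$ Sobolev inequality after mollification.

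Second, specializing to $u := \chi_G$, one has $|Du|(\Omega) = \mathrm{Per}(G;\Omega)$ and $\bar u = |G|/|\Omega|$, and a direct computation gives
\begin{align*}
\|u - \bar u\|_{L^{d/(d-1)}(\Omega)}^{d/(d-1)}
= |G|\Bigl(1 - \tfrac{|G|}{|\Omega|}\Bigr)^{d/(d-1)} + (|\Omega| - |G|)\Bigl(\tfrac{|G|}{|\Omega|}\Bigr)^{d/(d-1)}
\geq |G|\Bigl(1 - \tfrac{|G|}{|\Omega|}\Bigr)^{d/(d-1)}.
\end{align*}
Choosing the smallness threshold $\gamma := |\Omega|\bigl(1 - 2^{-(d-1)/d}\bigr)$ ensures $(1 - |G|/|\Omega|)^{d/(d-1)} \geq \tfrac{1}{2}$ whenever $|G| \leq \gamma$, so the left-hand side is bounded below by $|G|/2$. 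Chaining this with the Sobolev--Poincaré estimate and raising to the power $d/(d-1)$ yields
\begin{align*}
|G| \leq 2\,C_1^{d/(d-1)} \, \mathrm{Per}(G;\Omega)^{d/(d-1)},
\end{align*}
i.e., \eqref{RelIsoperimetricInequ} with the explicit constant $C := 2\,C_1^{d/(d-1)}$.

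The main (and in fact only substantial) obstacle is the BV Sobolev--Poincaré inequality itself, whose proof on a general open set requires a bounded extension operator $\mathrm{BV}(\Omega) \to \mathrm{BV}(\Rd)$; this is exactly where the compactness and Lipschitz regularity of $\partial\Omega$ enter decisively. Once this extension step is granted, the rest of the argument is elementary arithmetic, and the dimension-free truncation trick (replacing $u$ by $u \wedge \tfrac{1}{2}$ or $u \vee \tfrac{1}{2}$) would provide an alternative route that avoids the mean-value cancellation above entirely.
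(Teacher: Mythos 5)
The paper does not actually prove this statement -- it is quoted in the appendix as a classical result with a citation to Ambrosio--Fusco--Pallara -- so your proposal can only be compared with the standard textbook route, which it essentially reproduces: BV Sobolev--Poincar\'e inequality applied to $u=\chi_G$, plus the elementary observation that for $\mathcal{L}^d(G)\leq\gamma$ the mean $\bar u$ is small enough that $\|u-\bar u\|_{L^{d/(d-1)}}^{d/(d-1)}\geq \tfrac12\mathcal{L}^d(G)$. That computation, the choice $\gamma=|\Omega|\bigl(1-2^{-(d-1)/d}\bigr)$, and the resulting constant $C=2C_1^{d/(d-1)}$ are all correct.

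There is, however, a mismatch between the hypotheses you use and those in the statement, and it is not purely cosmetic. First, the Poincar\'e inequality with a global mean requires $\Omega$ to be \emph{connected}; for disconnected $\Omega$ it fails outright (take $u$ locally constant with different values on two components, so $|Du|(\Omega)=0$ while $u-\bar u\not\equiv 0$). The theorem as stated only assumes a compact Lipschitz boundary, and in the paper's application the set $U_i^{\mathrm{bulk}}=D\setminus U^{\mathcal{I}_i}_{(\bar r/2,\delta)}$ may well be disconnected. The fix is easy and you should record it: a compact Lipschitz boundary forces finitely many connected components, so apply your argument on each component $\Omega_k$ with constants $\gamma_k,C_k$, set $\gamma:=\min_k\gamma_k$, $C:=\max_k C_k$, and conclude via $\sum_k a_k^{d/(d-1)}\leq\bigl(\sum_k a_k\bigr)^{d/(d-1)}$ applied to $a_k=\mathrm{Per}(G;\Omega_k)$. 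Second, you silently assume $\Omega$ bounded (you need $|\Omega|<\infty$ to form $\bar u$), whereas ``compact Lipschitz boundary'' also allows exterior domains; for the paper's purposes this is harmless since there $\Omega\subset D$ is bounded, but if you want the statement in the generality written you would have to treat the unbounded case separately (e.g.\ classical isoperimetric inequality away from the boundary combined with a local estimate near $\partial\Omega$), since the mean-normalization argument has no meaning when $|\Omega|=\infty$. With these two caveats addressed, your proof is complete.
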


The next result we want to mention concerns the one-dimensional restrictions of sets of finite
perimeter.

\begin{theorem}[One-dimensional restrictions of sets of finite perimeter, cf.\ \cite{AmbrosioFuscoPallara}]
\label{OneDimRestrictions}
Consider a set $G$ of finite perimeter in $\Rd$, denote by 
$\vec{\nu}^{\,G}=(\nu^G_{x_1},\ldots,\nu^G_{x_{d-1}},\nu^G_{y})\in\Rd$
the associated measure theoretic inner unit normal vector field of 
the reduced boundary $\partial^*G$, and let $\chi^*_G$ be the
precise representative of the bounded variation function $\chi_G$. 
Then for Lebesgue almost every $x\in\Rd[d-1]$ the one-dimensional sections 
$G_{x}:=\{y\in\Rd[]\colon (x,y)\in G\}$ satisfy the following properties:
\begin{itemize}\itemsep3pt
	\item[i)] $G_{x}$ is a set of finite perimeter in $\Rd[]$, 
						$\chi_G(x,\cdot)=\chi_G^*(x,\cdot)$ Lebesgue almost everywhere in $G_x$,
	\item[ii)] $(\partial^*G)_{x}=\partial^*G_{x}$, 
	\item[iii)] $\nu_y^G(x,y_0)\neq 0$ for all $y_0\in\Rd[]$ such that $(x,y_0)\in\partial^*G$, and
	\item[iv)] $\lim_{y\downarrow y_0}\chi^*_G(x,y)=1$ and $\lim_{y\uparrow y_0}\chi^*_G(x,y)=0$
							if $\nu^G_y(x,y_0)>0$, and vice versa if $\nu^G_y(x,y_0)<0$.
\end{itemize}
\end{theorem}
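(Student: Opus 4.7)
The plan is to deduce all four properties from the slicing theory for $BV$ functions (Vol'pert's theorem) applied to $\chi_G\in BV(\mathbb{R}^d)$, combined with the De~Giorgi structure theorem. Decompose points as $(x,y)\in\mathbb{R}^{d-1}\times\mathbb{R}$ and consider the scalar component $D_y\chi_G$ of the vector measure $D\chi_G = -\nu^G\,\mathcal{H}^{d-1}\mres\partial^*G$.

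\textbf{Property (i).} A Fubini/slicing argument applied to the finite measure $|D\chi_G|$ shows that for $\mathcal{L}^{d-1}$-a.e.\ $x$ the one-variable function $y\mapsto \chi_G(x,y)$ agrees Lebesgue-a.e.\ with a function $u_x\in BV(\mathbb{R})$ whose distributional derivative equals the $x$-slice of $D_y\chi_G$. Since $\chi_G$ takes only the values $0$ and $1$, we must have $u_x=\chi_{G_x}$ a.e., so $G_x$ has finite perimeter in $\mathbb{R}$. Moreover, precise representatives of $BV$ functions on $\mathbb{R}$ coincide a.e.\ with the function itself (they differ only on a set of jump points, which is Lebesgue-null), giving $\chi_G(x,\cdot)=\chi_G^*(x,\cdot)$ Lebesgue-a.e.

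\textbf{Properties (ii) and (iii).} Apply the area formula to the projection $\pi\colon(x,y)\mapsto x$ restricted to the rectifiable set $\partial^*G$; its tangential Jacobian equals $\sqrt{1-(\nu_y^G)^2}$. Combine this with Vol'pert's mass identity
\begin{equation*}
\int_{\mathbb{R}^{d-1}} |D\chi_{G_x}|(\mathbb{R})\,dx \;=\; |D_y\chi_G|(\mathbb{R}^d) \;=\; \int_{\partial^*G} |\nu_y^G|\,d\mathcal{H}^{d-1}.
\end{equation*}
The inclusion $\partial^*G_x\subset(\partial^*G)_x$ for a.e.\ slice follows because every atom of $D\chi_{G_x}$ must arise as a slice of $D_y\chi_G$. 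For the reverse inclusion, at any $(x,y_0)\in\partial^*G$ with $\nu_y^G(x,y_0)\neq 0$ the De~Giorgi blow-up is a halfspace whose vertical slice is a halfline, so $y_0$ is a density-$1/2$ point of $G_x$, i.e.\ $y_0\in\partial^*G_x$. The displayed mass identity together with the area formula then leaves no slack: if, on a non-negligible set of $x$'s, some point $(x,y_0)\in\partial^*G$ satisfied $\nu_y^G=0$, the integrand on the left would strictly exceed the integrand on the right, a contradiction. This simultaneously yields (iii) and completes (ii).

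\textbf{Property (iv) and the main obstacle.} On $\mathbb{R}$, the precise representative of any $BV$ function admits one-sided limits at every point; at each jump point $y_0\in\partial^*G_x$ these are $0$ and $1$ in some order, determined by the sign of the atom of $D\chi_{G_x}$ at $y_0$. By the slicing identification carried out in (ii)--(iii), this atom equals $-\nu_y^G(x,y_0)\,\delta_{y_0}$, so a positive $\nu_y^G$ corresponds to a jump from $0$ below to $1$ above and vice versa. The main obstacle is the nondegeneracy in (iii): controlling the critical set $\{\nu_y^G=0\}\cap\partial^*G$ of the projection $\pi$, so that its projection to $\mathbb{R}^{d-1}$ is $\mathcal{L}^{d-1}$-negligible. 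This is the heart of Vol'pert's theorem and follows from the rectifiability of $\partial^*G$ via a Sard-type argument on the countably many Lipschitz pieces covering $\partial^*G$, which ensures that the critical values of each piece project into an $\mathcal{L}^{d-1}$-null set.
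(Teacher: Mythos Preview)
The paper does not give its own proof of this statement: it is stated in the appendix as a known result with a citation to Ambrosio--Fusco--Pallara. So there is nothing to compare your argument against in the paper itself; your sketch follows the standard Vol'pert-type strategy used in that reference, and the overall architecture (slicing of $BV$, De~Giorgi blow-up, Sard-type control of the critical set of the projection) is correct.

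That said, there is a concrete slip. The tangential Jacobian of the projection $\pi\colon\partial^*G\to\mathbb{R}^{d-1}$ is $|\nu_y^G|$, not $\sqrt{1-(\nu_y^G)^2}$: a hyperplane with unit normal $\nu$ projects onto the horizontal coordinate hyperplane with area factor $|\nu\cdot e_d|=|\nu_y|$. With the wrong Jacobian your area formula would not match the Vol'pert mass identity $\int|D\chi_{G_x}|\,dx=\int_{\partial^*G}|\nu_y^G|\,d\mathcal{H}^{d-1}$, so the ``no slack'' comparison you invoke would not close. With the correct Jacobian the two expressions do align.

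Also, the mass-identity argument you give for (iii) is not quite self-contained. The identity only tells you that $\#(\partial^*G_x)=\#\big((\partial^*G)_x\cap\{\nu_y^G\neq 0\}\big)$ for a.e.\ $x$; it is blind to points of $(\partial^*G)_x$ with $\nu_y^G=0$, so no contradiction arises from their presence. To conclude that such points are absent from almost every slice you genuinely need the Sard-type statement you mention at the end --- that $\pi\big(\partial^*G\cap\{\nu_y^G=0\}\big)$ is $\mathcal{L}^{d-1}$-null, proved by covering $\partial^*G$ with countably many $C^1$ graphs and applying Sard on each. You identify this correctly as the crux, but it should be invoked as the \emph{reason} for (iii), not as a fallback after the mass comparison.
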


Finally, we rely on the following structure and regularity result for the reduced boundary of planar sets 
of finite perimeter. To formulate the statement, a set $\mathcal{C}\subset\Rd[2]$ is called a rectifiable
Jordan curve if $\mathcal{C}$ is the image of a continuous and bijective map from the unit sphere $\mathbb{S}^1$
and $\mathcal{H}^1(\mathcal{C})<\infty$. It is a fact that any rectifiable Jordan curve admits a Lipschitz
reparametrization (cf.\ \cite{Ambrosio2001}). Moreover, by the Jordan 
curve theorem $\Rd[2]\setminus\mathcal{C}$ 
has exactly two connected components one of which is bounded. The bounded  component 
will be denoted by $\mathrm{int}(\mathcal{C})$.  

\begin{theorem}[Structure of planar sets of finite perimeter, cf.\ \cite{Ambrosio2001}]
\label{BdryLipschitzCurveDecomp}
Let $G\subset\Rd[2]$ be a set of finite perimeter with positive and bounded Lebesgue measure.
Then there exists a unique decomposition of the reduced boundary of $G$ into
rectifiable Jordan curves $(\mathcal{C}^\pm_m)_{m\in\mathbb{N}}$ with the following properties:
\begin{itemize}
\item[i)] $\mathrm{Per}(G)=\sum_{m}\mathcal{H}^1(\mathcal{C}^+_m) + \sum_{m}\mathcal{H}^1(\mathcal{C}^-_m)$.
\item[ii)] For each $m\neq m''$ the sets $\mathrm{int}(\mathcal{C}^+_{m})$ 
and $\mathrm{int}(\mathcal{C}^+_{m''})$
           are either disjoint or one is a subset of the other. 
					In the second case, there exists $m'\in\mathbb{N}$
					 such that $\mathrm{int}(\mathcal{C}^-_{m'})$ lies in between the two. 
					An analogous property holds for the family
					 of sets $(\mathrm{int}(\mathcal{C}^-_m))_{m\in\mathbb{N}}$. Moreover, 
					for each $m'\in\mathbb{N}$ there 
					 exists $m\in\mathbb{N}$ such that $\mathrm{int}(\mathcal{C}^-_{m'})$ 
					is a subset of $\mathrm{int}(\mathcal{C}^+_m)$. 
\item[iii)] Define $L_{m}:=\{m'\in\mathbb{N}\colon\mathrm{int}(\mathcal{C}^-_{m'})
\subset\mathrm{int}(\mathcal{C}^+_m)\}$
            as well as $G_m:=\mathrm{int}(\mathcal{C}^+_m)\setminus\bigcup_{m'\in L_m} \mathrm{int}(\mathcal{C}^-_{m'})$.
						The sets $G_m$ then form a family of pairwise disjoint, indecomposable 
						sets of finite perimeter such that
						$G=\bigcup_{m\in\mathbb{N}}G_m$.
\end{itemize}
\end{theorem}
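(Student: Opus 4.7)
The plan is to reduce the general case to that of an indecomposable set via the usual $M$-decomposition, and then analyse the boundary of such a set through the Jordan curve theorem combined with an approximation-compactness argument. A classical measure-theoretic result (Dolzmann--M\"uller, and independently De Giorgi), valid in every dimension, produces a unique decomposition modulo Lebesgue-negligible modifications $G=\bigsqcup_m G_m$ with $\mathrm{Per}(G)=\sum_m \mathrm{Per}(G_m)$, in which each $G_m$ is indecomposable in the sense that it cannot be split into two sets of finite perimeter whose perimeters sum to $\mathrm{Per}(G_m)$. Once this is in place, assertions (i) and (iii) follow by concatenating the individual Jordan-curve decompositions of the $G_m$, and the bulk of the argument reduces to proving the theorem under the additional assumption that $G$ itself is indecomposable.

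The key planar step is to show that, for an indecomposable $G\subset \mathbb{R}^2$ of bounded Lebesgue measure, the reduced boundary $\partial^\ast G$ agrees, up to $\mathcal{H}^1$-null sets, with the disjoint union of one outer rectifiable Jordan curve $\mathcal{C}^+$ and at most countably many inner rectifiable Jordan curves $\mathcal{C}^-_k$. I would realise this by smooth approximation: mollify $\chi_G$ to obtain $u_\varepsilon\in C^\infty_c(\mathbb{R}^2)$, and via Sard's theorem select regular values $t_\varepsilon\to \tfrac12$ such that $G_\varepsilon:=\{u_\varepsilon>t_\varepsilon\}$ is an open smooth set bounded by finitely many disjoint smooth Jordan curves with $\mathrm{Per}(G_\varepsilon)\to \mathrm{Per}(G)$. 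The isoperimetric inequality gives a uniform bound on the number of boundary components of length above any prescribed threshold; Arzel\`a--Ascoli applied to arclength parametrisations extracts $C^0$-limits, and Golab's lower semicontinuity theorem for $\mathcal{H}^1$ on connected compacta ensures no arclength is lost in the limit. A diagonal extraction, combined with indecomposability of $G$ (which excludes figure-eight degenerations), identifies the limits as rectifiable Jordan curves carrying $\partial^\ast G$ with exactly one of them bounding the outer component of $G$.

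The sign labelling and the nesting structure (ii) follow by pure topology once the curves are in hand. Two distinct Jordan curves inside $\partial^\ast G$ cannot cross transversally, because the measure-theoretic inner normal $\nu^G$ is single-valued $\mathcal{H}^1$-a.e.\ on the reduced boundary. The Jordan curve theorem then forces their bounded complementary components to be either disjoint or nested. The orientation inherited from $\nu^G$ is constant along each curve and determines its sign ($+$ if its interior lies on the $G$-side, $-$ otherwise), while the additivity of the perimeter rules out two nested $+$-curves of the same indecomposable component without an intermediate $-$-curve. Uniqueness of the decomposition follows because the $\mathcal{C}^\pm_m$ coincide with the boundaries of the connected components of the precise interior and exterior of $G$, which depend only on the Lebesgue-equivalence class of $G$.

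The main obstacle is the passage from the smooth approximants $G_\varepsilon$ to the limiting Jordan-curve decomposition. A priori the boundary components of $G_\varepsilon$ may pinch, split, or converge to loops with genuine self-intersections or Cantor-like skeleta carrying no additional arclength, and showing that the limit curves remain injective and collectively saturate the $\mathcal{H}^1$-mass of $\partial^\ast G$ is delicate. The standard remedy, carried out in Ambrosio--Caselles--Masnou--Morel~\cite{Ambrosio2001}, is Golab's semicontinuity theorem combined with a careful diagonal procedure controlling where arclength concentrates; it is precisely at this point that the restriction to dimension two enters decisively, since the Jordan curve theorem and the coarea structure of smooth planar level sets have no direct higher-dimensional analogue.
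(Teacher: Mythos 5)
This statement is quoted in the paper as background material from the literature: the appendix states it with the citation \cite{Ambrosio2001} (Ambrosio--Caselles--Masnou--Morel) and gives no proof at all, so there is no in-paper argument to compare yours against. Judged on its own terms, your outline correctly reproduces the broad architecture of the standard proof: first the decomposition of $G$ into indecomposable components with additivity of the perimeter, then the planar statement that the essential boundary of an indecomposable bounded set is covered by one outer and countably many inner rectifiable Jordan curves, and finally the nesting/sign bookkeeping of part (ii) by topology. To that extent the proposal is a faithful reconstruction of the strategy of the cited reference, and you are candid that the hard step is ``carried out in Ambrosio--Caselles--Masnou--Morel''.

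However, as an independent proof the mechanism you propose for the key planar step is not the one in \cite{Ambrosio2001} and, as written, has genuine gaps. The actual argument there does not go through smooth approximation of level sets; it works with simple sets, holes and saturations (equivalently, with the decomposition of the boundary, viewed as an integral $1$-current or via one-dimensional slicing, into closed Lipschitz cycles), and the injectivity of the resulting curves is obtained by a separate cut-and-paste argument exploiting indecomposability. Your route via mollification, Arzel\`a--Ascoli and Golab's theorem only yields \emph{lower} semicontinuity of $\mathcal{H}^1$ along the limit, so it neither prevents loss or concentration of length nor guarantees that the limit curves cover $\partial^\ast G$ with multiplicity one, which is exactly what item (i) requires; the assertion that indecomposability of $G$ ``excludes figure-eight degenerations'' of the approximating boundary curves is stated but not argued, and it is precisely the delicate point (uniform limits of embedded curves need not be injective). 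Likewise, the claim that two curves in $\partial^\ast G$ cannot cross because the measure-theoretic normal is single-valued $\mathcal{H}^1$-a.e.\ does not rule out a transversal crossing at a single point, which is an $\mathcal{H}^1$-null set; the nesting in (ii) needs the finer analysis of holes from the reference. So your proposal is an acceptable summary of why the theorem is true and where it is proved, but it should not be mistaken for a self-contained alternative proof.
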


\subsection{Steiner tree construction}
For the convenience of the reader, we state a simple but crucial tool in constructing competitors for the interface length functional via Steiner trees.

\begin{lemma}[{Length of Steiner tree, cf.~\cite[Lemma 5.20]{DavidLeger}}]\label{lemma:steinertreeconstrunction}
	For an arc $I \subset \partial B_1(0)$ of length $\alpha = \mathcal{H}^1(I) \leq \frac23 \pi$, let $Y(I)=[0,\xi]\cup[\xi,a]\cup[\xi,b]$ denote the fork connecting the center $0$ of the ball to the two endpoints $a$ and $b$ of the arc $I$, in which the branch point $\xi$ is chosen such that the three segments form equal angles of $120^\circ$. 
	Then
	\begin{align}
	\mathcal{H}^1(Y(I))=f(\alpha),
	\end{align}
	where $f(\alpha)=2\sin(\tfrac\alpha2+\tfrac\pi6)$ for $0<\alpha<\frac23 \pi$.
\end{lemma}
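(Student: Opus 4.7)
The plan is a direct trigonometric computation exploiting the reflection symmetry of the configuration. First I would place the figure in coordinates: after a rotation, write the two endpoints of the arc as $a = (\cos\tfrac{\alpha}{2}, \sin\tfrac{\alpha}{2})$ and $b = (\cos\tfrac{\alpha}{2}, -\sin\tfrac{\alpha}{2})$, with the center $0$ at the origin. Since the three conditions defining $\xi$ (equal $120^\circ$ angles) are invariant under reflection in the $x$-axis, the Steiner point must lie on the $x$-axis, i.e.\ $\xi = (x,0)$ for some $x \in (0,1)$.

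Next I would translate the $120^\circ$ condition into an equation for $x$. The segment from $\xi$ to $0$ points in the direction $(-1,0)$, so the requirement that the segment from $\xi$ to $a$ makes an angle of exactly $120^\circ$ with this direction forces that segment to make an angle of $60^\circ$ with the positive $x$-axis, whence
\begin{align*}
\tan(60^\circ) \;=\; \sqrt{3} \;=\; \frac{\sin(\alpha/2)}{\cos(\alpha/2) - x},
\end{align*}
giving $x = \cos(\alpha/2) - \tfrac{1}{\sqrt{3}} \sin(\alpha/2)$. The hypothesis $\alpha < \tfrac{2\pi}{3}$ ensures $x > 0$, so the fork is genuinely a tree rather than degenerating (this is where the range of $\alpha$ is used).

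Then I would read off the three lengths: $|\xi - 0| = \cos(\alpha/2) - \tfrac{1}{\sqrt 3}\sin(\alpha/2)$, and
\begin{align*}
|\xi - a| \;=\; |\xi - b| \;=\; \sqrt{\tfrac{1}{3}\sin^2(\alpha/2) + \sin^2(\alpha/2)} \;=\; \tfrac{2}{\sqrt{3}} \sin(\alpha/2).
\end{align*}
Summing yields $\mathcal{H}^1(Y(I)) = \cos(\alpha/2) + \sqrt{3}\,\sin(\alpha/2)$. Finally, factoring out $2$ and recognizing $\tfrac{1}{2} = \sin(\pi/6)$ and $\tfrac{\sqrt{3}}{2} = \cos(\pi/6)$, the sine addition formula converts this to $2\sin(\alpha/2 + \pi/6) = f(\alpha)$, as claimed. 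There is no real obstacle here; the only point that needs care is verifying $x > 0$ (so that $\xi$ indeed lies strictly between $0$ and the chord $[a,b]$) under the assumed bound $\alpha < \tfrac{2\pi}{3}$, which corresponds precisely to the threshold beyond which the Steiner fork would collapse to the V-shape $[0,a]\cup[0,b]$.
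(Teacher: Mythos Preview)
Your argument is correct: the symmetry reduction, the determination of $x$ from the $120^\circ$ condition, the check $x>0$ under $\alpha<\tfrac{2\pi}{3}$, and the final trigonometric simplification all go through exactly as you describe. The paper itself does not give a proof of this lemma but merely quotes the result from \cite{DavidLeger}, so your explicit computation actually supplies what the paper omits.
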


\section*{Acknowledgments}
This project has received funding from the European Union's Horizon 2020 research and 
innovation programme under the Marie Sk\l{}odowska-Curie Grant Agreement No.\ 665385 
\begin{tabular}{@{}c@{}}\includegraphics[width=3ex]{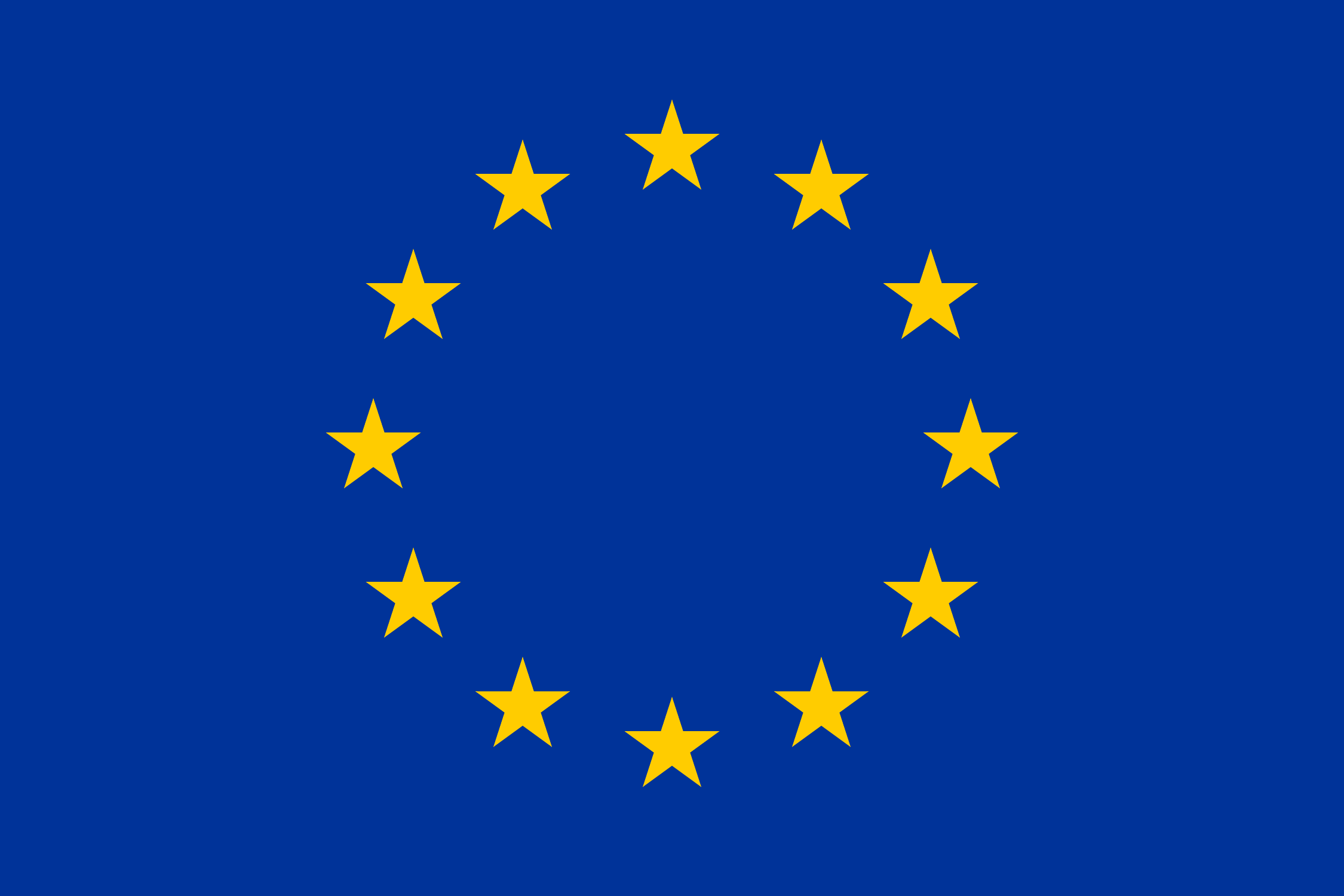}\end{tabular}, from the
European Research Council (ERC) under the European Union's Horizon 2020
research and innovation programme (grant agreement No 948819)
\smash{
\begin{tabular}{@{}c@{}}\includegraphics[width=6ex]{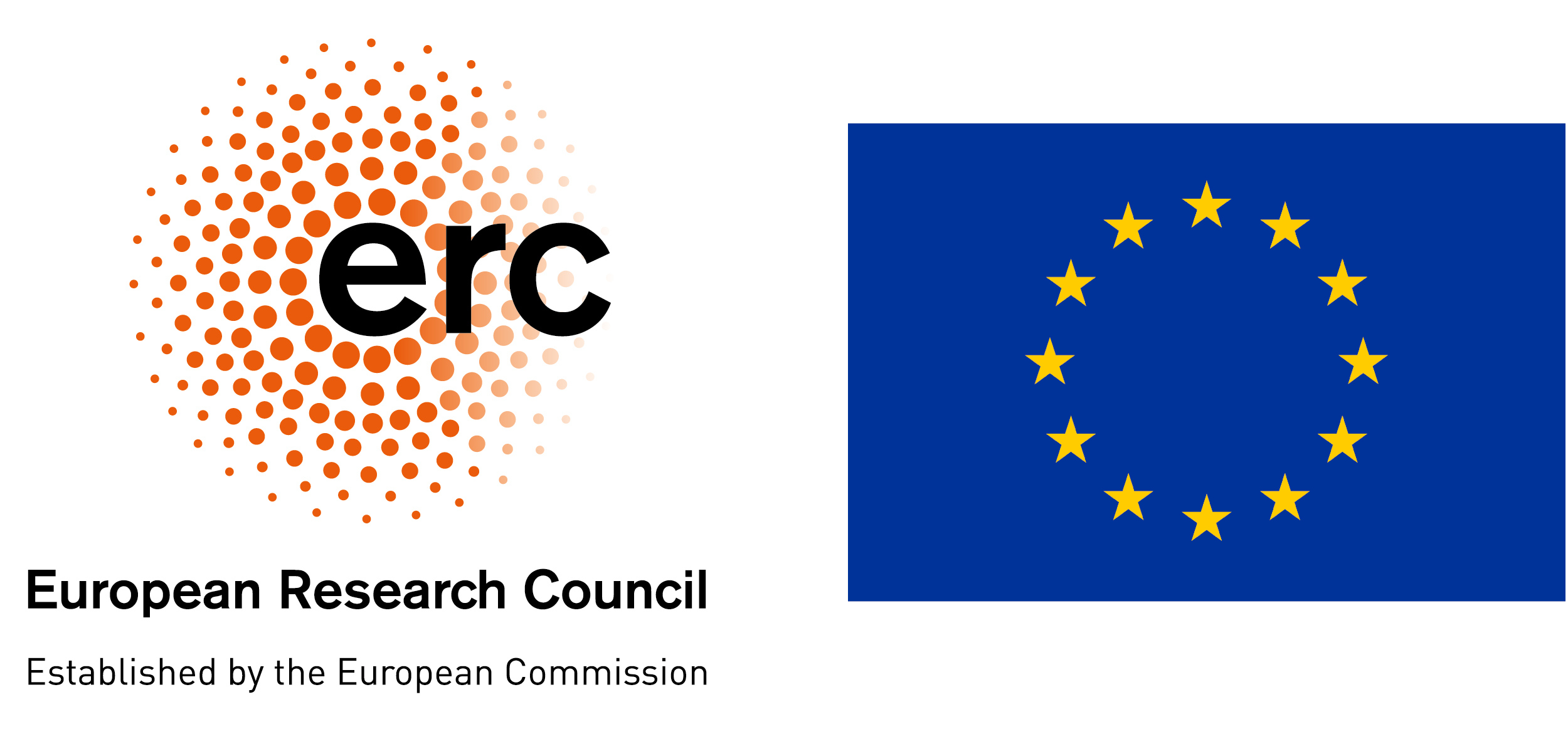}\end{tabular}
},
and from the Deutsche Forschungsgemeinschaft (DFG, German Research Foundation) 
under Germany's Excellence Strategy -- EXC-2047/1 -- 390685813 and EXC 2044 --390685587, Mathematics M{\"u}nster: Dynamics--Geometry--Structure.

\bibliographystyle{abbrv}
\bibliography{locminarea}

\end{document}